\documentclass[11pt]{article}
\usepackage{amsfonts}
\usepackage{color}
\usepackage{amsmath,amssymb,comment}
\newtheorem{theo}{Theorem}[section]
\newtheorem{lem}[theo]{Lemma}
\newtheorem{cor}[theo]{Corollary}

\newcommand{\mysection}[1]{\section{#1} \setcounter{equation}{0}}
\newcommand{\proof}{{\sc Proof.} \quad}
\newcommand{\proofc}{{\sc Proof} \ }
\newcommand{\be}{\begin{equation} \label}
\newcommand{\ee}{\end{equation}}
\newcommand{\bea}{\begin{eqnarray}\label}
\newcommand{\eea}{\end{eqnarray}}
\newcommand{\bas}{\begin{eqnarray*}}
\newcommand{\eas}{\end{eqnarray*}}
\newcommand{\bit}{\begin{itemize}}
\newcommand{\eit}{\end{itemize}}
\newcommand{\qed}{\hfill$\Box$ \vskip.2cm}
\newcommand{\nn}{\nonumber}
\newcommand{\R}{\mathbb{R}}
\newcommand{\N}{\mathbb{N}}
\newcommand{\pO}{\partial\Omega}

\newcommand{\eps}{\varepsilon}

\newcommand{\hra}{\hookrightarrow}
\newcommand{\io}{\int_\Omega}
\newcommand{\na}{\nabla}
\newcommand{\Del}{\Delta}
\newcommand{\del}{\delta}

\newcommand{\vt}{\vartheta}
\newcommand{\lam}{\lambda}

\newcommand{\sig}{\sigma}
\newcommand{\pa}{\partial}
\newcommand{\bom}{\overline{\Omega}}
\newcommand{\Om}{\Omega}

\newcommand{\ov}{\overline}

\newcommand{\wh}{\widehat}

\newcommand{\hs}{\hspace*}

\newcommand{\vp}{\varphi}
\newcommand{\lbal}{\left\{ \begin{array}{l}}
\newcommand{\lball}{\left\{ \begin{array}{ll}}
\newcommand{\ear}{\end{array} \right.}

\newcommand{\cb}{\color{blue}}

\newcommand{\abs}{\\[5pt]}

\newcommand{\adb}{\allowdisplaybreaks}
\newcommand{\tm}{T_{max}}

\newcommand{\F}{{\mathcal{F}}}

\newcommand{\ovv}{\ov{v}}

\begin{document}
\adb
\title{Global smooth behavior in Kuznetsov and Westervelt type\\
viscous wave equations:\\
A unifying approach covering $W^{1,q}$-small initial data}
\author{
Tahir Boudjeriou\footnote{t.boudjeriou@univ-boumerdes.dz}\\
{\small Institute of Electrical \& Electronic Engineering}\\
{\small University of Boumerdes,  Boumerdes, 35000, Algeria}
\and
Michael Winkler\footnote{michael.winkler@math.uni-paderborn.de}\\
{\small Universit\"at Paderborn, Institut f\"ur Mathematik}\\
{\small 33098 Paderborn, Germany}}
\date{}
\maketitle
\begin{abstract}
\noindent 
In a smoothly bounded domain $\Om\subset\R^n$  with $n\geq 1$ and $a>0$, 
we consider an initial-boundary value problem for the general viscous wave equation
\bas
	h(u,u_t) u_{tt} = \Del u_t + a\Del u + f(u,u_t,\na u,\na u_t)
\eas
which appears in models of nonlinear acoustics wave propagation;
well-established equations of Kuznetsov and Westervelt type form particular examples.\abs
While the existing literature offers extensive results on global solutions for sufficiently small initial data $(u_{0}, u_{0t})=(u, u_{t})|_{t=0}$ in second- and higher-order Sobolev spaces it appears to remain open how far global solvability can be established under smallness conditions involving only first-order Sobolev spaces. The present manuscript addresses this question by proving the existence of global classical solutions together with exponential decay of the pair $(u,u_{t})$ in $ W^{1,r}\times W^{1,p}$-Sobolev spaces whenever the nonlinearities $h$ and $f$ are sufficiently smooth and are such that $h(0,0)>0$ as well as
$f(0,0,0,0)=0$ and $\na f(0,0,0,0)=0$.\abs
\noindent {\bf Key words:} nonlinear acoustics; global existence; exponential decay; $W^{1,p}$ energy analysis\\
{\bf MSC 2020:} 35G31, 35B45, 35B40, 76Q05
\end{abstract}
%
%
%
%
%
\newpage
\section{Introduction}\label{intro}
The propagation of nonlinear waves in acoustic media plays a key role in numerous applications, inter alia in ultrasound technology, thermotherapy, ultrasound cleaning, welding, and sonochemistry (\cite{Cri}, \cite{hamilton_blackstock}, \cite{Kal1}, \cite{Abr}).
In dependence on respectively predominant mechanisms, various approximations of the full compressible Navier-Stokes equations
are used as mathematical descriptions of such processes.
Among the most established scalar models which are accordingly used in nonlinear acoustics are the Kuznetsov equation
\be{K}
	\psi_{tt} = c^2 \Del \psi + d \Del \psi_t + \pa_t \Big( |\na\psi|^2 + \frac{\beta}{2c^2} \psi_t^2 \Big)
\ee
for the velocity potential $\psi$ of an approximately irrotational flow, and the Westervelt equation
\be{W}
	p_{tt} = c^2 \Del p + d\Del p_t + \frac{\beta}{\rho_0 c^2} (p^2)_{tt}
\ee
for the pressure $p=-\rho_0 \psi_t$, where $c$, $d$, $\rho_0$ and $\beta$ denote the small-signal speed of sound, 
the sound diffusivity, the equilibrium density and a dimensionless material parameter, respectively.
For details on aspects of corresponding reductions, as well as on further weakly nonlinear acoustic models, we may refer to
\cite{hamilton_blackstock}, \cite{Kuz}, \cite{westervelt}, \cite{Dek3} and \cite{jordan2016}, for instance.\abs
From a viewpoint of mathematical analysis, rigorously justifying validity of such simplifications of considerably more complex
original models requires, as a fundamental necessary prerequisite, an {\em a posteriori} verification of hypotheses 
on smallness of deviations from equilibrium that have formed the basis for their derivation. 
When posed under homogeneous Dirichlet boundary conditions in a bounded domain $\Om\subset\R^n$ with $n\le 3$, 
for instance, the Westervelt equation (\ref{W})
has the property that if the second-order expression 
\be{01}
	\|p\|_{W^{2,2}(\Om)} + \|p_t\|_{W^{1,2}(\Om)} + \|p_{tt}\|_{L^2(\Om)}
\ee
is suitably small at the initial
time, then a global solution exists which, up to multiplication of constants, inherits this smallness feature and, in fact,
even decays with respect to the norm in $W^{2,2}(\Om)$ in the large time limit (\cite{Kal2});
in \cite{Mey}, it has later on been found that whenever $n\ge 1$
and $p>\max\{\frac{n}{2},\frac{n+4}{4}\}$ is such that $p\ne\frac{3}{2}$, 
initial smallness of 
\be{02}
	\|p\|_{W^{2,p}(\Om)} + \|p_t\|_{W^{2-\frac{2}{p},p}(\Om)}
\ee
is sufficient to similarly ensure global solvability and decay.
For the more strongly nonlinear Kuznetsov equation (\ref{K}), a result from \cite{mizohata_ukai} asserts global existence 
of solutions with trivial large time asymptotics if $n\le 3$ and
\be{03}
	\|\psi\|_{W^{3,2}(\Om)} + \|\psi\|_{W^{2,2}(\Om)}
\ee
is sufficiently small at the initial instant; in fact, the same conclusion has later on been seen to hold even for the 
so-called Blackstock equation in which an additional second-order nonlinearity of the form $K\psi_t \Del\psi$ appears 
on the corresponding right-hand side (\cite{fritz_nikolic_wohlmuth2018}). 
Significant improvements with respect to the differentiability order have been achieved in two more recent works, where
a condition on initial smallness of expressions similar to those in (\ref{02}) has been seen to entail a 
conclusion of this type either when $n\le 3$ and $p=2$ (\cite{nikolic_saidhouari_JEEQ2023}), or $n\ge 1$ 
and $p\in (\max\{1,\frac{n}{2}\},\infty) \setminus \{\frac{3}{2}\}$ are arbitrary (\cite{Mey1}).
Further results of this flavor have been obtained not only in the presence of different boundary conditions and
for associated Cauchy problems in $\R^n$ (\cite{Sim}, \cite{Kal5}, \cite{mizohata_ukai}, \cite{dekkers}),
but also for a number of relatives of (\ref{K}) and (\ref{W}) such as equations of Blackstock-Crighton-Kuznetsov or
Blackstock-Crighton-Westervelt type or systems in which couplings to temperature variables or interaction with microbubbles
are taken into account
(\cite{brunnhuber_kaltenbacher}, \cite{brunnhuber_meyer}, \cite{nikolic_saidhouari_JDE}, \cite{Wil}, \cite{Care}, \cite{Nik5}), 
or even for
Jordan-Moore-Gibson-Thompson type equations involving third-order time derivatives 
(\cite{kaltenbacher_lasiecka_pospieszalska}, \cite{lasiecka_wang}, \cite{lasiecka_wang2}, \cite{nikolic_saidhouari_hereditary}, 
\cite{racke_saidhouari}, \cite{said_houari};
see also \cite{Bon1}, \cite{kaltenbacher_nikolic_M3AS2019} and \cite{kaltenbacher_nikolic_SIMA} for some related studies
on finite timescales).
Further aspects of equations related to (\ref{K}) and (\ref{W}) have been studied in
\cite{Shev}, \cite{Nik-1}, \cite{Dek1}, \cite{Pera}, \cite{Dor}, \cite{chen_arxiv} and \cite{Nik1}, for instance.\abs
{\bf Objectives and main results.} \quad
A common characteristic of the mentioned findings on solvability and large time asymptotics 
is linked to the methods by which they have been achieved: 
The approaches that dominate most of the literature either pursue variational strategies which operate in Hilbert space settings
by esentially relying on unknowns or derivatives thereof as test functions, or alternatively create setups accessible 
to theories of maximal Sobolev regularity in parabolic problems.
In both these scenarios, 
assumptions on smallness of quantities such as those in (\ref{01}) and (\ref{02}), and thus especially on initial smallness 
of some second-order spatial derivatives, are motivated by 
the ambition to expediently control ill-signed nonlinearities 	
by means of appropriate embeddings.
The observation that these nonlinear contributions to (\ref{K}) and (\ref{W}) involve spatial derivatives of
the respective unknowns at most of order one only, however, raises the question whether global solvability in problems
of this form can as well be asserted under smallness assumptions which merely require smallness hypotheses on the initial
data in certain first-order Sobolev spaces.\abs
The present manuscript addresses this in an alternative type of variational framework which aims at the discovery of certain
energy-like properties enjoyed by functionals that involve derivatives exclusively of first order.
Approaches of this form seem to have been pursued in parabolic systems with cross-diffusive ingredients 
(\cite{taowin_subcrit}, \cite{lou_win}), but their use in contexts of problems involving 
wave equations seems limited to few precedents addressing some quite particular and rather far relatives only
(cf., e.g., \cite{claes_win} and \cite{win_AMOP}).\abs
In order to highlight a possible potential for generalization, we will simultaneously examine (\ref{K}) and (\ref{W})
in the framework of the initial-boundary value problem
\be{0}
	\lball
	h(u,u_t) u_{tt} = \Del u_t + a\Del u + f(u,u_t,\na u,\na u_t),
	\qquad & x\in\Om, \ t>0, \\[1mm]
	u=0,            
	\qquad & x\in\pO, \ t>0, \\[1mm]
	u(x,0)=u_0(x), \quad u_t(x,0)=u_{0t}(x)
	\qquad & x\in\Om,
	\ear
\ee
for the unknown function $u$
in a smoothly bounded domain $\Om\subset \R^n$ with $n\ge 1$, where $a>0$ is a fixed parameter, and $h$ as well as $f$ are 
sufficiently regular functions which are such that $h(0,0)>0$, $f(0,0,0,0)=0$ and $\na f(0,0,0,0)=0$, thus ensuring
that along small trajectories, (\ref{0}) indeed can be viewed as a superlinearly forced viscous wave equation.
The PDE in (\ref{0}) reduces to (\ref{K}) on letting
\bas
	& & 
	u:=\psi
	\quad \mbox{and} \quad
	a:=\frac{c^2}{d}
	\qquad \mbox{as well as} \qquad
	\quad h(\xi,\sig):=\frac{1}{d}\Big(1-\frac{\beta}{c^2} \sig\Big) 
	\quad \mbox{and} \quad
	f(\xi,\sig,X,Y):=\frac{2}{d} X\cdot Y \\[2mm]
	& & \hs{70mm}
	\quad \mbox{for } (\xi,\sig,X,Y)\in \R^{2n+2},
\eas
while (\ref{W}) is covered by means of the identifications
\bas
	& &
	u:=p
	\quad \mbox{and} \quad
	a:=\frac{c^2}{d}
	\quad \mbox{as well as} \quad
	\quad h(\xi,\sig):=\frac{1}{d}\Big(1-\frac{2\beta}{\rho_0 c^2} \xi\Big) 
	\quad \mbox{and} \quad
	f(\xi,\sig,X,Y):=\frac{2\beta}{\rho_0 c^2 d} \sig^2 \\[2mm]
	& & \hs{70mm}
	\quad \mbox{for } (\xi,\sig,X,Y)\in \R^{2n+2}.
\eas
In the general setting of (\ref{0}), our main result then asserts that for all initial data which are suitably smooth and satisfy
a smallness condition in $W^{1,r}(\Om)\times W^{1,p}(\Om)$ with suitable $r>1$ and $p>1$, a globally defined smooth solution
exists and undergoes some exponential decay in the large time limit:
\begin{theo}\label{theo18}
  Let $n\ge 1$ and $\Om\subset\R^n$ be a bounded domain with smooth boundary, let $a>0$,
  and suppose that
  \be{h}
	h\in \bigcup_{\iota>0} C^{1+\iota}_{loc} (\R^2)
	\quad \mbox{is such that} \quad
	h(0,0)>0,
  \ee
  that
  \be{f0}
	\lbal
	f \in \bigcup_{\iota>0} C^{1+\iota}_{loc}(\R^{2n+2})
	\qquad \mbox{with} \\[1mm]
	f(0,0,0,0)=0 \mbox{ and } \na f(0,0,0,0)=0
	\ear
  \ee
  and that with some $\lam>1$, for each $M>0$ there exists $K_f(M)>0$ such that
  \be{f1}
	|f(\xi,\sig,X,Y)|
	\le K_f(M) \cdot (1+ |X| + |Y| )^\lam
	\quad \mbox{for all $(\xi,\sig,X,Y)\in \R^{2n+2}$ such that } |\xi|+|\sig|\le M.
  \ee
  Then given any
  \be{18.001}
	p>n\cdot\max\{\lam,2\}
	\quad \mbox{such that} \quad
	p\ge 2\cdot\max\{\lam,2\}
  \ee
  and an arbitrary $r\ge\max\{p,2\lam\}$ fulfilling
  \be{18.01}
	r>2\lam
	\qquad \mbox{if } n=2
  \ee
  and
  \be{18.02}
	\frac{np\lam}{n+p-2} \le r < \frac{np}{n-2}
	\qquad \mbox{if } n\ge 3,
  \ee
  one can find $\eta=\eta(p,r,a,h,f)>0$, $\beta=\beta(p,r,a,h,f)>0$ and $\Gamma=\Gamma(p,r,a,h,f)>0$ with the property that whenever
  \be{init}
	u_0\in C^2(\bom) 
	\mbox{ and } 
	u_{0t} \in \bigcup_{\iota>0} C^{1+\iota}(\bom)
	\quad \mbox{are such that} \quad
	u_0=u_{0t}=0 \mbox{ on } \pO,
  \ee
  and that
  \be{18.1}
	\io |\na u_0|^r \le \eta
	\qquad \mbox{and} \qquad
	\io |\na u_{0t}|^p \le \eta,
  \ee
  the problem (\ref{0}) admits a global classical solution
  \bas
	u\in C^0([0,\infty);C^1(\bom)) \cap C^{2,1}(\bom\times (0,\infty))
  \eas
  with
  \bas
	u_t\in C^0([0,\infty);C^1(\bom)) \cap C^{2,1}(\bom\times (0,\infty))
  \eas
  which is such that $h(u,u_t)\ge\frac{1}{\Gamma}$ in $\bom\times [0,\infty)$, and that
  \be{18.2}
	\|u(\cdot,t)\|_{W^{1,r}(\Om)}
	+ \|u_t(\cdot,t)\|_{W^{1,p}(\Om)}
	\le \Gamma e^{-\beta t}
	\qquad \mbox{for all } t>0.
  \ee
\end{theo}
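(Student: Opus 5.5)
We first secure local classical solvability together with an extensibility criterion. Since $h(u_0,u_{0t})$ is close to $h(0,0)>0$ for small data, we divide by $h$ and write $v:=u_t$. The problem then becomes the quasilinear parabolic equation
\bas
	v_t=\frac{1}{h(u,v)}\Big(\Del v + a\Del u + f(u,v,\na u,\na v)\Big), \qquad u(\cdot,t)=u_0+\int_0^t v(\cdot,s)\,ds,
\eas
which can be handled by a standard fixed point argument in $C^0([0,T];C^{1+\theta}(\bom))$ combined with parabolic Schauder theory. This yields a maximal existence time $\tm\in(0,\infty]$ and the alternative that either $\tm=\infty$ or
\bas
	\limsup_{t\nearrow\tm}\Big(\|u(\cdot,t)\|_{C^{1+\theta}(\bom)}+\|u_t(\cdot,t)\|_{C^{1+\theta}(\bom)}\Big)=\infty,
\eas
or $h(u,u_t)$ approaches $0$. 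We then set up a bootstrap. Let $T\le\tm$ be maximal with the property that $\io|\na u|^r\le\delta$ and $\io|\na u_t|^p\le\delta$ on $(0,T)$, where $\delta$ is small. Since $p>n$ and $r>n$ (the latter because $r\ge p$), Morrey's embedding gives $\|u\|_{L^\infty}+\|u_t\|_{L^\infty}\le C\delta^{1/p}+C\delta^{1/r}$. Hence $|u|+|u_t|\le 1$ and $h(u,u_t)\ge\frac{1}{2}h(0,0)$ on this interval.

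The core of the argument is a pair of first-order energy inequalities. For the $u_t$ component we test the equation, written as $u_{tt}=\frac{1}{h}(\Del u_t+a\Del u+f)$, against $-\na\cdot(|\na u_t|^{p-2}\na u_t)$. This produces a dissipation term comparable to $\io|\na u_t|^{p-2}|D^2u_t|^2$, or, after the usual identity, to $\io|\na|\na u_t|^{p/2}|^2$ together with $\io|\na u_t|^{p-2}|\Del u_t|^2$. The boundary term arising from $\frac{\pa|\na u_t|^2}{\pa\nu}\le C|\na u_t|^2$ on $\pO$ is absorbed via trace and Gagliardo--Nirenberg inequalities. The contributions of $\na h\cdot\na u_t$ and of $\na f$, which involve $\na u$, $\na u_t$ and second derivatives, are estimated using $\na f(0)=0$ and the $C^{1+\iota}$ regularity in the form $|\na f|\le C(|u|+|u_t|+|\na u|+|\na u_t|)^{\iota'}$ for small arguments. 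Two further ingredients are needed there. The first is the growth bound (\ref{f1}) with exponent $\lam$, used when $|\na u|$ or $|\na u_t|$ is large; these quantities are not pointwise bounded. The second is a splitting $|\na u|\cdot|\na u_t|^{p-2}|\Del u_t|\le\eps|\na u_t|^{p-2}|\Del u_t|^2+C|\na u|^2|\na u_t|^{p-2}$.

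The coupling to $u$ is controlled through a second functional $\io|\na u|^r$, differentiated along $\na u_t$. This requires $L^r$ bounds on $\na u_t$, and these must be extracted from $\io|\na u_t|^{p-2}|D^2u_t|^2$ by Gagliardo--Nirenberg. The conditions (\ref{18.001})--(\ref{18.02}) are exactly what makes this interpolation possible. In particular $r<\frac{np}{n-2}$ is the Sobolev range of $W^{1,p}$-type regularity for $\na u_t$, and $r\ge\frac{np\lam}{n+p-2}$ makes the $\lam$-superlinear term subcritical. The decay of $u$ itself comes from the damped structure. Because $a\Del u$ appears, we would add a multiple of $\io|\na u|^{r-2}\na u\cdot\na u_t$, or use the variation-of-constants formula $u(t)=e^{-a\cdot}\ldots$ obtained by rewriting the equation as $(\Del u)_t+a\Del u=hu_{tt}-f$. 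An elliptic or ODE comparison in $W^{1,r}$ for $w=u_t+au$ then produces a linear decay rate.

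Assembling these estimates, we expect a differential inequality
\bas
	\frac{d}{dt}\,\mathcal{E}(t)+c\,\mathcal{E}(t)\le C\,\mathcal{E}(t)^{1+\kappa}\quad\text{on }(0,T)
\eas
for some $\kappa>0$, where $\mathcal{E}\simeq\io|\na u|^r+\io|\na u_t|^p$. Taking $\eta$ small enough relative to $\delta$ shows that $\mathcal{E}$ decays exponentially and stays below $\delta/2$, which contradicts the maximality of $T<\tm$. The $W^{1,p}$ and $W^{1,r}$ bounds, together with parabolic Hölder regularity theory, then exclude blow-up in $C^{1+\theta}$, so $\tm=\infty$ and (\ref{18.2}) holds. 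The main obstacle is the previous step: closing the estimate for $\frac{d}{dt}\io|\na u_t|^p$ when the nonlinearity depends on $\na u_t$ and when $h$ multiplies $u_{tt}$. The latter leaves a term $\frac{\na h}{h}\,u_{tt}$ that again contains $\Del u_t$. The first thing we would try is to test the divided form, so that only $\na(1/h)$ times $(\Del u_t+a\Del u+f)$ appears, and to absorb it using the smallness of $|\na u|+|\na u_t|$ in $L^p$ combined with the dissipation.
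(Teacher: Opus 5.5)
Your plan has a genuine gap, and it sits exactly at what you call the main obstacle: with $u_t$ as the parabolic unknown, the estimate cannot close under first-order smallness.

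Testing $u_{tt}=\frac{1}{h}(\Del u_t+a\Del u+f)$ against $-\na\cdot(|\na u_t|^{p-2}\na u_t)$ leaves the term
\[
	-a\io \frac{1}{h(u,u_t)}\,\Del u\,\na\cdot\big(|\na u_t|^{p-2}\na u_t\big).
\]
You never estimate it, and it contains second derivatives of $u$. The boundary does not rescue this. Under Dirichlet conditions the bound $\partial_\nu|\na u_t|^2\le C|\na u_t|^2$ is false in general; the correct bound is $\partial_\nu|\na\varphi|^2\le 2\partial_\nu\varphi\,\Del\varphi+C_\Om|\partial_\nu\varphi|^2$. On $\pO$ the equation only gives $\Del u_t=-a\Del u-f$, so $\Del u$ reappears in a boundary integral. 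Expanding $\na f$, as you suggest, adds $f_X\cdot D^2u$. Moreover, the growth hypothesis bounds $f$, not $\na f$, at large gradients.

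None of this is controlled by $\io|\na u|^r$ or $\io|\na u_t|^p$. Controlling it would require carrying $\|D^2u\|$ in the energy, hence second-order smallness of $u_0$, which is precisely what the theorem avoids. Likewise, $\frac{d}{dt}\io|\na u|^r=r\io|\na u|^{r-2}\na u\cdot\na u_t$ has no damping. Both of your remedies reintroduce second-order terms:
\begin{itemize}
\item[] the multiplier $\io|\na u|^{r-2}\na u\cdot\na u_t$ generates $-\io\na\cdot(|\na u|^{r-2}\na u)\,u_{tt}$;
\item[] variation of constants for $\Del u$ starts from $\Del u_0$ and needs $u_{tt}$.
\end{itemize}

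The missing idea is to make $v:=u_t+au$ the unknown that carries the dissipation. You mention $w=u_t+au$, but only as an afterthought for the decay of $u$. With this substitution $\Del u_t+a\Del u=\Del v$, so
\[
	h(u,u_t)v_t=\Del v+ah(u,u_t)u_t+f \quad\mbox{in } \Om, \qquad \Del v=-f \quad\mbox{on } \pO .
\]
Now test by $-\na\cdot(|\na v|^{p-2}\na v)$. One integration by parts moves the $f$-term onto $\na\cdot(|\na v|^{p-2}\na v)$, so $f$ is never differentiated. It is then handled by Young's inequality using $f(0)=0$, $\na f(0)=0$ and the growth bound. Its boundary part cancels against $\partial_\nu v\,\Del v$, and $\na(1/h)$ involves only $\na u$ and $\na v$.

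You recover $u$ from $u_t+au=v$, which gives $\frac{d}{dt}\|\na u\|_{L^r}^p+a\|\na u\|_{L^r}^p\le a^{1-p}\|\na v\|_{L^r}^p$. This formulation is also what makes local existence and the $W^{1,p}$ extensibility criterion work. The source in the $v$-equation lies in $L^q$ with $q=p/\max\{\lam,2\}>n$, whereas $a\Del u/h$ does not.

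Even with $v$, your $\mathcal{E}\simeq\io|\na u|^r+\io|\na u_t|^p$ does not yield $\mathcal{E}'+c\mathcal{E}\le C\mathcal{E}^{1+\kappa}$. The equation $v_t=\frac1h\Del v+av-a^2u+\frac fh$ is anti-damped at linear level. So the $\io|\na v|^p$ estimate carries non-small linear terms:
\begin{itemize}
\item[] $a\io|\na v|^p$;
\item[] $a^2\io|\na v|^{p-2}\na u\cdot\na v$;
\item[] the curvature boundary term.
\end{itemize}
In addition, the $u$-inequality feeds in $a^{1-p}\|\na v\|_{L^r}^p$ with a large weight. For arbitrary $a>0$ and $\Om$ these cannot in general be absorbed by $\io|\na v|^{p-2}|D^2v|^2$ through a fixed-constant Sobolev inequality.

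The paper closes this with two further ingredients. The first is the $L^2$-level wave energy $\F_1$, obtained by testing with $u$ and with $u_t$; it decays linearly and dominates $\io v^2$. The second is the interpolation
\[
	\|\na v\|_{L^r}^p\le\eps\io|\na v|^{p-2}|D^2v|^2+C_\eps\Big(\io v^2\Big)^{p/2}, \qquad p\le r<\frac{np}{(n-2)_+},
\]
which is where the upper bound on $r$ is used. The working functional is $\io|\na v|^p+b_1\|\na u\|_{L^r}^p+b_2\F_1^{p/2}$.
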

{\bf Remark.} \quad
We emphasize that the conditions in (\ref{init}) on smoothness of the initial data are imposed here only to ensure that
solutions can be constructed which are classical. 
Since the only quantitative requirement on $u_0$ and $u_{0t}$ here and below will be through (\ref{18.1}) only, 
in the presence of less regular initial data which instead of (\ref{init}) satisfy $u_0\in W_0^{1,r}(\Om)$ 
and $u_{0t}\in W_0^{1,p}(\Om)$ with $r$ and $p$ fulfilling (\ref{18.001})-(\ref{18.02}),
straightforward adaptation of the analysis detailed below can be used to alternatively derive a statement, parallel to 
that from Theorem \ref{theo18}, on global existence of weak solutions.\abs
{\bf Main ideas.} \quad
Starting from a maximally defined local classical solution of (\ref{0}) in $\Om\times (0,\tm)$, 
which together with a rather coarse extensibility criterion (\ref{ext}) will be obtained 
under the assumptions (\ref{h})-(\ref{f1}) and (\ref{init}) via Schauder’s fixed point theorem in Lemma \ref{lem_loc}, 
the first objective will be to refine this blow-up alternative in such a way that
\bas
	\mbox{if $\tm<\infty$, \ then \
	either \ }
	\liminf_{t\nearrow\tm} \inf_{x\in\Om} h\big(u(x,t),u_t(x,t)\big)=0
	\ \mbox{or} \
	\limsup_{t\nearrow\tm} \|u_t(\cdot,t)\|_{W^{1,p}(\Om)}
	= \infty.
\eas
In order to exclude the first possibility herein, we substitute 
\bas
	v:= u_t + au
\eas
and view (\ref{0}) as the evolution system 
\bas
	\lbal
	h(u,u_t)v_t = \Del v + ah(u,u_t)u_t + f(u,u_t,\na u, \na u_t),\\[1mm]
	u_t=v-au,
	\ear
\eas
in which the parabolic character of the first equation will be used to carve out a conditional linear dissipation feature acting
at low level of regularity: Namely, there exists $\del_1=\del_1(h)>0$ such that as long as
\be{05}
	|u| + |u_t| \le \del_1,
\ee
the functional
\be{06}
	\F_1(t):=\frac{1+Ba}{2} \io |\na u|^2
	+ \io H^{(1)}(u,u_t) + B \io H^{(2)}(u,u_t),
\ee
with $p\ge 2$ and suitably chosen $B>0$, and with
\be{07} 
	H^{(1)}(\xi,\sig) := \int_0^\sig \xi h(\xi,\wh{\sig}) d\wh{\sig}
	\quad \mbox{and} \quad
	H^{(2)}(\xi,\sig):=\int_0^\sig \wh{\sig} h(\xi,\wh{\sig})d\wh{\sig},
	\qquad (\xi,\sig)\in\R^2,
\ee
satisfies
\bea{08}
	\frac{d}{dt} \F_1^\frac{p}{2}(t)
	+ \frac{1}{C} \cdot \F_1^\frac{p}{2}(t)
	&\le& C \cdot \bigg\{ \io |u| \cdot |f(u,u_t,\na u, \na u_t)| \bigg\}^\frac{p}{2}
	+ C \cdot \bigg\{ \io |v|\cdot |f(u,u_t,\na u,\na u_t)|\bigg\}^\frac{p}{2} \nn\\
	& & + C \cdot \bigg\{ \io |u|^3 \bigg\}^\frac{p}{2}
	+ C \cdot \bigg\{ \io |v|^3 \bigg\}^\frac{p}{2}
\eea
with some $C=C(p,a,h)>0$ (Corollary \ref{cor14}).
To control the superlinear production terms appearing herein, in the core part of our analysis we will subsequently 
complement this by tracing the evolution of 
\be{09}
	\F(t)
	:= \io |\na v|^p
	+ b_1 \cdot \bigg\{ \io |\na u|^r \bigg\}^\frac{p}{r}
	+ b_2 \F_1^\frac{p}{2}(t)
\ee
with certain $b_i=b_i(p,r,a,h,f)>0$, $i\in\{1,2\}$,
and by hence deriving, yet as long as (\ref{05}) holds, an inequality of the form
\be{010}
	\F'(t) + \frac{1}{C} \F(t)
	+ \Big\{ \frac{1}{C} - C \F^{\kappa_1}(t) \Big\} \cdot \io |\na v|^{p-2} |D^2 v|^2
	\le C \F^{\kappa_2}(t)
\ee
with some $C=C(p,r,a,h,f)>0$ and some $\kappa_1=\kappa_1(p,f)>1$ and $\kappa_2=\kappa_2(p,h,f)>1$
(Section \ref{sect5} and Lemma \ref{lem15}).
Thanks to positivity of $\kappa_1$ and the inequality $\kappa_2>1$, this will imply that if $\F(0)$ is sufficiently small,
then $\F$ actually lies below an exponentially decaying function,
and due to the continuity of the embeddings
from $W^{1,r}(\Om)$ and $W^{1,p}(\Om)$ into $L^\infty(\Om)$, this will enable us to close the loop by showing that
suitably small values of $\F(0)$ ensure not only validity of (\ref{05}) throughout evolution, but also global extensibility
(Section \ref{sect6}).
\mysection{Some functional inequalities}
In this section, we present some functional inequalities that will aid in proving the main result. The first of these is used to take advantage of second-order dissipative effect in Lemma \ref{lem1}.
\begin{lem}\label{lem2}
  Let $p\ge 2$ and $s\ge 1$ be such that $s\le \frac{np}{(n-2)_+}$.
  Then there exists $C(p,s)>0$ such that
  \be{2.1}
	\|\na\vp\|_{L^s(\Om)}^p
	\le C(p,s) \io |\na\vp|^{p-2} |D^2\vp|^2
	\qquad \mbox{for all $\vp\in C^2(\bom)$ fulfilling $\vp=0$ on $\pO$.}
  \ee
\end{lem}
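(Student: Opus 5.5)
Set $w:=|\na\vp|^{\frac{p}{2}}$, so that the claim reads
\[
\|w\|_{L^{\frac{2s}{p}}(\Om)}^2\le C\io |\na w|^2 .
\]
Indeed $\|\na\vp\|_{L^s(\Om)}^p=\|w\|_{L^{2s/p}(\Om)}^2$, and wherever $\na\vp\ne 0$ we have $|\na w|\le \frac{p}{2}|\na\vp|^{\frac{p-2}{2}}|D^2\vp|$. Hence $\io|\na w|^2\le \frac{p^2}{4}\io|\na\vp|^{p-2}|D^2\vp|^2$. One first checks $w\in W^{1,2}(\Om)$, approximating by $(|\na\vp|^2+\eps)^{p/4}$ if necessary, since $p\ge2$ makes this harmless.

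The exponent condition $s\le\frac{np}{(n-2)_+}$ means exactly $\frac{2s}{p}\le\frac{2n}{(n-2)_+}$, so Sobolev's embedding gives
\[
\|w\|_{L^{2s/p}(\Om)}\le C\bigl(\|\na w\|_{L^2(\Om)}+\|w\|_{L^2(\Om)}\bigr).
\]
For $n\le 2$ this holds for every finite exponent, and the case $2s/p<1$ is covered by Hölder's inequality on the bounded domain. It therefore remains to control $\|w\|_{L^2}^2=\io|\na\vp|^p$, or an even weaker norm such as $\|w\|_{L^1}^{2}$, by $\io|\na w|^2$. This is where the boundary condition $\vp=0$ on $\pO$ has to be used, since $w$ itself need not vanish on $\pO$.

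This lower-order control is the main step. I would argue by contradiction with compactness. Suppose there are $\vp_k$ with $\vp_k=0$ on $\pO$, $\|w_k\|_{L^2}=1$ and $\|\na w_k\|_{L^2}\to0$. Then $w_k\to c$ in $L^2$ for a constant $c=|\Om|^{-1/2}>0$, so $|\na\vp_k|^p\to c^2$ in $L^1$, and in particular $\na\vp_k$ is bounded in $L^p$. But $\vp_k=0$ on $\pO$ forces $\io\Del\vp_k=\int_{\pO}\pa_\nu\vp_k$, which alone gives no contradiction, so a more quantitative route is better.

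Instead I would use the identity
\[
\io|\na\vp|^p=-\io\vp\,\na\cdot\bigl(|\na\vp|^{p-2}\na\vp\bigr),
\]
valid because $\vp=0$ on $\pO$. Expanding, $|\na\cdot(|\na\vp|^{p-2}\na\vp)|\le (p-1)\sqrt n\,|\na\vp|^{p-2}|D^2\vp|$. Young's inequality then gives
\[
\io|\na\vp|^p\le C\Bigl(\io|\na\vp|^{p-2}|D^2\vp|^2\Bigr)^{\frac12}\Bigl(\io\vp^2|\na\vp|^{p-2}\Bigr)^{\frac12}.
\]
For the last factor, Hölder gives $\io\vp^2|\na\vp|^{p-2}\le\|\vp\|_{L^p}^2\|\na\vp\|_{L^p}^{p-2}$, and Poincaré's inequality in $W_0^{1,p}$ gives $\|\vp\|_{L^p}\le C\|\na\vp\|_{L^p}$. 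Together these yield $\io|\na\vp|^p\le C\bigl(\io|\na\vp|^{p-2}|D^2\vp|^2\bigr)^{1/2}\bigl(\io|\na\vp|^p\bigr)^{1/2}$, hence
\[
\io|\na\vp|^p\le C\io|\na\vp|^{p-2}|D^2\vp|^2 .
\]
Combining this with the Sobolev step completes the proof, with $C$ depending on $p$, $s$ and $\Om$.
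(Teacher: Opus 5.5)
Your proposal is correct and follows essentially the paper's route: the Sobolev embedding $W^{1,2}(\Om)\hra L^{2s/p}(\Om)$ is applied to $|\na\vp|^{p/2}$, and the remaining term $\io|\na\vp|^p$ is controlled through $\io|\na\vp|^p=-\io\vp\,\na\cdot(|\na\vp|^{p-2}\na\vp)$, Cauchy--Schwarz, H\"older and the Poincar\'e inequality in $W_0^{1,p}(\Om)$. The only difference is that the paper carries the regularization $(|\na\vp|^2+\eps)^{p/4}$ through the whole computation and lets $\eps\searrow 0$ at the end, whereas you work directly with $|\na\vp|^{p/2}$ and $|\na\vp|^{p-2}\na\vp$; this is legitimate because for $p\ge 2$ these are Lipschitz and $C^1$ on $\bom$, respectively (note also that the step you attribute to Young's inequality is really Cauchy--Schwarz, and the abandoned compactness paragraph can simply be dropped).
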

\proof 
  Since the inequality $s\le\frac{np}{(n-2)_+}$ ensures that $\frac{2s}{p} \le \frac{2n}{(n-2)_+}$, the continuity
  of the embedding $W^{1,2}(\Om) \hra L^\frac{2s}{p}(\Om)$ thereby entailed enables us to fix $c_1=c_1(p,s)>0$ such that
  \be{2.2}
	\|\psi\|_{L^\frac{2s}{p}(\Om)}^2 \le c_1 \|\na\psi\|_{L^2(\Om)}^2 + c_1 \|\psi\|_{L^2(\Om)}^2
	\qquad \mbox{for all } \psi\in W^{1,2}(\Om).
  \ee
  Moreover choosing $c_2=c_2(p)>0$ such that in line with a Poincar\'e inequality we have
  \be{2.3}
	\|\psi\|_{L^p(\Om)} \le c_2 \|\na \psi\|_{L^p(\Om)}
	\qquad \mbox{for all } \psi\in W_0^{1,p}(\Om),
  \ee
  given an arbitrary $\vp\in C^2(\bom)$ satisfying $\vp=0$ on $\pO$ we can first apply (\ref{2.2}) to see that
  for each $\eps>0$,
  \bea{2.4}
	\bigg\{ \io (|\na\vp|^2+\eps)^\frac{s}{2} \bigg\}^\frac{p}{s}
	&=& \Big\| (|\na\vp|^2+\eps)^\frac{p}{4} \Big\|_{L^\frac{2s}{p}(\Om)}^2 \nn\\
	&\le& c_1 \Big\| \na (|\na\vp|^2+\eps)^\frac{p}{4} \Big\|_{L^2(\Om)}^2
	+ c_1 \Big\| (|\na\vp|^2+\eps)^\frac{p}{4} \Big\|_{L^2(\Om)}^2 \nn\\
	&\le& \frac{p^2 c_1}{4} \io (|\na\vp|^2+\eps)^\frac{p-4}{2} |\na\vp|^2 |D^2\vp|^2
	+ c_1 \io (|\na\vp|^2+\eps)^\frac{p}{2},
  \eea
  because $\na (|\na\vp|^2+\eps)^\frac{p}{4} = \frac{p}{2} (|\na\vp|^2+\eps)^\frac{p-4}{4} D^2\vp\cdot\na\vp$.
  Here, after an integration by parts using that $|\Del\vp|\le \sqrt{n}|D^2\vp|$ and hence
  \bas
	\na\cdot \big\{ (|\na\vp|^2+\eps)^\frac{p-2}{2} \na\vp\big\}
	&=& (p-2) (|\na\vp|^2+\eps)^\frac{p-4}{2} \na\vp\cdot (D^2\vp\cdot\na\vp)
	+ (|\na\vp|^2+\eps)^\frac{p-2}{2} \Del\vp \\
	&\le& (p-2+\sqrt{n}) (|\na\vp|^2+\eps)^\frac{p-2}{2} |D^2\vp|
  \eas
  shows that in line with the H\"older inequality and (\ref{2.3}),
  \bas
	& & \hs{-16mm}
	\io (|\na\vp|^2+\eps)^\frac{p}{2}
	- \eps \io (|\na\vp|^2+\eps)^\frac{p-2}{2} \\
	&=& \io (|\na\vp|^2+\eps)^\frac{p-2}{2} \na\vp\cdot\na \vp \\
	&=& - \io \vp \na \cdot \big\{ (|\na\vp|^2+\eps)^\frac{p-2}{2} \na\vp\big\} \\
	&\le& (p-2+\sqrt{n}) \io |\vp| (|\na\vp|^2+\eps)^\frac{p-2}{2} |D^2\vp| \\
	&\le& (p-2+\sqrt{n}) \cdot \bigg\{ \io (|\na\vp|^2+\eps)^\frac{p-2}{2} |D^2\vp|^2 \bigg\}^\frac{1}{2}
		\cdot \bigg\{ \io |\vp|^p \bigg\}^\frac{1}{p} \cdot \bigg\{ \io (|\na\vp|^2+\eps)^\frac{p}{2}\bigg\}^\frac{p-2}{2p}
		\\
	&\le& (p-2+\sqrt{n})c_2 \cdot \bigg\{ \io (|\na\vp|^2+\eps)^\frac{p-2}{2} |D^2\vp|^2 \bigg\}^\frac{1}{2}
		\cdot \bigg\{ \io |\na \vp|^p \bigg\}^\frac{1}{p} \cdot \bigg\{ \io (|\na\vp|^2+\eps)^\frac{p}{2}\bigg\}^\frac{p-2}{2p}
		\\
	&\le& (p-2+\sqrt{n})c_2 \cdot \bigg\{ \io (|\na\vp|^2+\eps)^\frac{p-2}{2} |D^2\vp|^2 \bigg\}^\frac{1}{2} 
		\cdot \bigg\{ \io (|\na\vp|^2+\eps)^\frac{p}{2}\bigg\}^\frac{1}{2}.
  \eas
  Therefore,
  \bas
	\io (|\na\vp|^2+\eps)^\frac{p}{2}
	\le (p-2+\sqrt{n})^2 c_2^2 \io (|\na\vp|^2+\eps)^\frac{p-2}{2} |D^2\vp|^2
	+ 2 \eps \io (|\na\vp|^2+\eps)^\frac{p-2}{2},
  \eas
  By again estimating $|\na\vp|^2 \le |\na\vp|^2+\eps$ on the right-hand side therein, from (\ref{2.4}) we thus obtain that
  \bas
	\bigg\{ \io (|\na\vp|^2+\eps)^\frac{s}{2} \bigg\}^\frac{p}{s}
	&\le& \Big\{ \frac{p^2 c_1}{4} + (p-2+\sqrt{n})^2 c_1 c_2^2\Big\} \io (|\na\vp|^2+\eps)^\frac{p-2}{2} |D^2\vp|^2 \\
	& & + 2 c_1 \eps \io (|\na\vp|^2+\eps)^\frac{p-2}{2},
  \eas
  from which upon letting $\eps\searrow 0$ we obtain (\ref{2.1}) with $C(p,s):=\frac{p^2 c_1}{4} + (p-2+\sqrt{n})^2 c_1 c_2^2$,
  because $p\ge 2$.
\qed
The second inequality interpolates gradients between weighted $L^2$-norms of the Hessians and certain first-order Lebesgue norms. It is obtained by combining the first inequality with a standard Gagliardo-Nirenberg type estimate, taking into account the same regularization of $|\na \vp|^2$ as $|\na \vp|^2 + \eps$ used in Lemma \ref{lem2}.
\begin{lem}\label{lem3}
  Let $p\ge 2$ and $q\in [p,\frac{(n+2)p}{n}]$. Then there exists $C(p,q)>0$ such that
  \be{3.1}
	\io |\na\vp|^q
	\le C(p,q) \cdot \bigg\{ \io |\na\vp|^p \bigg\}^\frac{q-p}{p} \cdot \io |\na\vp|^{p-2} |D^2 \vp|^2
	\qquad \mbox{for all $\vp\in C^2(\bom)$ fulfilling $\vp=0$ on $\pO$.}
  \ee
\end{lem}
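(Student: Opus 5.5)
We will work with the regularized quantity $\psi:=(|\na\vp|^2+\eps)^\frac{p}{4}$ for $\eps>0$, as in Lemma \ref{lem2}. The aim is to apply a Gagliardo--Nirenberg inequality to $\psi$ with $L^\frac{2q}{p}$ on the left, the gradient $\|\na\psi\|_{L^2}$ as the highest-order term, and $\|\psi\|_{L^2}$ as the lower-order term. At the end we let $\eps\searrow 0$.

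\textbf{Step 1: the Gagliardo--Nirenberg bound.} Since $q\le\frac{(n+2)p}{n}$, we have $\frac{2q}{p}\le\frac{2(n+2)}{n}$. For $q\ge p$ the Gagliardo--Nirenberg inequality then gives
\bas
	\|\psi\|_{L^\frac{2q}{p}(\Om)}^\frac{2q}{p}
	\le c_1\|\psi\|_{W^{1,2}(\Om)}^{\frac{2q}{p}\theta}\|\psi\|_{L^2(\Om)}^{\frac{2q}{p}(1-\theta)},
	\qquad
	\theta=\frac{n(q-p)}{2q}.
\eas
Here $\theta\in[0,\frac{2}{n+2}]\subset[0,1]$. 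The value of $\theta$ comes from $\frac{p}{2q}=\theta(\frac12-\frac1n)+\frac{1-\theta}{2}$.

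\textbf{Step 2: the exponents match.} We compute
\bas
	\frac{2q}{p}\theta=\frac{n(q-p)}{p}
	\qquad\mbox{and}\qquad
	\frac{2q}{p}(1-\theta)=\frac{2q-n(q-p)}{p}.
\eas
The norm $\|\psi\|_{W^{1,2}}$ enters with a power that is at most $2$. This is exactly what the structure of the right side of (\ref{3.1}) requires, but only after an interpolation. To avoid that, we use the scale-invariant form instead. We bound $\|\psi\|_{L^2}^2\le\|\psi\|_{W^{1,2}}^2$, so that
\bas
	\io\psi^\frac{2q}{p}
	\le c_1\|\psi\|_{W^{1,2}}^{2}\cdot\|\psi\|_{L^2}^{\frac{2q}{p}(1-\theta)}\cdot\|\psi\|_{W^{1,2}}^{\frac{2q}{p}\theta-2}.
\eas
Since $\frac{2q}{p}\theta\le 2$, the last factor has a nonpositive exponent. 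We then use $\|\psi\|_{L^2}\le\|\psi\|_{W^{1,2}}$ in reverse on that factor, which requires a lower bound of $\|\psi\|_{W^{1,2}}$ by $\|\psi\|_{L^2}$. Hence
\bas
	\io\psi^\frac{2q}{p}\le c_1\|\psi\|_{W^{1,2}}^2\|\psi\|_{L^2}^{\frac{2q}{p}-2}.
\eas
This is the clean form, and its exponents are right: $\|\psi\|_{L^2}^{\frac{2q}{p}-2}=\{\io(|\na\vp|^2+\eps)^\frac p2\}^\frac{q-p}{p}$.

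\textbf{Step 3: identify $\|\psi\|_{W^{1,2}}^2$.} The computation in the proof of Lemma \ref{lem2} shows
\bas
	\|\psi\|_{W^{1,2}}^2\le C\io(|\na\vp|^2+\eps)^\frac{p-2}{2}|D^2\vp|^2+C\eps\io(|\na\vp|^2+\eps)^\frac{p-2}{2}.
\eas
The first part comes from $\|\na\psi\|_{L^2}^2$, which the chain rule bounds by $\frac{p^2}{4}$ times the weighted Hessian. The second part is the bound for $\io(|\na\vp|^2+\eps)^\frac p2$ derived there by integration by parts and Poincar\'e's inequality, which uses $\vp=0$ on $\pO$.

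\textbf{Step 4: pass to the limit.} Since $|\na\vp|^q\le\psi^\frac{2q}{p}$, we obtain (\ref{3.1}) plus $\eps$-dependent error terms, and these vanish as $\eps\searrow0$ by dominated convergence, because $\vp\in C^2(\bom)$.

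The main obstacle is the exponent bookkeeping in Step 2. The restriction $q\le\frac{(n+2)p}{n}$ is precisely what makes the gradient power $\frac{2q}{p}\theta\le 2$. The endpoint $q=p$ is trivial by Lemma \ref{lem2} with $s=p$.
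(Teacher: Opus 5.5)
Your proof is correct and is essentially the paper's argument. You apply Gagliardo--Nirenberg to $\psi=|\na\vp|^{p/2}$ (regularized by $\eps$ in your version) and absorb the leftover power $\|\psi\|_{L^2(\Om)}^{((n+2)p-nq)/p}$ into $\io|\na\vp|^{p-2}|D^2\vp|^2$ through the Lemma \ref{lem2}-type bound $\io|\na\vp|^p\lesssim\io|\na\vp|^{p-2}|D^2\vp|^2$; you raise the power of $\|\psi\|_{W^{1,2}(\Om)}$ to $2$ using $\|\psi\|_{W^{1,2}(\Om)}\ge\|\psi\|_{L^2(\Om)}>0$ with a nonpositive exponent, while the paper lowers the power of $\|\psi\|_{L^2(\Om)}$, which is the same step. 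One harmless slip: $\theta=\frac{n(q-p)}{2q}$ ranges over $[0,\frac{n}{n+2}]$, not $[0,\frac{2}{n+2}]$, and this interval still lies in $[0,1)$.
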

\proof 
  As our assumptions particularly ensure that $\frac{2q}{p} \le \frac{2n}{(n-2)_+}$, we may employ a Gagliardo-Nirenberg inequality
  to fix $c_1=c_1(p,q)>0$ such that
  \be{3.2}
	\|\psi\|_{L^\frac{2q}{p}(\Om)}^\frac{2q}{p}
	\le c_1 \|\na\psi\|_{L^2(\Om)}^\frac{n(q-p)}{p} \|\psi\|_{L^2(\Om)}^\frac{np-(n-2)q}{p}
	+ c_1 \|\psi\|_{L^2(\Om)}^\frac{2q}{p}
	\qquad \mbox{for all } \psi\in W^{1,2}(\Om),
  \ee
  while invoking Lemma \ref{lem2} we find $c_2=c_2(p)>0$ satisfying
  \be{3.3}
	\io |\na\psi|^p \le c_2 \io |\na\psi|^{p-2} |D^2\psi|^2
	\qquad \mbox{for all $\psi\in C^2(\bom)$ such that $\psi=0$ on $\pO$.}
  \ee
  Now for fixed $\vp\in C^2(\bom)$ fulfilling $\vp=0$ on $\pO$, we abbreviate $I(\vp):=\io |\na\vp|^{p-2} |D^2\vp|^2$
  and note that then $\io \Big| \na |\na\vp|^\frac{p}{2}\Big|^2 \le \frac{p^2}{4} I(\vp)$, so that from (\ref{3.2}) and (\ref{3.3})
  we obtain that since $(n+2)p-nq \ge 0$,
  \bas
	\io |\na\vp|^q
	&=& \Big\| |\na\vp|^\frac{p}{2}\big\|_{L^\frac{2q}{p}(\Om)}^\frac{2q}{p} \\
	&\le& c_1 \Big\| \na |\na\vp|^\frac{p}{2}\Big\|_{L^2(\Om)}^\frac{n(q-p)}{p}
		\Big\| |\na\vp|^\frac{p}{2}\Big\|_{L^2(\Om)}^\frac{np-(n-2)q}{p} 
	+ c_1 \Big\| |\na\vp|^\frac{p}{2}\Big\|_{L^2(\Om)}^\frac{2q}{p} \\
	&\le& c_1 I^\frac{n(q-p)}{2p}(\vp) \cdot \bigg\{ \io |\na\vp|^p \bigg\}^\frac{np-(n-2)q}{2p}
	+ c_1 \cdot \bigg\{ \io |\na\vp|^p \bigg\}^\frac{q}{p} \\
	&=& c_1 I^\frac{n(q-p)}{2p}(\vp) \cdot \bigg\{ \io |\na\vp|^p \bigg\}^\frac{q-p}{p}
		\cdot \bigg\{ \io |\na\vp|^p \bigg\}^\frac{(n+2)p-nq}{2p}
	+ c_1 \cdot \bigg\{ \io |\na\vp|^p \bigg\}^\frac{q-p}{p} \cdot \io |\na\vp|^p \\
	&\le& c_1 I^\frac{n(q-p)}{2p}(\vp) \cdot \bigg\{ \io |\na\vp|^p \bigg\}^\frac{q-p}{p}
		\cdot \big(c_2 I(\vp)\big)^\frac{(n+2)p-nq}{2p}
	+ c_1 \cdot \bigg\{ \io |\na\vp|^p \bigg\}^\frac{q-p}{p} \cdot c_2 I(\vp).
  \eas
  Since $\frac{n(q-p)}{2p} + \frac{(n+2)p-nq}{2p}=1$, this yields (\ref{3.1}) with $C(p,q):=c_1 c_2^\frac{(n+2)p-nq}{2p} + c_1 c_2$.
\qed
The third inequality interpolates gradients between weighted $L^2$-norms of Hessians and $L^2$ norms. Its consequence, stated in Corollary \ref{cor312}, will be used in the proof of Lemma \ref{lem16}.
\begin{lem}\label{lem311}
  Let $p\ge 2$ and $r\ge p$ satisfy $r\le\frac{np}{(n-2)_+}$.
  Then there exists $C(p,r)>0$ such that for each $\vp\in C^2(\bom)$ fulfilling $\vp=0$ on $\pO$,
  \bea{311.1}
	\|\na\vp\|_{L^r(\Om)}^p
	&\le& C(p,r)\cdot\bigg\{ \io |\na\vp|^{p-2} |D^2\vp|^2 \bigg\}^\frac{1+\frac{n}{2}-\frac{n}{r}}{1+\frac{n}{2}-\frac{n-2}{p}}
		\cdot \bigg\{ \io \vp^2 \bigg\}^\frac{1-\frac{n}{2}+\frac{np}{2r}}{1+\frac{n}{2}-\frac{n-2}{p}} \nn\\
	& & + C(p,r)\cdot\bigg\{ \io |\na\vp|^{p-2} |D^2\vp|^2 \bigg\}^\frac{1+\frac{n}{2}-\frac{n}{p}}{1+\frac{n}{2}-\frac{n-2}{p}}
		\cdot \bigg\{ \io \vp^2 \bigg\}^\frac{1}{1+\frac{n}{2}-\frac{n-2}{p}}.
  \eea
\end{lem}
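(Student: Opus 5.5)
The plan is to reduce everything to the auxiliary function $w:=|\na\vp|^\frac{p}{2}$, exactly as in Lemmas \ref{lem2} and \ref{lem3}. We use two facts about it. First, $\io|\na w|^2\le\frac{p^2}{4}\io|\na\vp|^{p-2}|D^2\vp|^2=:\frac{p^2}{4}I(\vp)$. Second, the left-hand side of (\ref{311.1}) equals $\|w\|_{L^\frac{2r}{p}(\Om)}^2$. The term $\io\vp^2$ has to enter through a low-order Lebesgue norm of $w$, and I would obtain that norm from an integration by parts that uses $\vp=0$ on $\pO$. As in Lemma \ref{lem2}, I would first work with $(|\na\vp|^2+\eps)$ in place of $|\na\vp|^2$ to justify differentiations, and let $\eps\searrow0$ at the end.

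\textbf{Step 1 (low-order bound).} Set $m:=\frac{p}{2}+1$. Integrating by parts and using $|\na\cdot(|\na\vp|^{m-2}\na\vp)|\le (m-2+\sqrt n)|\na\vp|^{m-2}|D^2\vp|$, I get
\[
\io|\na\vp|^{m}=-\io\vp\,\na\cdot\big(|\na\vp|^{m-2}\na\vp\big)\le c\io|\vp|\,|\na\vp|^\frac{p-2}{2}\,|\na\vp|^\frac{p-2}{2}|D^2\vp| .
\]
With the choice $m-2=\frac{p-2}{2}$ the Cauchy–Schwarz inequality closes this directly as
\[
\io|\na\vp|^{\frac{p}{2}+1}\le c\,\|\vp\|_{L^2(\Om)}\,I^\frac12(\vp),
\]
provided $\io|\na\vp|^{p-2}\vp^2$ is handled. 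For that I would use Hölder's inequality together with Lemma \ref{lem2} and a Poincaré bound. If it is cleaner, an alternative is to take $m$ slightly different, so that the leftover factor is $\io|\na\vp|^{2m-2-p}$, and absorb it by Young's inequality. Either way, the outcome is a bound of the form $\|w\|_{L^{s}(\Om)}^{s}\le c\|\vp\|_{L^2(\Om)}I^\frac12(\vp)$ with $s=\frac{p+2}{p}\ge1$.

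\textbf{Step 2 (Gagliardo–Nirenberg).} Since $r\le\frac{np}{(n-2)_+}$, we have $\frac{2r}{p}\le\frac{2n}{(n-2)_+}$. I therefore apply
\[
\|w\|_{L^\frac{2r}{p}}\le c\|\na w\|_{L^2}^{\theta}\|w\|_{L^s}^{1-\theta}+c\|w\|_{L^s},
\]
with the scaling-determined exponent $\theta$. I square this and insert $\|\na w\|_{L^2}^2\le cI$ and the bound from Step 1. The first resulting term should then reproduce precisely $I^{\frac{1+\frac n2-\frac nr}{D}}\|\vp\|_{L^2}^{\frac{2-n+\frac{np}{r}}{D}}$, where $D=1+\frac n2-\frac{n-2}{p}$. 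I would verify this by a short bookkeeping of the scaling exponents; the consistency checks are that the two exponents of $I$ and of $\io\vp^2$ combine homogeneously, and that at $r=p$ the two terms of (\ref{311.1}) coincide.

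\textbf{Step 3 (lower-order term; main obstacle).} The remainder $c\|w\|_{L^s}^2$ does not directly carry the exponents of the second summand in (\ref{311.1}). I would treat it by running Step 2 once more with $r$ replaced by $p$. Concretely, I estimate $\|w\|_{L^s}\le c\|w\|_{L^2}$ and then apply the same interpolation to $\|w\|_{L^2}^2=\|\na\vp\|_{L^p}^p$. The exponent in the pure lower-order contribution is then corrected using Lemma \ref{lem2}, namely $\io|\na\vp|^p\le cI(\vp)$, combined with Step 1. Matching these powers exactly to $\frac{1+\frac n2-\frac np}{D}$ and $\frac1D$ is the delicate point, and I expect it to be the main obstacle. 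If a direct match fails, I would instead interpolate $\|w\|_{L^s}$ between the Step 1 bound and $\|w\|_{L^2}^2\le cI$, choosing the interpolation weight so that the resulting powers of $I$ and $\io\vp^2$ sum homogeneously to the required ones.
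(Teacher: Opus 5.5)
Your argument works, but it takes a different route from the paper. The paper first proves the case $r=p$:
\begin{itemize}
\item it tests with the full weight $|\nabla\varphi|^{p-2}\nabla\varphi$;
\item it splits the leftover $\int_\Omega\varphi^2|\nabla\varphi|^{p-2}$ by H\"older into $\|\varphi\|_{L^p}\|\nabla\varphi\|_{L^p}^{(p-2)/2}$;
\item it interpolates $\|\varphi\|_{L^p}$ between $\|\nabla\varphi\|_{L^p}$ and $\|\varphi\|_{L^2}$, and absorbs the resulting power of $\|\nabla\varphi\|_{L^p}$.
\end{itemize}
Only afterwards does it apply Gagliardo--Nirenberg to $w=|\nabla\varphi|^{p/2}$ with base space $L^2$, feeding the $r=p$ bound into the remainder $\|w\|_{L^2}^2$.

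You test instead with the lower weight $|\nabla\varphi|^{\frac{p-2}{2}}\nabla\varphi$, where Cauchy--Schwarz closes immediately:
\[
\int_\Omega|\nabla\varphi|^{\frac p2+1}\le\Big(\frac{p-2}{2}+\sqrt n\Big)\int_\Omega|\varphi|\,|\nabla\varphi|^{\frac{p-2}{2}}|D^2\varphi|\le c\,\|\varphi\|_{L^2(\Omega)}\,I^{\frac12}(\varphi).
\]
A single Gagliardo--Nirenberg step with base space $L^s$, $s=\frac{p+2}{p}$, then gives the first summand for every admissible $r$ at once. With $D:=1+\frac n2-\frac{n-2}{p}$ one finds $\theta=\frac{n(2r-p-2)}{2rD}$. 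This lies in $[0,1]$ because $r\ge p\ge 2$ and $r\le\frac{np}{(n-2)_+}$. Moreover $\theta+\frac{(1-\theta)p}{p+2}=\frac{1+\frac n2-\frac nr}{D}$ and $\frac{(1-\theta)p}{p+2}=\frac{1-\frac n2+\frac{np}{2r}}{D}$, exactly as required.

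Your route avoids the case split, the interpolation of $\varphi$ itself and the absorption step. The price is that it uses Lemma \ref{lem2} for the lower-order term. The paper's proof does not need Lemma \ref{lem2}, and it produces the $r=p$ estimate as a standalone intermediate result.

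Two slips need fixing.
\begin{itemize}
\item In Step 1, your displayed integrand has a spurious extra factor $|\nabla\varphi|^{\frac{p-2}{2}}$; that is the paper's weight, not yours. The proviso about $\int_\Omega|\nabla\varphi|^{p-2}\varphi^2$ should be dropped. Bounding that term via H\"older, Lemma \ref{lem2} and Poincar\'e would in fact lose the factor $\|\varphi\|_{L^2}$.
\item In Step 3, rerunning Step 2 with $r=p$ is circular, since it only regenerates the remainder $\|w\|_{L^s}^2$. Your fallback is the correct choice.
\end{itemize}
To carry out the fallback, write $\|w\|_{L^s}^2=\|w\|_{L^s}^{2\alpha}\|w\|_{L^s}^{2(1-\alpha)}$ with $\alpha:=\frac{p+2}{pD}\in(0,1]$. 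Bound the first factor by Step 1. Bound the second by $\|w\|_{L^s}\le c\|w\|_{L^2}$ together with $\|w\|_{L^2}^2\le cI(\varphi)$. This yields $c\,I^{1-\frac{2}{pD}}(\varphi)\big\{\int_\Omega\varphi^2\big\}^{\frac1D}$, and indeed $1-\frac{2}{pD}=\frac{1+\frac n2-\frac np}{D}$.
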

\proof
  We first consider the case $r=p$, in which for fixed $\vp$ as indicated, we integrate by parts and use the fact that 
  $|\Del\vp|\le \sqrt{n}|D^2\vp|$ along with the Cauchy-Schwarz inequality to see on abbreviating 
  $I(\vp):=\io |\na\vp|^{p-2} |D^2\vp|^2$ that
  \bea{311.02}
	\|\na\vp\|_{L^p(\Om)}^p
	&=& \io |\na\vp|^{p-2} \na\vp\cdot\na\vp \nn\\
	&=& - \io \vp\cdot \Big\{ (p-2) |\na\vp|^{p-4} \na\vp\cdot (D^2\vp\cdot\na\vp) + |\na\vp|^{p-2} \Del\vp\Big\} \nn\\
	&\le& (p-2+\sqrt{n}) \io |\vp| \cdot |\na\vp|^{p-2} |D^2\vp| \nn\\
	&\le& (p-2+\sqrt{n}) I^\frac{1}{2}(\vp) \cdot \bigg\{ \io \vp^2 |\na\vp|^{p-2}\bigg\}^\frac{1}{2}.
  \eea
  Here, by the H\"older inequality,
  \be{311.2}
	\bigg\{ \io \vp^2 |\na\vp|^{p-2}\bigg\}^\frac{1}{2}
	\le \bigg\{ \io |\vp|^p \bigg\}^\frac{1}{p} \cdot \bigg\{ \io |\na\vp|^p \bigg\}^\frac{p-2}{2p},
  \ee
  whence taking $c_1=c_1(p)>0$ such that in line with a Gagliardo-Nirenberg inequality we have
  \bas
	\|\psi\|_{L^p(\Om)}
	\le c_1 \|\na\psi\|_{L^p(\Om)}^\frac{\frac{n}{2}-\frac{n}{p}}{1+\frac{n}{2}-\frac{n}{p}} 
		\|\psi\|_{L^2(\Om)}^\frac{1}{1+\frac{n}{2}-\frac{n}{p}}
	\qquad \mbox{for all } \psi\in W_0^{1,p}(\Om),
  \eas
  we all in all infer from (\ref{311.02}) and (\ref{311.2}) that, with $J(\vp):=\|\vp\|_{L^2(\Om)}^2$,
  \bas
	\|\na\vp\|_{L^p(\Om)}^p
	&\le& (p-2+\sqrt{n}) \cdot \Big\{ c_1 I^\frac{1}{2}(\vp) 
		\|\na\vp\|_{L^p(\Om)}^\frac{\frac{n}{2}-\frac{n}{p}}{1+\frac{n}{2}-\frac{n}{p}}
		\cdot J^{\frac{1}{2}\cdot\frac{1}{1+\frac{n}{2}-\frac{n}{p}}}(\vp) \Big\} \cdot \|\na\vp\|_{L^p(\Om)}^\frac{p-2}{2} \\
	&=& c_1\cdot (p-2+\sqrt{n}) I^\frac{1}{2}(\vp) J^{\frac{1}{2}\cdot \frac{1}{1+\frac{n}{2}-\frac{n}{p}}}(\vp)
		\cdot \|\na\vp\|_{L^p(\Om)}^\frac{\frac{p}{2}+\frac{np}{4}-1-\frac{n}{2}}{1+\frac{n}{2}-\frac{n}{p}},
  \eas
  that is,
  \bas
	\|\na\vp\|_{L^p(\Om)}^{p-\frac{\frac{p}{2}+\frac{np}{4}-1-\frac{n}{2}}{1+\frac{n}{2}-\frac{n}{p}}}
	\le c_1\cdot (p-2+\sqrt{n}) I^\frac{1}{2}(\vp) J^{\frac{1}{2}\cdot \frac{1}{1+\frac{n}{2}-\frac{n}{p}}}(\vp).
  \eas
  Since
  \bas
	p-\frac{\frac{p}{2}+\frac{np}{4}-1-\frac{n}{2}}{1+\frac{n}{2}-\frac{n}{p}}
	= \frac{p+\frac{np}{2}-n-\frac{p}{2}-\frac{np}{4}+1+\frac{n}{2}}{1+\frac{n}{2}-\frac{n}{p}}
	= \frac{1}{2} \cdot \frac{(1+\frac{n}{2})p -n+2}{1+\frac{n}{2}-\frac{n}{p}} 
	> 0,
  \eas
  this is equivalent to the inequality
  \be{311.4}
	\|\na\vp\|_{L^p(\Om)}^p
	\le c_2 I^\frac{1+\frac{n}{2}-\frac{n}{p}}{1+\frac{n}{2}-\frac{n-2}{p}}(\vp)
		J^\frac{1}{1+\frac{n}{2}-\frac{n-2}{p}}(\vp)
  \ee
  with $c_2\equiv c_2(p):=\big\{ c_1\cdot (p-2+\sqrt{n})\big\}^{2(1+\frac{n}{2}-\frac{n}{p})/(1+\frac{n}{2}-\frac{n-2}{p})}$,
  and thereby establishes (\ref{311.1}) in this special case.\abs
  If $r>p$ is such that $r\le\frac{np}{(n-2)_+}$, however, the corresponding version of (\ref{311.1}) results from (\ref{311.4})  
  upon another application of the Gagliardo-Nirenberg inequality:
  Indeed, according to the latter it follows from these restrictions on $r$ that we can find $c_3=c_3(p,r)>0$ fulfilling
  \bas
	\|\psi\|_{L^\frac{2r}{p}(\Om)}^2
	\le c_3 \|\na\psi\|_{L^2(\Om)}^\frac{n(r-p)}{r} \|\psi\|_{L^2(\Om)}^\frac{2r-nr+np}{r}
	+ c_3 \|\psi\|_{L^2(\Om)}^2
	\qquad \mbox{for all } \psi\in W^{1,2}(\Om),
  \eas
  which in conjunction with (\ref{311.4}) and the fact that
  \bas
	\Big\| \na |\na\vp|^\frac{p}{2}\Big\|_{L^2(\Om)}^2
	&=& \io \Big| \frac{p}{2} |\na\vp|^\frac{p-4}{2} D^2 \vp\cdot\na\vp\Big|^2 \\
	&\le& \frac{p^2}{4} \io |\na\vp|^{p-2} |D^2\vp|^2
	= \frac{p^2}{4} I(\vp)
  \eas
  shows that
  \bas
	\|\na\vp\|_{L^r(\Om)}^p
	&=& \Big\| |\na\vp|^\frac{p}{2}\Big\|_{L^\frac{2r}{p}(\Om)}^2 \\
	&\le& c_3 \Big\| \na |\na\vp|^\frac{p}{2}\Big\|_{L^2(\Om)}^\frac{n(r-p)}{r} 
		\Big\| |\na\vp|^\frac{p}{2}\Big\|_{L^2(\Om)}^\frac{2r-nr+np}{r}
		+ c_3 \Big\| |\na\vp|^\frac{p}{2}\Big\|_{L^2(\Om)}^2 \\
	&\le& c_3\cdot\Big(\frac{p^2}{4}\Big)^\frac{n(r-p)}{2r} I^\frac{n(r-p)}{2r}(\vp) 
		\|\na\vp\|_{L^p(\Om)}^\frac{p(2r-nr+np)}{2r}
		+ c_3 \|\na\vp\|_{L^p(\Om)}^p \\
	&\le& c_3\cdot\Big(\frac{p^2}{4}\Big)^\frac{n(r-p)}{r} I^\frac{n(r-p)}{2r}(\vp) \cdot
		\bigg\{ c_2 I^\frac{1+\frac{n}{2}-\frac{n}{p}}{1+\frac{n}{2}-\frac{n-2}{p}}(\vp)
		J^\frac{1}{1+\frac{n}{2}-\frac{n-2}{p}}(\vp)\bigg\}^\frac{2r-nr+np}{2r}  \\
	& & + c_3 c_2 I^\frac{1+\frac{n}{2}-\frac{n}{p}}{1+\frac{n}{2}-\frac{n-2}{p}}(\vp) 
		J^\frac{1}{1+\frac{n}{2}-\frac{n-2}{p}}(\vp).
  \eas
  Since a straightforward computation confirms that
  \bas
	\frac{n(r-p)}{2r} 
	+ \frac{1+\frac{n}{2}-\frac{n}{p}}{1+\frac{n}{2}-\frac{n-2}{p}} \cdot \frac{2r-nr+np}{2r}
	= \frac{1+\frac{n}{2}-\frac{n}{r}}{1+\frac{n}{2}-\frac{n-2}{p}},
  \eas
  and since clearly
  \bas
	\frac{1}{1+\frac{n}{2}-\frac{n-2}{p}} \cdot \frac{2r-nr+np}{2r} 
	= \frac{1-\frac{n}{2}+\frac{np}{2r}}{1+\frac{n}{2}-\frac{n-2}{p}},
  \eas
  this completes the proof.
\qed
\begin{cor}\label{cor312}
  Let $p\ge 2$ and $r\ge p$ be such that $r<\frac{np}{(n-2)_+}$.
  Then for each $\eps>0$ there exists $\Gamma_1(\eps,p,r)>0$ with the property that whenever $\vp\in C^2(\bom)$
  satisfies $\vp=0$ on $\pO$,
  \be{312.1}
	\|\na\vp\|_{L^r(\Om)}^p
	\le \eps \io |\na\vp|^{p-2} |D^2\vp|^2
	+ \Gamma_1(\eps,p,r) \cdot \bigg\{ \io \vp^2 \bigg\}^\frac{p}{2}.
  \ee
\end{cor}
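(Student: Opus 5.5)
The plan is to derive (\ref{312.1}) from Lemma \ref{lem311} by applying Young's inequality to each of the two summands on the right of (\ref{311.1}). Throughout, write $I:=\io |\na\vp|^{p-2}|D^2\vp|^2$, $J:=\io\vp^2$ and $D:=1+\frac{n}{2}-\frac{n-2}{p}$. Note that $D>0$ because $p\ge 2$; indeed $D\ge 1+\frac n2-\frac{n-2}{2}=2$.

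For the first summand, the exponent of $I$ is $\theta_1:=\frac{1+\frac n2-\frac nr}{D}$. The strict restriction $r<\frac{np}{(n-2)_+}$ means $\frac nr>\frac{n-2}{p}$ when $n\ge 2$, and $\frac nr>0>\frac{n-2}{p}$ when $n=1$. In either case $\theta_1<1$. Also $\theta_1>0$, since $\frac nr\le\frac np\le \frac n2<1+\frac n2$. Young's inequality with exponents $\frac1{\theta_1}$ and $\frac1{1-\theta_1}$ then gives
\bas
	C(p,r)\, I^{\theta_1} J^{\frac{1-\frac n2+\frac{np}{2r}}{D}}
	\le \frac{\eps}{2} I + c(\eps)\, J^{\frac{1-\frac n2+\frac{np}{2r}}{D(1-\theta_1)}}.
\eas
It remains to check that the resulting power of $J$ is $\frac p2$. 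We have $D(1-\theta_1)=\frac nr-\frac{n-2}{p}$, and $\frac p2\big(\frac nr-\frac{n-2}{p}\big)=\frac{np}{2r}-\frac n2+1$, which is exactly the numerator. This homogeneity is expected, since both sides of (\ref{312.1}) scale like the $p$-th power of $\vp$.

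For the second summand, the exponent of $I$ is $\theta_2:=\frac{1+\frac n2-\frac np}{D}$, and $\theta_2\in(0,1)$ because $\frac np<1+\frac n2$ and $\frac np>\frac{n-2}{p}$. Applying Young's inequality in the same way, the power of $J$ becomes $\frac{1}{D(1-\theta_2)}=\frac{1}{2/p}=\frac p2$, as needed. Adding the two estimates yields (\ref{312.1}), with $\Gamma_1(\eps,p,r)$ taken as the sum of the two Young constants.

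The only delicate point is confirming $\theta_1<1$, which is where the strict inequality $r<\frac{np}{(n-2)_+}$ is used; at the endpoint $\theta_1=1$ and no $\eps$ could be gained. I also need to handle degenerate exponents: if $n\ge 3$ and the $J$-exponent $1-\frac n2+\frac{np}{2r}$ happens to vanish, then $\theta_1=1$, which the strict inequality excludes. Hence the $J$-exponent is positive and Young's inequality applies directly.
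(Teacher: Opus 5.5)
Your proposal is correct and follows the paper's proof: apply Young's inequality separately to the two summands of (\ref{311.1}), using that both exponents of $\io |\na\vp|^{p-2}|D^2\vp|^2$ lie in $(0,1)$, with the first strictly below $1$ precisely because $r<\frac{np}{(n-2)_+}$. Your explicit check that the resulting powers of $\io\vp^2$ equal $\frac p2$ fills in a step the paper leaves implicit. There is one harmless slip: for $n=1$ one has $D=\frac32+\frac1p$, so the claim $D\ge 2$ fails when $p>2$, but your argument only needs $D>0$, which holds trivially.
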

\proof
  Since the strict inequality $r<\frac{np}{(n-2)_+}$ ensures that $(1+\frac{n}{2}-\frac{n}{r})/(1+\frac{n}{2}-\frac{n-2}{p}) <1$,
  and since furthermore clearly also $(1+\frac{n}{2}-\frac{n}{p})/(1+\frac{n}{2}-\frac{n-2}{p}) <1$,
  this can readily be concluded upon applying Young's inequality to (\ref{311.1}).
\qed

\mysection{Local existence and extensibility}
The purpose of this section is to derive a basic statement on local existence of classical solutions to (\ref{0}),
as well as a handy criterion for their global extensibility.\abs
Our first step into this direction applies standard parabolic theory in a suitable self-map framework to
obtain the following by means of the Schauder fixed point theorem:
\begin{lem}\label{lem_loc}
  Suppose that $h\in \bigcup_{\iota>0} C^{1+\iota}_{loc} (\R^2)$ and
  $f\in \bigcup_{\iota>0} C^{1+\iota}_{loc} (\R^{2n+2})$, and that $u_0$ as well as $u_{0t}$ satisfy (\ref{init})
  and are such that
  \bas
	h(u_0(x),u_{0t}(x))>0
	\qquad \mbox{for all } x\in\bom.
  \eas
  Then there exist $\tm\in (0,\infty]$ as well as a function
  \bas
	u \in C^0([0,\tm);C^1(\bom)) \cap C^{2,1}(\bom\times (0,\tm))
  \eas
  such that
  \bas
	u_t \in C^0([0,\tm);C^1(\bom)) \cap C^{2,1}(\bom\times (0,\tm)),
  \eas
  that 
  \bas
	h\big(u(x,t),u_t(x,t)\big)>0
	\qquad \mbox{for all $x\in\bom$ and } t\in [0,\tm),
  \eas
  that (\ref{0}) is satsified in the classical sense in $\Om\times (0,\tm)$, and that
  \bea{ext}
	& & \hs{-20mm}
	\mbox{if $\tm<\infty$, \quad then \quad 
	either } 
	\liminf_{t\nearrow\tm} \inf_{x\in\Om} h\big(u(x,t),u_t(x,t)\big)=0
	\quad \mbox{or} \nn\\
	& & \hs{20mm}
	\limsup_{t\nearrow\tm} \Big\{ \|u(\cdot,t)\|_{C^{1+\iota}(\bom)} + \|u_t(\cdot,t)\|_{C^{1+\iota}(\bom)} \Big\}
	= \infty
	\quad \mbox{for all } \iota>0.
  \eea
\end{lem}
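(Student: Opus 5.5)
We plan to reformulate (\ref{0}) as a system for $u$ and $w:=u_t$ and use a Schauder fixed point argument in a space of $C^{1+\theta}$-type functions. The first equation is quasilinear parabolic in $w$:
\bas
	w_t = \frac{1}{h(u,w)}\Big\{ \Del w + a\Del u + f(u,w,\na u,\na w)\Big\}.
\eas
Together with $u(\cdot,t)=u_0+\int_0^t w(\cdot,s)ds$, this determines $u$ from $w$. We fix $\theta\in(0,\iota)$ with $\iota$ from (\ref{init}), and for small $T>0$ and $R>0$ we consider the closed convex set
\bas
	S:=\Big\{ \ovv\in C^0([0,T];C^{1+\theta}(\bom)) \ \Big| \ \|\ovv(\cdot,t)-u_{0t}\|_{C^{1+\theta}(\bom)}\le R \mbox{ for } t\in[0,T],\ \ovv=0 \mbox{ on } \pO\Big\}.
\eas
Given $\ovv\in S$, we set $\ov{u}:=u_0+\int_0^t\ovv$. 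Since $h(u_0,u_{0t})>0$ on the compact set $\bom$ and $h$ is continuous, we can choose $R$ small and $T$ small so that $h(\ov u,\ovv)\ge c_0>0$ throughout.

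We then define $\Phi(\ovv):=w$, where $w$ solves the \emph{linear} problem
\bas
	w_t = \frac{1}{h(\ov u,\ovv)}\Del w + \frac{a\Del \ov u+f(\ov u,\ovv,\na\ov u,\na\ovv)}{h(\ov u,\ovv)}
\eas
with $w=0$ on $\pO$ and $w(\cdot,0)=u_{0t}$. The coefficient $1/h(\ov u,\ovv)$ is uniformly positive and Hölder continuous in $x$. However, $\Del\ov u$ is only a distribution-type term of order $-1$ at the $C^{1+\theta}$ level, since $u_0\in C^2$ while $\int\ovv$ lies only in $C^{1+\theta}$. We therefore plan to rewrite $a\Del\ov u=\na\cdot(a\na\ov u)$ and treat it as a divergence-form source. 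Alternatively, we substitute $v=w+au$ as in the introduction, which removes $\Del u$ and gives
\bas
	h v_t=\Del v+ahw+f,
\eas
whose right-hand side only involves first derivatives. This substitution is the cleaner choice, and we will carry out the fixed point in $v$ instead, with $u$ recovered from the ODE $u_t=v-au$. Standard $L^q$ maximal regularity for nondivergence parabolic operators with continuous leading coefficient and bounded source, taking $q>n+2$, yields $v\in W^{2,1}_q$ and hence, by embedding, a bound in $C^{1+2\theta',\theta'}$. This provides compactness in $C^0([0,T];C^{1+\theta})$ and, for $T$ small, the self-map property $\Phi(S)\subset S$. Here the time continuity near $t=0$ of $v$ in $C^{1+\theta}$ uses $u_{0t}\in C^{1+\iota}$ with $\iota>\theta$, which is the main delicacy. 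Continuity of $\Phi$ follows from uniqueness for the linear problem and the compactness just obtained.

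Schauder's theorem then gives a fixed point. Regularity is upgraded as follows. Since $h,f\in C^{1+\iota}_{loc}$, the coefficients and sources are Hölder in $(x,t)$ for $t>0$, so parabolic Schauder theory gives $v\in C^{2,1}(\bom\times(0,T))$. For $u=e^{-at}u_0+\int_0^t e^{-a(t-s)}v\,ds$ we must also check $u\in C^{2,1}$. This requires $\Del u$ and hence control of $D^2$ of $\int v$, which is available because $u_0\in C^2$ and $v(\cdot,s)\in C^2$ with integrable-in-time $C^2$ norms near $s=0$; interior Schauder estimates with the weight $s^{1-\theta'/2}$ would be used here. Uniqueness of such solutions follows from a Gronwall argument on differences in $C^{1+\theta}$ norms.

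Finally, let $\tm$ be the supremum of existence times. The length $T$ of the local existence interval depends only on a positive lower bound for $h(u_0,u_{0t})$ and an upper bound for $\|u_0\|_{C^{1+\iota}}+\|u_{0t}\|_{C^{1+\iota}}$; we will restart at $t_0$ with $C^{1+\iota}$ data, needing $u(\cdot,t_0)\in C^2$ for $(\ref{init})$, which holds for $t_0>0$. If neither alternative in (\ref{ext}) held for some $\iota$, these quantities would stay bounded near $\tm$, giving a uniform $T$ and allowing extension beyond $\tm$, a contradiction. We expect the main obstacle to be the mixed parabolic–ODE structure: tracking $C^{1+\theta}$ and $C^2$ regularity of $u$, which gains no smoothing from the ODE.
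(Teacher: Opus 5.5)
\textbf{There is a genuine gap} in the step that must supply the uniform $C^{1+\alpha}$ bound up to $t=0$. This bound carries both the self-map property and the compactness. You obtain it from $L^q$ maximal regularity with $q>n+2$, i.e.\ $v\in W^{2,1}_q(\Om\times(0,T))$, followed by a H\"older embedding.

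However, $W^{2,1}_q$ regularity on $\Om\times(0,T)$ requires the initial datum to lie in the trace space $W^{2-\frac{2}{q},q}(\Om)$. This is also necessary, since every $W^{2,1}_q$ function has its time trace in that space. Your datum $v_0=u_{0t}+au_0$ is only known to be in $C^{1+\iota}(\bom)$, where the lemma allows arbitrarily small $\iota>0$. The inclusion $C^{1+\iota}(\bom)\subset W^{2-\frac{2}{q},q}(\Om)$ holds only if $2-\frac{2}{q}<1+\iota$, i.e.\ $q<\frac{2}{1-\iota}$. Combined with $q>n+2$, this forces $\iota>\frac{n}{n+2}$. So for general admissible data, $v\notin W^{2,1}_q$ near $t=0$, and the estimate you rely on is not available.

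The ``main delicacy'' at $t=0$ that you flag is therefore exactly what remains unproved. Working on $t\ge\tau>0$ does not help, because the self-map condition $\|v(\cdot,t)-v_0\|_{C^{1+\theta}(\bom)}\le R$ is needed precisely for small $t$. The same issue reappears in the prolongation step. The criterion (\ref{ext}) requires a uniform existence time in terms of $C^{1+\iota}$ norms alone, so the fact that $u(\cdot,t_0)$ is smoother for $t_0>0$ does not rescue the argument.

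What you need is a gradient H\"older estimate valid up to $t=0$ for data merely in $C^{1+\iota}$ vanishing on $\pO$, with a constant depending only on structural bounds. The paper gets this as follows.
\begin{itemize}
\item It writes the linear problem in divergence form, $z_t=\na\cdot(A_1\na z)+A_2\cdot\na z+A_3z+A_4$, with $A_1=1/h(u,\ovv-au)$.
\item Here $\na A_1$ is bounded, because $u$ and $\ovv$ are $C^1$ in space.
\item It then invokes Lieberman's $C^{1+\theta_0,\frac{1+\theta_0}{2}}$ estimate, which gives a bound $c_4$ uniform over $S$.
\item It chooses $T$ with $c_4T^{\frac{\theta_0}{2}}\le\eta$ to get the self-map property, and uses $\theta<\theta_0$ with Arzel\`a--Ascoli for compactness.
\end{itemize}
An alternative would be H\"older or interpolation-space theory for nonautonomous parabolic problems in the style of Lunardi.

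With this replacement, the rest of your plan matches the paper's proof:
\begin{itemize}
\item the substitution $v=u_t+au$, with $u$ recovered from the ODE;
\item continuity of $\Phi$ via uniqueness for the linear problem plus compactness;
\item the Schauder bootstrap for $t>0$;
\item the prolongation with $T$ depending only on a lower bound for $h(u_0,u_{0t})$ and the $C^{1+\iota}$ norms.
\end{itemize}
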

\proof
  According to a standard prolongation argument, it is sufficient to make sure that whenever $\iota\in (0,1)$ is such
  that $h\in C^{1+\iota}_{loc}(\R^2)$ and $f\in C^{1+\iota}_{loc}(\R^{2n+2})$, for each $d>0$ and $M>0$ one can find
  $T=T(d,M)\in (0,1)$ with the property that if
  $u_0\in C^2(\bom)$ and $u_{0t}\in C^{1+\iota}(\bom)$ satisfy $u_0=u_{0t}=0$ on $\pO$ as well as
  \be{l01}
	h(u_0,u_{0t})\ge d
	\quad \mbox{in } \bom
	\qquad \mbox{and} \qquad
	\|u_0\|_{C^{1+\iota}(\bom)} + \|u_{0t}\|_{C^{1+\iota}(\bom)} \le M,
  \ee
  a classical solution $u\in C^0([0,T];C^1(\bom)) \cap C^{2,1}(\bom\times (0,T))$
  fulfilling $u_t+au\in C^0(\bom\times [0,T]) \cap C^{2,1}(\bom\times (0,T))$ and 
  $h(u,u_t) > 0$ in $\bom\times (0,T)$ can be found.
%
  To achieve this, given any such $d$ and $M$ we use the continuity of $h$ and the compactness of 
  \be{l02}
	G_0:=\Big\{ (\xi,\sig)\in\R^2 \ \Big| \ |\xi|+|\sig|\le M \mbox{ and } h(\xi,\sig)\ge d\Big\},
  \ee
  as thereby implied, to fix $c_1=c_1(d,M)>0$ and $\eta_1=\eta_1(d,M)\in (0,1]$ such that writing
  \be{l03}
	G:=\Big\{ (\xi,\sig)\in\R^2 \ \Big| \ \inf_{(\xi',\sig')\in G_0} \big\{ |\xi-\xi'|+ |\sig-\sig'|\big\} \le \eta_1\Big\},
  \ee
  we have
  \be{l1}
	\frac{d}{2} \le h(\xi,\sig) \le c_1
	\qquad \mbox{for all } (\xi,\sig)\in G.
  \ee
  As also $h_\xi, h_{\sig}$ and $f$ are continuous, we can therefore find $c_2=c_2(d,M)>0$ and $c_3=c_3(d,M)>0$ such that
  \be{l2}
	\Big| \frac{h_\xi(\xi,\sig)}{h^2(\xi,\sig)}\Big|
	\le c_2
	\quad \mbox{and} \quad
	\Big| \frac{h_{\sig}(\xi,\sig)}{h^2(\xi,\sig)}\Big|
	\le c_2
	\qquad \mbox{for all } (\xi,\sig)\in G,
  \ee
  and that moreover
  \be{l3}
	\Big| \frac{f(\xi,\sig,\Xi,\Sigma)}{h(\xi,\sig)}\Big| \le c_3
	\qquad \mbox{for all $(\xi,\sig)\in G$ and $(\Xi,\Sigma)\in \R^{2n}$ such that } |\Xi|+|\Sigma|\le M_1,
  \ee
  where $M_1:=(1+a)\cdot\big\{(2+a)M+1\big\}+M+1$.
  We next 
  employ a standard result on gradient regularity in scalar parabolic equations (\cite{lieberman}) to pick
  $\theta_0=\theta_0(d,M)\in (0,\iota]$ and $c_4=c_4(d,M)>0$ with the property that whenever $T\in (0,1]$,
  $A_1\in C^1(\Om\times (0,T))$, $A_2\in C^0(\Om\times (0,T);\R^n)$, $A_3\in C^0(\Om\times (0,T))$,
  $A_4\in C^0(\Om\times (0,T))$, $z_0\in C^{1+\iota}(\bom)$ and $z\in C^0(\bom\times [0,T]) \cap L^2((0,T);W_0^{1,2}(\Om))$
  are such that $z_0|_{\pO}=0$ and
  \be{l44}
	\|z_0\|_{C^{1+\iota}(\bom)} \le M
  \ee
  as well as
  \be{l5}
	\frac{2}{d} \le A_1 \le c_1,
	\quad 
	|A_2|\le c_2 M_1,
	\quad
	|A_3| \le a
	\quad \mbox{and} \quad
	|A_4| \le a^2(M+1)+c_3
	\qquad \mbox{in } \Om\times (0,T),
  \ee
  and such that $z$ forms a weak solution, in the standard sense specified in \cite{lieberman}, of the problem
  \be{l6}
	\lball
	z_t = \na \cdot\big(A_1(x,t)\na z\big)
	+ A_2(x,t)\cdot\na z
	+ A_3(x,t) z
	+ A_4(x,t),
	\qquad & x\in\Om, \, t\in (0,T), \\[1mm]
	z(x,t)=0,
	& x\in\pO, \, t\in (0,T), \\[1mm]
	z(x,0)=z_0(x),
	& x\in\Om,
	\ear
  \ee
  it follows that $z\in C^{1+\theta_0,\frac{1+\theta_0}{2}}(\bom\times [0,T])$ with
  \be{l7}
	\|z\|_{C^{1+\theta_0,\frac{1+\theta_0}{2}}(\bom\times [0,T])} \le c_4,
  \ee
  where for definiteness we have set
  \bea{l77}
	\|\vp\|_{C^{1+\theta_0,\frac{1+\theta_0}{2}}(\bom\times [0,T])} 
	&:=&
	\|\vp\|_{L^\infty((0,T);W^{1,\infty}(\Om))}
	+ \sup_{(x,t),(x,s)\in\bom\times [0,T], (x,t)\ne (y,s)} 
		\frac{|\na \vp(x,t)-\na \vp(y,s)|}{|x-y|^{\theta_0} + |t-s|^\frac{\theta_0}{2}} \nn\\
	& & + \sup_{x\in\bom, t,s\in [0,T], t\ne s} \frac{|\vp(x,t)-\vp(x,s)|}{|t-s|^\frac{1+\theta_0}{2}}
  \eea
  for $\vp\in C^{1+\theta_0,\frac{1+\theta_0}{2}}(\bom\times [0,T])$.
  Finally choosing $\eta=\eta(d,M)\in (0,1]$ small enough such that
  \be{l8}
	(2+a)\eta \le\eta_1
  \ee
  and then $T\in (0,1]$ such that
  \be{l81}
	(1-e^{-aT})M + \big\{ 1+(1+a)M\big\}\cdot T \le \eta
  \ee
  and
  \be{l82}
	c_4 T^\frac{\theta_0}{2} \le \eta,
  \ee
  and taking any $\theta=\theta(d,M)\in (0,\theta_0)$, for a fixed pair $(u_0,u_{0t}) \in C^2(\bom)\times C^{1+\iota}(\bom)$
  fulfilling (\ref{l01}) as well as $u_0=u_{0t}=0$ on $\pO$ we introduce the closed subset
  \be{l89}
	S:=\Big\{ \vp\in X \ \Big| \ \|\vp(\cdot,t)-v_0\|_{L^\infty(\Om)} \le \eta \mbox{ and } 
		\|\na\vp(\cdot,t)-\na v_0\|_{L^\infty(\Om)} \le \eta \mbox{ for all } t\in [0,T] \Big\}
  \ee
  of the Banach space $X:=C^{1+\theta,\frac{1+\theta}{2}}(\bom\times [0,T])$, where $v_0:=u_{0t}+au_0$.
  On $S$, we define a mapping $\Phi:S\to X$ by letting $\Phi(\ovv):=v$ for $\ovv\in S$, where for any such $\ovv$ we write
  \be{l9}
	u(x,t):=e^{-at} u_0(x) + \int_0^t e^{-a(t-s)} \ovv(x,s) ds,
	\qquad (x,t)\in\bom\times [0,T],
  \ee
  and let $v$ denote the solution of the linear problem 
  \be{l10}
	\lball
	v_t = \frac{1}{h(u,\ov v-au)} \Del v + av - a^2 u + \frac{f(u,\ovv-au, \na u, \na\ovv-a\na u)}{h(u,\ovv-au)},
	\qquad & x\in\Om, \, t\in (0,T), \\[1mm]
	v(x,t)=0,
	& x\in\pO, \, t\in (0,T), \\[1mm]
	v(x,0)=v_0(x),
	& x\in\Om.
	\ear
  \ee
  In fact, this problem is uniformly parabolic, because for each $x\in\bom$ and $t\in [0,T]$ it follows from the inclusion
  $\ovv\in S$ and (\ref{l9}) that according to the inequality $\eta\le 1$ and our restriction on $T$ in (\ref{l81}) we have
  \bea{l101}
	|u(x,t)-u_0(x)|
	&=& \big| (e^{-at}-1)u_0(x)\big| + \bigg| \int_0^t e^{-a(t-s)} \ovv(x,s) ds \bigg| \nn\\
	&\le& (1-e^{-at}) |u_0(x)|
	+ \int_0^t \big( |\ovv(x,s)-v_0(x)| + |v_0(x)|\big) ds \nn\\
	&\le& (1-e^{-at})M
	+ \big\{ 1+(1+a)M\big\}\cdot T \nn\\
	&\le& \eta.
  \eea
  Therefore, namely, 
  \bas
	& & \hs{-30mm}
	|u(x,t)-u_0(x)|
	+ \big| (\ovv(x,t)-au(x,t)\big) - u_{0t}(x)\big| \\
	&=& |u(x,t)-u_0(x)|
	+ \big| (\ovv(x,t)-v_0(x)\big) - a\big( u(x,t)-u_0(x)\big)\big| \\
	&\le& (1+a) |u(x,t)-u_0(x)| + |\ovv(x,t)-v_0(x)| \\
	&\le& (1+a)\eta +\eta \\
	&\le& \eta_1
	\qquad \mbox{for all $x\in\bom$ and } t\in [0,T]
  \eas
  due to (\ref{l8}), so that since $(u_0(x),u_{0t}(x))\in G_0$ for all $x\in\bom$ by (\ref{l01}), we infer that
  \be{l100}
	\big(u(x,t),\ovv(x,t)-au(x,t)\big) \in G
	\qquad \mbox{for all $x\in\bom$ and } t\in [0,T]
  \ee
  and hence, by (\ref{l1}),
  \be{l11}
	\frac{d}{2} \le h(u,\ovv-au) \le c_1
	\qquad \mbox{in } \bom\times [0,T].
  \ee
  Since from (\ref{l9}) it clearly follows that not only $\ovv$ and $\na\ovv$ but also $u$ and $\na u$ are H\"older continuous
  in $\bom\times [0,T]$, and since thus $h(u,\ovv-au)$ and $f(u,\ovv-au,\na u, \na\ovv-a\na u)$ lie in
  $\bigcup_{\iota'\in (0,1)} C^{\iota',\frac{\iota'}{2}}(\bom\times [0,T])$, in view of the fact that $v_0\in C^{1+\iota}(\bom)$
  with $v_0|_{\pO}=0$ we may hence employ standatd parabolic theory (\cite{LSU}) to confirm that (\ref{l10}) indeed admits a weak
  solution $v\in \bigcup_{\iota'\in (0,1)} C^{\iota',\frac{\iota'}{2}}(\bom\times [0,T]) \cap L^2((0,T);W_0^{1,2}(\Om))$.
  To see that actually $v$ belongs to $S$, we note that since $|\na\ovv| \le \|\na v_0\|_{L^\infty(\Om)}+1$ in $\Om\times (0,T)$
  according to (\ref{l89}) and the inequality $\eta\le 1$, from (\ref{l9}) and the fact that $T\le 1$ it particularly follows that
  \bas
	|\na u(x,t)|
	&=& \bigg| e^{-at} \na u_0(x) + \int_0^t e^{-a(t-s)} \na \ovv(x,s) ds \bigg| \\
	&\le& |\na u_0(x)| + \int_0^t |\na \ovv(x,s)| ds \\
	&\le& \|\na u_{0}\|_{L^\infty(\Om)} + \|\na v_0\|_{L^\infty(\Om)} + 1 \\
	&\le& (2+a)M+1
	\qquad \mbox{for all $x\in\Om$ and } t\in (0,T)
  \eas
  thanks to (\ref{l1}).
  Consequently, for all $x\in\Om$ and $t\in (0,T)$ we have
  \bea{l12}
	|\na u(x,t)| + |\na\ovv(x,t)-a\na u(x,t)|
	&\le& (1+a) |\na u(x,t)| + |\na\ovv(x,t)| \nn\\
	&\le& (1+a)\cdot\big\{ (2+a) M +1\big\} + M+1
	= M_1,
  \eea
  so that in line with (\ref{l100}) we may draw on (\ref{l3}) to see that for
  \bas
	A_4(x,t):=-a^2 u + \frac{f(u,\ovv-au,\na u,\na \ovv-a\na u)}{h(u,\ovv-au)},
	\qquad (x,t)\in\Om\times (0,T),
  \eas
  we have
  $|A_4(x,t)| \le a^2(M+1)+c_3$ for all $x\in\Om$ and $t\in (0,T)$, because
  $|u(x,t)| \le |u_0(x)| + \eta \le M+1$ for all $x\in\Om$ and $t\in (0,T)$ by (\ref{l101}), (\ref{l01}) and again the restriction
  $\eta\le 1$.
  Since from (\ref{l11}), (\ref{l100}), (\ref{l2}) and (\ref{l12}) we furthermore know that
  \bas
	A_1(x,t):=\frac{1}{h(u,\ovv-au)}
	\quad \mbox{and} \quad
	A_3(x,t):=a,
	\qquad (x,t)\in\Om\times (0,T),
  \eas
  as well as
  \bas
	A_2(x,t):=\na \frac{1}{h(u,\ovv-au)}
	\equiv - \frac{h_u(u,\ovv-au)}{h^2(u,\ovv-au)} \na u 
	+ \frac{h_{u_t}(u,\ovv-au)}{h^2(u,\ovv-au)} \na (\ovv-au),
	\qquad (x,t)\in\Om\times (0,T),
  \eas
  satisfy
  \bas
	\frac{2}{d} \le A_1 \le c_1
	\quad \mbox{and} \quad
	|A_3|\le a
	\qquad \mbox{in } \Om\times (0,T)
  \eas
  as well as
  \bas
	|A_2|
	\le c_2 |\na u| + c_2 |\na\ovv-a\na u|
	\le c_2 M_1
	\qquad \mbox{in } \Om\times (0,T).
  \eas
  As moreover
  \bas
	\|v_0\|_{C^{1+\iota}(\bom)} = \|u_{0t}+au_0\|_{C^{1+\iota}(\bom)}
	\le (1+a)M
  \eas
  due to (\ref{l01}), we thus see that all the conditions in (\ref{l44}) and (\ref{l5}) are satisfied with $z_0:=v_0$, whence 
  we may draw on (\ref{l7}) to conclude that $v\in C^{1+\theta_0,\frac{1+\theta_0}{2}}(\bom\times [0,T])$, with
  $\|v\|_{C^{1+\theta_0,\frac{1+\theta_0}{2}}(\bom\times [0,T])} \le c_4$.
  As $\theta\le\theta_0$, this especially means that $v\in X$, and using the information on continuity in time
  correspondingly contained in (\ref{l7}) we particularly see that
  \bas
	|v(x,t)-v_0(x)|
	\le c_4 t^\frac{\theta_0}{2}
	\le \eta
	\quad \mbox{and} \quad
	|\na v(x,t)-\na v_0(x)| \le c_4 t^\frac{\theta_0}{2}
	\le \eta
	\qquad \mbox{for all $x\in\Om$ and } t\in (0,T)
  \eas
  according to (\ref{l77}) and (\ref{l82}).
  In consequence, $\Phi$ maps $S$ into itself, and making use of the strict inequality $\theta<\theta_0$ now, we may rely on
  the Arzel\`a-Ascoli theorem to see that (\ref{l7}) actually ensures that $\ov{\Phi(S)}$ is a compact subset of $X$.\abs
  To verify that $\Phi$ is continuous, we fix $(\ovv_i)_{j\in\N} \subset S$ and $\ovv\in S$ such that
  $\ovv_j \to \ovv$ in $X$ as $j\to\infty$, and then firstly obtain that the functions $u_j$ and $u$ accordingly defined through
  (\ref{l9}) also satisfy $u_j\to u$ in $X$ as $j\to\infty$.
  In particular, this implies that $\frac{1}{h(u_j,\ovv_j-au_j)} \to \frac{1}{h(u,\ovv-au)}$, 
  $\na \frac{1}{h(u_j,\ovv_j-au_j)} \to \na \frac{1}{h(u,\ovv-au)}$ and
  $f(u_j,\ovv_j-au_j,\na u_j,\na\ovv_j-a\na u_j) \to f(u,\ovv-au,\na u,\na\ovv-a\na u)$ in $L^\infty(\Om\times (0,T))$
  as $j\to\infty$, whence according to a standard uniqueness property of the problem class (\ref{l6}) 
  (\cite{LSU}), each accumulation point of $(\Phi(\ovv_j))_{j\in\N}$ must coincide with the solution of (\ref{l10}).
  Since $(\Phi(\ovv_j))_{j\in\N}$ is relatively compact in $X$ due to an estimate of the form in (\ref{l7}) and the inequality
  $\theta<\theta_0$, this means that, indeed, $\Phi(\ovv_j)\to\Phi(\ovv)$ in $X$ as $j\to\infty$.\abs
  Noting that $S$ is convex, we may therefore employ the Schauder fixed point theorem to infer the existence of $v\in S$
  such that $\Phi(v)=v$, and straightforward bootstrap-type arguments involving parabolic regularity theory (\cite{LSU})
  finally reveal that $v$, along with the quantity $u$ accordingly defined by (\ref{l9}), in fact forms a pair $(u,v)$
  of functions which both belong to $C^0([0,T];C^1(\bom)) \cap C^{1,2}(\bom\times (0,T))$, which satisfy
  $h(u,v-au)>0$ in $\bom\times [0,T]$, and which solve (\ref{0v}) in the classical sense in $\Om\times (0,T)$.
  The proof thereby becomes complete.
\qed
By means of parabolic regularity theory, the extensibility criterion in (\ref{ext}) can considerably be sharpened:
\begin{lem}\label{lem17}
  Let $h\in \bigcup_{\iota>0} C^{1+\iota}_{loc} (\R^2)$ and
  $f\in \bigcup_{\iota>0} C^{1+\iota}_{loc} (\R^{2n+2})$ be such that there exists $\lam>1$ with the property that for each $M>0$
  one can find $K_f(M)>0$ such that (\ref{f1}) holds.
  Then whenever $u_0\in C^2(\bom)$ and $u_{0t} \in \bigcup_{\iota>0} C^{1+\iota}(\bom)$ satisfy $u_0=u_{0t}=0$ on $\pO$,
  for $\tm$ and $u$ as correspondingly obtained in Lemma \ref{lem_loc} it follows that
  \bea{ext1}
	\hs{-2mm}
	\mbox{if $\tm<\infty$, \ then \  
	either \ } 
	\liminf_{t\nearrow\tm} \inf_{x\in\Om} h\big(u(x,t),u_t(x,t)\big)=0
	\ \mbox{or} \
	\limsup_{t\nearrow\tm} \|u_t(\cdot,t)\|_{W^{1,p}(\Om)} 
	= \infty
  \eea
  for all $p>n\cdot\max\{\lam,2\}$ fulfilling $p\ge 2\cdot\max\{\lam,2\}$.
\end{lem}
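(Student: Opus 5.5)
We argue by contradiction. Suppose $\tm<\infty$, that $h(u,u_t)\ge d>0$ in $\Om\times(0,\tm)$ for some $d>0$, and that $\sup_{t\in(0,\tm)}\|u_t(\cdot,t)\|_{W^{1,p}(\Om)}<\infty$. By (\ref{ext}) it then suffices to show that $\|u(\cdot,t)\|_{C^{1+\iota}(\bom)}+\|u_t(\cdot,t)\|_{C^{1+\iota}(\bom)}$ stays bounded on $(\tm/2,\tm)$ for some $\iota>0$.

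\textbf{Step 1: control of $u$ and $\na u$.} Since $u(\cdot,t)=u(\cdot,0)+\int_0^t u_t(\cdot,s)\,ds$ and $\tm<\infty$, we get $u\in L^\infty((0,\tm);W^{1,p}(\Om))$. Because $p>n$, Morrey's embedding gives uniform bounds for $u$ and $u_t$ in $L^\infty(\Om\times(0,\tm))$, and also a uniform H\"older bound in $x$. Thus $(u,u_t)$ ranges in a compact set $\{|\xi|+|\sig|\le M\}$, on which $h\le c_1$, $h\ge d$, and (\ref{f1}) holds with $K_f(M)$. Moreover, $\na u(\cdot,t)=\na u(\cdot,0)+\int_0^t\na u_t$ is bounded in $L^p(\Om)$.

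\textbf{Step 2: an $L^q$ bound for the forcing.} Write the equation for $v=u_t+au$ as in (\ref{l10}):
\bas
	v_t=\frac{1}{h(u,u_t)}\Del v+av-a^2u+\frac{f(u,u_t,\na u,\na u_t)}{h(u,u_t)}.
\eas
By (\ref{f1}), the forcing term is bounded by $C(1+|\na u|+|\na u_t|)^\lam$. This is bounded in $L^\infty((0,\tm);L^{p/\lam}(\Om))$, and $p/\lam>n$ since $p>n\lam$; the condition $p\ge2\lam$ also gives $p/\lam\ge2$.

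\textbf{Step 3: parabolic regularity and bootstrap.} The principal coefficient $1/h(u,u_t)$ is bounded above and below. It is H\"older continuous in $x$ uniformly in $t$ by Step 1, but its time regularity is not yet known; this is the main obstacle. I would first treat the equation in nondivergence form with merely bounded measurable coefficients. The Krylov--Safonov theory then yields a uniform H\"older bound for $v$ in $\bom\times[\tm/2,\tm)$, up to the boundary under the Dirichlet condition $v=0$. Hence $u$ and $u_t=v-au$ are H\"older continuous in time as well. Consequently $1/h(u,u_t)$ is H\"older in space-time, with modulus depending only on $d$, $M$ and the $W^{1,p}$ bound.

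With continuous leading coefficients and right-hand side in $L^\infty((0,\tm);L^{q})$, $q=p/\lam>n$, the $W^{2,1}_q$ maximal regularity estimates on the time interval $(\tm/2-\tau,\tm)$ give $v\in W^{2,1}_q$. Since $q>n$ is finite, Sobolev embedding yields the needed H\"older control of $\na v$ provided $q>n+2$ (or after one iteration). Given the Step 2 bound, a cleaner route is semigroup smoothing. Write $L_\infty L^q$ for $L^\infty((0,\tm);L^{q}(\Om))$; one then has
\bas
	\|\na v\|_{C^{\theta,\theta/2}}\le C\big(\|\text{forcing}\|_{L_\infty L^q}\big).
\eas
This produces a uniform $C^{1+\theta}$ bound for $v$ with some $\theta>0$. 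Integrating $u_t=v-au$ in time then gives the same bound for $u$, and therefore for $u_t=v-au$.

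\textbf{Conclusion.} These bounds contradict (\ref{ext}) with $\iota:=\theta$. If the H\"older time regularity of the leading coefficient cannot be obtained directly, the fallback is to work in divergence form: multiply by $h$, move $\na h\cdot\na v$ to the right-hand side, and apply De Giorgi--Nash theory. Here $\na h$ is controlled in $L^p$, with $p>n$, which is exactly what De Giorgi--Nash requires.
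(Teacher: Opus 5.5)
Your proof is correct in outline but follows a different route from the paper's.

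\textbf{How the paper argues.} The paper never needs time regularity of the leading coefficient. It writes the $v$-equation in divergence form, $v_t=\na\cdot(A_1\na v)+\wh{A}_1$ with $A_1=1/h(u,u_t)$. It absorbs the drift $\na A_1\cdot\na v$ into the source $\wh{A}_1$, which is why it works with $q=p/\max\{\lam,2\}$ rather than $p/\lam$. It notes that $A_1$ is bounded in $C^{1-\frac{n}{p}}(\bom)$ uniformly in $t$. It then quotes a single gradient-H\"older estimate of Lieberman for divergence-form problems with source in $L^\infty((0,\tm);L^q(\Om))$, $q>n$, $q\ge 2$. This gives $v\in C^{1+\theta_2,\frac{1+\theta_2}{2}}(\bom\times[0,\tm])$ in one step.

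\textbf{How yours differs.} You stay in nondivergence form, so the quadratic gradient term never enters the forcing and $q=p/\lam$ suffices. The price is a two-stage argument. First you need space--time H\"older continuity of $v$, hence of $1/h(u,u_t)$. Then you need a smoothing estimate for a nonautonomous operator with H\"older coefficients. That last estimate is a black box of the same weight as the paper's citation: with time-dependent coefficients it is evolution-operator theory of Acquistapace--Terreni type, not plain semigroup smoothing. You are right that $W^{2,1}_q$ theory alone would need $q>n+2$.

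\textbf{Repairs needed in Step 3.}
\begin{itemize}
\item Krylov--Safonov requires the right-hand side in $L^{n+1}$ in space--time. The space $L^\infty((0,\tm);L^{p/\lam}(\Om))$ is not contained in it when $n<p/\lam<n+1$, and the hypotheses allow this (e.g.\ $n=3$, $\lam=2$, $p=7$). So your fallback has to be the main route.
\item The fallback as phrased has a slip. Multiplying by $h$ produces $hv_t$, for which De Giorgi--Nash is unavailable without control of $h_t$. What you want is to keep $v_t$ and write $\frac{1}{h}\Del v=\na\cdot\big(\frac{1}{h}\na v\big)+\frac{\na h}{h^2}\cdot\na v$ with $h=h(u,u_t)$. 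The last term then lies in $L^\infty((0,\tm);L^{p/2}(\Om))$, which De Giorgi--Nash handles since $p>n$.
\item Neither heavy tool is actually needed here. Step 1 already gives a uniform $C^{1-\frac{n}{p}}$ bound in $x$ for $v$. Testing the divergence form against a normalized cutoff supported in a ball of radius $\rho$, and then optimizing in $\rho$, gives H\"older continuity in $t$ directly.
\end{itemize}
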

\proof
  Supposing on the contrary that $\tm$ be finite, but that $h(u,u_t)\ge c_1$ and 
  \be{17.01}
	\|u_t\|_{W^{1,p}(\Om)} \le c_2
	\qquad \mbox{for all } t\in (0,\tm)
  \ee
  with some $c_1>0$ and $c_2>0$ and some $p>n\cdot\max\{\lam,2\}$ such that $p\ge 2\cdot\max\{\lam,2\}$
  we could firstly conclude that for all $t\in (0,\tm)$,
  \be{17.02}
	\|u(\cdot,t)\|_{W^{1,p}(\Om)}
	= \bigg\| u_0 + \int_0^t u_t(\cdot,s) ds \bigg\|_{W^{1,p}(\Om)}
	\le c_3:=\|u_0\|_{W^{1,p}(\Om)} + c_2 \tm,
  \ee
  and that thus, by continuity of the embedding $W^{1,p}(\Om) \hra L^\infty(\Om)$ asserted by the inequality $p>n$, 
  \be{17.03}
	\|u\|_{L^\infty(\Om)}
	+ \|u_t\|_{L^\infty(\Om)}
	\le c_4
	\qquad \mbox{for all } t\in (0,\tm)
  \ee
  with some $c_4>0$.
  By continuity of $h$, $h_u$ and $h_{u_t}$, this would entail the existence of positive constants $c_5, c_6$ and $c_7$ such that
  \bas
	c_1 \le h(u,u_t) \le c_5,
	\quad 
	\Big| \frac{h_u(u,u_t)}{h^2(u,u_t)} \Big| \le c_6
	\quad \mbox{and} \quad
	\Big| \frac{h_{u_t}(u,u_t)}{h^2(u,u_t)} \Big| \le c_7
	\qquad \mbox{in } \Om\times (0,\tm),
  \eas
  while on the basis of (\ref{17.03}), an application of (\ref{f1}) would provide $c_8>0$ satisfying
  \bas
	\Big| \frac{f(u,u_t,\na u,\na u_t)}{h(u,u_t)} \Big|
	\le c_8\cdot \big(1+|\na u|+ |\na u_t|\big)^\lam
	\qquad \mbox{in } \Om\times (0,\tm).
  \eas
  In the identity
  \be{17.1}
	v_t = \na\cdot\big(A_1(x,t)\na v\big) + \wh{A}_1(x,t),
	\qquad x\in\Om, \ t\in (0,\tm),
  \ee
  according to (\ref{0}) valid for $v:=u_t+au$ with
  \be{17.2}
	A_1(x,t):=\frac{1}{h(u,u_t)}
	\quad \mbox{and} \quad
	\wh{A}_1(x,t):=\na A_1 \cdot \na (u_t+au) + au_t + \frac{f(u,u_t,\na u,\na u_t)}{h(u,u_t)},
	\ (x,t)\in \Om\times (0,\tm),
  \ee
  we thus could estimate
  \be{17.3}
	\frac{1}{c_5} \le A_1 \le \frac{1}{c_1}
	\qquad \mbox{in } \Om\times (0,\tm)
  \ee
  and 
  \be{17.33}
	|\na A_1|
	= \Big| - \frac{h_u(u,u_t)}{h^2(u,u_t)} \na u
	- \frac{h_{u_t}(u,u_t)}{h^2(u,u_t)} \na u_t \Big|
	\le c_6 |\na u| + c_7 |\na u_t|
	\qquad \mbox{in } \Om\times (0,\tm)
  \ee
  as well as
  \be{17.4}
	|\wh{A}_1|
	\le c_9\cdot \big( 1+ |\na u|^\kappa +  |\na u_t|^\kappa\big)
	\qquad \mbox{in } \Om\times (0,\tm)
  \ee
  with $\kappa:=\max\{\lam,2\}$ and some $c_9>0$.
  Therefore, writing $q:=\frac{p}{\kappa}$ and recalling (\ref{17.01}) and (\ref{17.02}), we would obtain $c_{10}>0$ and
  $c_{11}>0$ such that
  \be{17.5}
	\io |\wh{A}_1|^q
	\le c_{10} + c_{10} \io |\na u|^{q\kappa}
	+ c_{10} \io |\na u_t|^{q\kappa} 
	\le c_{11}
	\qquad \mbox{for all } t\in (0,\tm),
  \ee
  and, moreover, from (\ref{17.33}), (\ref{17.01}) and (\ref{17.02}) and the continuity of the embedding
  $W^{1,p}(\Om) \hra C^{\theta_1}(\bom)$, with $\theta_1:=1-\frac{n}{p}$ being positive since $p>n$, we could confirm that
  \be{17.6}
	\|A_1\|_{C^{\theta_1}(\bom)} \le c_{12}
	\qquad \mbox{for all } t\in (0,\tm)
  \ee
  with some $c_{12}>0$. 
  Based on (\ref{17.3}), (\ref{17.5}) and (\ref{17.6}), we could thus rely on the fact that $q>n$ and $q\ge 2$,
  as guaranteed by our hypotheses $p>n\cdot\max\{\lam,2\}$ and $p\ge 2\cdot\max\{\lam,2\}$,
  in applying a classical result on gradient regularity in parabolic equations (\cite{lieberman}) to find
  $\theta_2\in (0,1)$ such that $v\in C^{1+\theta_2,\frac{1+\theta_2}{2}}(\bom\times [0,\tm])$.
  In particular, this would provide $c_{13}>0$ such that $\|v\|_{C^{1+\theta_2}(\bom)} \le c_{13}$ for all $t\in (0,\tm)$,
  whence again using the identity $u_t=v-au$ we could infer the existence of $c_{14}>0$ such that
  $\|u\|_{C^{1+\theta_2}(\bom)} + \|u_t\|_{C^{1+\theta_2}(\bom)} \le c_{14}$ for all $t\in (0,\tm)$.
  In view of the contradiction to (\ref{ext}) thereby achieved, we may thus conclude as intended.
\qed
Throughout the sequel, we let $\tm$ and $u$ be as provided by Lemma \ref{lem_loc}, and
in order to suitably exploit the fundamental structure of (\ref{0}) with regard to the action of second-order spatial
operators appearing therein, in what follows we let
\be{v}
	v:=u_t+au,
\ee
noting that then, by (\ref{0}),
\be{0v}
	h(u,u_t)v_t = \Del v + ah(u,u_t)u_t + f(u,u_t,\na u, \na u_t)
	\qquad \mbox{in } \Om\times (0,\tm)
\ee
with $v=0$ on $\pO\times (0,\tm)$.
\mysection{A low-regularity enegy functional featuring linear dissipation}\label{sect4}
%
%
%
%
%
%
%
%
In this section aiming at the derivation of a linearly damped ODI of the form (\ref{08}), let us first record
some elementary properties of the ingredients thereof, as appearing in (\ref{06}) and (\ref{07}).
\begin{lem}\label{lem11}
  Assume (\ref{h}).
  Then there exist $\del_1=\del_1(h)\in (0,1]$ and $\Gamma_1=\Gamma_1(h)>0$ such that writing
  \be{H1}
	H^{(1)}(\xi,\sig) := \int_0^\sig \xi h(\xi,\wh{\sig}) d\wh{\sig},
	\qquad (\xi,\sig)\in\R^2,
  \ee
  and 
  \be{H2}
	H^{(2)}(\xi,\sig):=\int_0^\sig \wh{\sig} h(\xi,\wh{\sig})d\wh{\sig},
	\qquad (\xi,\sig)\in\R^2,
  \ee
  we have
  \be{11.1}
	\frac{h_0}{2} \le h(\xi,\sig) \le 2h_0
	\qquad \mbox{for all } (\xi,\sig)\in [-\del_1,\del_1]^2
  \ee
  and
  \be{11.2}
	\big| H^{(1)}(\xi,\sig)\big| 
	\le 2h_0 |\xi|\cdot |\sig|
	\qquad \mbox{for all } (\xi,\sig)\in [-\del_1,\del_1]^2
  \ee
  and
  \be{11.3}
	\frac{h_0}{4} \sig^2 \le H^{(2)}(\xi,\sig) \le h_0 \sig^2
	\qquad \mbox{for all } (\xi,\sig)\in [-\del_1,\del_1]^2
  \ee
  as well as
  \be{11.4}
	\big| H_{\xi}^{(1)}(\xi,\sig)\big| \le \Gamma_1 |\sig|
	\qquad \mbox{for all } (\xi,\sig)\in [-\del_1,\del_1]^2
  \ee
  and
  \be{11.5}
	\big|H_{\xi}^{(2)}(\xi,\sig)\big| \le \Gamma_1 \sig^2
	\qquad \mbox{for all } (\xi,\sig)\in [-\del_1,\del_1]^2,
  \ee
  where $h_0:=h(0,0)$.
\end{lem}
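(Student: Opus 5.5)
The plan is to use only the continuity of $h$ and $h_\xi$ near the origin together with $h_0=h(0,0)>0$; every estimate is local. First I will fix $\del_1\in(0,1]$ so small that $|h(\xi,\sig)-h_0|\le\frac{h_0}{2}$ for all $(\xi,\sig)\in[-\del_1,\del_1]^2$. This is possible because $h\in C^{1+\iota}_{loc}(\R^2)$ is in particular continuous. The bound (\ref{11.1}) is then immediate, since $\frac{h_0}{2}\le h\le\frac{3h_0}{2}\le 2h_0$.

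For (\ref{11.2}) and (\ref{11.3}), I note that for $(\xi,\sig)\in[-\del_1,\del_1]^2$ every intermediate point $(\xi,\wh\sig)$ with $\wh\sig$ between $0$ and $\sig$ also lies in the square. Hence (\ref{11.1}) may be applied inside the integrals in (\ref{H1}) and (\ref{H2}).

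For $H^{(1)}$, bounding the integrand by $|\xi|\cdot 2h_0$ over an interval of length $|\sig|$ gives $|H^{(1)}|\le 2h_0|\xi||\sig|$.

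For $H^{(2)}$, I treat the case $\sig\ge0$ first: then $\wh\sig\ge0$ on the interval of integration, so
\[
\frac{h_0}{2}\int_0^\sig\wh\sig\,d\wh\sig\le H^{(2)}(\xi,\sig)\le 2h_0\int_0^\sig\wh\sig\,d\wh\sig ,
\]
which is $\frac{h_0}{4}\sig^2\le H^{(2)}\le h_0\sig^2$. For $\sig<0$ I substitute $\wh\sig=-s$, so that $H^{(2)}(\xi,\sig)=\int_0^{|\sig|}s\,h(\xi,-s)\,ds$, which reduces to the same computation.

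For (\ref{11.4}) and (\ref{11.5}), I will differentiate under the integral sign. This is legitimate because $h_\xi$ is continuous, being part of the $C^{1+\iota}$ regularity. It gives
\[
H^{(1)}_\xi(\xi,\sig)=\int_0^\sig\big(h(\xi,\wh\sig)+\xi h_\xi(\xi,\wh\sig)\big)d\wh\sig,
\qquad
H^{(2)}_\xi(\xi,\sig)=\int_0^\sig\wh\sig\, h_\xi(\xi,\wh\sig)\,d\wh\sig .
\]
Next I set $c:=\max_{[-1,1]^2}|h_\xi|$, which is finite by compactness, and recall $\del_1\le1$. The first formula then yields $|H^{(1)}_\xi|\le(2h_0+c)|\sig|$, and the second yields $|H^{(2)}_\xi|\le\frac{c}{2}\sig^2$. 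Hence $\Gamma_1:=2h_0+c$ satisfies both (\ref{11.4}) and (\ref{11.5}).

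There is no genuine obstacle here. The only points needing care are the sign bookkeeping for $\sig<0$ in (\ref{11.3}) and the justification of differentiating under the integral sign, which rests on $h\in C^1$.
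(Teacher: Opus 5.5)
Your proposal is correct and follows essentially the same route as the paper. You choose $\delta_1$ by continuity of $h$ at the origin, read off (\ref{11.2}) and (\ref{11.3}) directly from the integral definitions, and bound $H^{(1)}_\xi$ and $H^{(2)}_\xi$ by differentiating under the integral sign and using a uniform bound on $|h_\xi|$. The only cosmetic difference is that you take the maximum of $|h_\xi|$ over $[-1,1]^2$ and set $\Gamma_1:=2h_0+c$, whereas the paper uses the bound on the $\delta_1$-square and a maximum of two constants; both work since $\delta_1\le 1$.
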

\proof
  By continuity of $h$ and the positivity of $h_0$, the existence of $\del_1=\del_1(h)\in (0,1]$ fulfilling (\ref{11.1})
  is evident. From (\ref{H1}) and (\ref{H2}) we thereupon readily obtain both (\ref{11.2}) and (\ref{11.3}), while using
  (\ref{11.1}) to estimate
  \bas
	\big| H^{(1)}_\xi(\xi,\sig)\big|
	&=& \bigg| \int_0^\sig h(\xi,\wh{\sig}) d\wh{\sig} + \int_0^\sig \xi h_\xi(\xi,\wh{\sig}) d\wh{\sig} \bigg| \\
	&\le& 2h_0 |\sig|
	+ \|h_\xi\|_{L^\infty((-\del_1,\del_1))} \cdot |\xi|\cdot |\sig|
  \eas
  and
  \bas
	\big| H^{(2)}_\xi(\xi,\sig)\big|
	= \bigg| \int_0^\sig \wh{\sig} h_\xi(\xi,\wh{\sig})d\wh{\sig} \bigg|
	\le \|h_\xi\|_{L^\infty((-\del_1,\del_1))} \cdot \frac{\sig^2}{2}
  \eas
  for $(\xi,\sig) \in [-\del_1,\del_1]^2$, we conclude that also (\ref{11.4}) and (\ref{11.5}) hold if we let
  $\Gamma_1\equiv \Gamma_1(h):=
  \max \Big\{ 2h_0+\|h_\xi\|_{L^\infty((-\del_1,\del_1))} \cdot \del_1 , \frac{1}{2} \|h_\xi\|_{L^\infty((-\del_1,\del_1))} \Big\}$.
\qed
In view of the mere definitions of $H^{(1)}$ and $H^{(2)}$, two standard evolution features of the viscous wave equation (\ref{0})
immediately follow from basic testing procedures:
\begin{lem}\label{lem12}
  Let $H^{(1)}$ and $H^{(2)}$ be as in Lemma \ref{lem11}. Then
  \bea{12.1}
	& & \hs{-20mm}
	\frac{d}{dt} \bigg\{ \frac{1}{2} \io |\na u|^2
	+ \io H^{(1)}(u,u_t) \bigg\} + a \io |\na u|^2 \nn\\
	&=& \io f(u,u_t,\na u,\na u_t) u
	+ \io H^{(1)}_\xi(u,u_t) u_t
  \eea
  and
  \bea{12.2}
	& & \hs{-20mm}
	\frac{d}{dt} \bigg\{ \io H^{(2)}(u,u_t) + \frac{a}{2} \io |\na u|^2 \bigg\}
	+ \io |\na u_t|^2 \nn\\
	&=& \io H^{(2)}_\xi(u,u_t) u_t
	+ \io f(u,u_t,\na u,\na u_t) u_t
  \eea
  for all $t\in (0,\tm)$.
\end{lem}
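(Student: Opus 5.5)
Both identities follow by testing the PDE in (\ref{0}) against $u$ and against $u_t$, respectively. The structural point is that $H^{(1)}$ and $H^{(2)}$ in (\ref{H1}) and (\ref{H2}) are built so that
\bas
	H^{(1)}_\sig(\xi,\sig)=\xi h(\xi,\sig)
	\quad \mbox{and} \quad
	H^{(2)}_\sig(\xi,\sig)=\sig h(\xi,\sig)
	\qquad \mbox{for all } (\xi,\sig)\in\R^2,
\eas
by the fundamental theorem of calculus. The time derivatives of $\io H^{(i)}(u,u_t)$ therefore produce exactly the tested inertia terms $\io h(u,u_t)u_{tt}\cdot u$ and $\io h(u,u_t)u_{tt}\cdot u_t$. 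They also produce the extra contributions $\io H^{(i)}_\xi(u,u_t)u_t$, which appear on the right-hand sides of (\ref{12.1}) and (\ref{12.2}).

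The first step is to justify the computations. Fix $t_0\in(0,\tm)$ and a compact interval $J\subset(0,\tm)$ around it. Lemma \ref{lem_loc} gives $u,u_t\in C^{2,1}(\bom\times(0,\tm))$. Hence $u$, $u_t$, $u_{tt}$, $\na u$, $\na u_t$, $D^2u$ and $D^2u_t$ are continuous on $\bom\times J$. Since $h\in C^1$, also $H^{(1)},H^{(2)}\in C^1(\R^2)$. So the maps $t\mapsto \io|\na u|^2$ and $t\mapsto\io H^{(i)}(u,u_t)$ are $C^1$ on $J$, and we may differentiate under the integral sign. This gives $\frac{d}{dt}\frac12\io|\na u|^2=\io\na u\cdot\na u_t$ and
\bas
	\frac{d}{dt}\io H^{(i)}(u,u_t)=\io H^{(i)}_\xi(u,u_t)u_t+\io H^{(i)}_\sig(u,u_t)u_{tt}.
\eas

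For (\ref{12.1}), multiply the PDE by $u$ and integrate. Integrations by parts have no boundary terms, since $u=0$ on $\pO$. They give
\bas
	\io h(u,u_t)u_{tt}u=-\io\na u\cdot\na u_t-a\io|\na u|^2+\io f(u,u_t,\na u,\na u_t)u .
\eas
Now replace the left-hand side by $\frac{d}{dt}\io H^{(1)}(u,u_t)-\io H^{(1)}_\xi(u,u_t)u_t$ and the first term on the right by $-\frac{d}{dt}\frac12\io|\na u|^2$. Rearranging yields (\ref{12.1}).

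For (\ref{12.2}), multiply the PDE by $u_t$ instead and use $u_t=0$ on $\pO$. The terms $\io\Del u_t\cdot u_t$ and $a\io\Del u\cdot u_t$ become $-\io|\na u_t|^2$ and $-\frac a2\frac{d}{dt}\io|\na u|^2$. The left-hand side becomes $\frac{d}{dt}\io H^{(2)}(u,u_t)-\io H^{(2)}_\xi(u,u_t)u_t$, and (\ref{12.2}) follows.

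There is no real obstacle here. The only point requiring care is the regularity bookkeeping in the first step, namely that $u_{tt}$ and the second spatial derivatives are continuous up to the boundary on compact subintervals of $(0,\tm)$. This is exactly what the inclusion $u_t\in C^{2,1}(\bom\times(0,\tm))$ from Lemma \ref{lem_loc} provides.
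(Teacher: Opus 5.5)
Your proposal is correct and follows the paper's proof essentially verbatim: both use $H^{(1)}_\sig=\xi h$ and $H^{(2)}_\sig=\sig h$ together with the chain rule to turn $\frac{d}{dt}\io H^{(i)}(u,u_t)$ into the PDE tested against $u$ resp.\ $u_t$ (plus the terms $\io H^{(i)}_\xi(u,u_t)u_t$), and then integrate by parts using $u=u_t=0$ on $\pO$. Your extra regularity justification, namely differentiating under the integral sign on compact subintervals of $(0,\tm)$ via $u,u_t\in C^{2,1}(\bom\times(0,\tm))$, is a sound elaboration of what the paper leaves implicit.
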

\proof
  Since $H^{(1)}_\sig(\xi,\sig)=\xi h(\xi,\sig)$ for $(\xi,\sig)\in\R^2$ by (\ref{H1}), using (\ref{0}) {\cb and the chain rule} we find that
  for all $t\in (0,\tm)$,
  \bas
	\frac{d}{dt} \io H^{(1)}(u,u_t)
	&=& \io H^{(1)}_\sig(u,u_t) u_{tt}
	+ \io H^{(1)}_\xi(u,u_t) u_t \\
	&=& \io uh(u,u_t) u_{tt} 
	+ \io H^{(1)}_\xi(u,u_t) u_t \\
	&=& \io u\cdot \Big\{ \Del u_t + a\Del u + f(u,u_t,\na u,\na u_t)\Big\}
	+ \io H^{(1)}_\xi(u,u_t) u_t,
  \eas
  from which (\ref{12.1}) results in a straightforward manner upo ingetrating by parts.\abs
  Likewise, combining (\ref{0}) with the identity $H^{(2)}_\sig(\xi,\sig)=\sig h(\xi,\sig)$, as asserted by (\ref{H2}) 
  for $(\xi,\sig)\in\R^2$, shows that
  \bas
	\frac{d}{dt} \io H^{(2)}(u,u_t)
	&=& \io H^{(2)}_\sig(u,u_t) u_{tt}
	+ \io H^{(2)}_\xi(u,u_t) u_t \\
	&=& \io u_t h(u,u_t) u_{tt}
	+ \io H^{(2)}_\xi(u,u_t) u_t \\
	&=& \io u_t \cdot \Big\{ \Del u_t + a\Del u + f(u,u_t,\na u,\na u_t)\Big\}
	+ \io H^{(2)}_\xi(u,u_t) u_t
  \eas
  for all $t\in (0,\tm)$, whence also (\ref{12.2}) holds.
\qed
Now under an {\em a priori} assumption of the form in (\ref{05}), we may rely on Lemma \ref{lem11}
to obtain the following by suitable linear combination of the identities in (\ref{12.1}) and (\ref{12.2}).
\begin{lem}\label{lem13}
  Let $\del_1$ be as in Lemma \ref{lem11}. Then there exist $B=B(a,h)>0$, $\Gamma_2=\Gamma_2(a,h)>0$ and $\Gamma_3=\Gamma_3(a,h)>0$
  such that if (\ref{init}) holds, and if $T\in (0,\tm]$ is such that
  \be{uut}
	|u| + |u_t| \le \del_1
	\qquad \mbox{in } \Om\times (0,T),
  \ee
  then for
  \be{F1}
	\F_1(t):=\frac{1+Ba}{2} \io |\na u|^2
	+ \io H^{(1)}(u,u_t) + B \io H^{(2)}(u,u_t),
	\qquad t\in (0,\tm),
  \ee
  we have
  \be{13.1}
	\Gamma_2 \io v^2 \le \F_1(t) \le \Gamma_3 \io |\na u|^2 + \Gamma_3 \io |\na u_t|^2
	\qquad \mbox{for all } t\in [0,T)
  \ee
  and 
  \bea{13.2}
	\F_1'(t) + \Gamma_2 \F_1(t)
	\le \Gamma_3 \io |u|\cdot |f(u,u_t,\na u,\na u_t)|
	+ \Gamma_3 \io |u_t|\cdot |f(u,u_t,\na u,\na u_t)|
	+ \Gamma_3 \io |u_t|^3
  \eea
  for all $t\in (0,T)$.
\end{lem}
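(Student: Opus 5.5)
Adding $B$ times (\ref{12.2}) to (\ref{12.1}) shows that $\F_1$ from (\ref{F1}) satisfies
\bas
	\F_1'(t) + a\io |\na u|^2 + B\io |\na u_t|^2
	= \io f\cdot(u+Bu_t) + \io \big(H^{(1)}_\xi(u,u_t)+BH^{(2)}_\xi(u,u_t)\big)u_t ,
\eas
where $f$ abbreviates $f(u,u_t,\na u,\na u_t)$. This is the identity I will work from.

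\emph{Sign and lower bound for $\F_1$.} Under (\ref{uut}), (\ref{11.2}) and (\ref{11.3}) give
\bas
	\io H^{(1)}(u,u_t) + B\io H^{(2)}(u,u_t) \ge -2h_0\io |u||u_t| + \frac{Bh_0}{4}\io u_t^2 .
\eas
The mixed term is absorbed using Young's inequality,
\bas
	2h_0|u||u_t| \le \frac{Bh_0}{8}u_t^2 + \frac{8h_0}{B}u^2 .
\eas
The resulting $u^2$-term is controlled by Poincaré's inequality, $\io u^2\le c_P\io|\na u|^2$, and this is where the choice of $B$ enters. Taking $B$ large makes $\frac{8h_0c_P}{B}\le\frac{1+Ba}{4}$, which is automatic for large $B$ since the right-hand side grows linearly in $B$. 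Then
\bas
	\F_1 \ge \frac{1+Ba}{4}\io|\na u|^2 + \frac{Bh_0}{8}\io u_t^2 .
\eas
Since $v=u_t+au$ gives $\io v^2\le 2\io u_t^2+2a^2c_P\io|\na u|^2$, the lower bound in (\ref{13.1}) follows. The upper bound comes from (\ref{11.2}), (\ref{11.3}), Young's inequality and Poincaré's inequality applied to $u_t$, which vanishes on $\pO$.

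\emph{Dissipation.} By (\ref{11.4}) and (\ref{11.5}), and using $|u_t|\le\del_1\le1$, the last integral on the right of the identity is bounded by
\bas
	\Gamma_1\io u_t^2 + B\Gamma_1\io|u_t|^3 .
\eas
Poincaré's inequality gives $\Gamma_1\io u_t^2\le \Gamma_1c_P\io|\na u_t|^2$, which is absorbed into $B\io|\na u_t|^2$ provided $B\ge 2\Gamma_1c_P$. This enlargement of $B$ is compatible with the previous step, since there $B$ only needed to be large as well. What remains on the left is a multiple of $\io|\na u|^2+\io|\na u_t|^2$, and by the upper bound in (\ref{13.1}) this dominates a multiple of $\F_1$. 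After possibly shrinking $\Gamma_2$ and enlarging $\Gamma_3$, the $f$-terms are estimated as $\Gamma_3\io|u||f|+\Gamma_3\io|u_t||f|$, which yields (\ref{13.2}).

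\emph{Main obstacle.} The only delicate point is that the indefinite term $H^{(1)}$ and the ill-signed $H^{(1)}_\xi u_t$ must both be dominated. The first is handled through the $\frac{Ba}{2}\io|\na u|^2$ term, the second through the $B\io|\na u_t|^2$ dissipation, and both requirements are met by one sufficiently large $B$ depending only on $a$ and $h$.
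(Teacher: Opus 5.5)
Your proposal is correct and follows the paper's proof essentially step by step. It uses the same combination of (\ref{12.1}) with $B$ times (\ref{12.2}), the same bounds (\ref{11.2})--(\ref{11.5}) together with Young's and Poincar\'e's inequalities, and the same condition $B\ge 2\Gamma_1 c_P$ to absorb $\io H^{(1)}_\xi(u,u_t)u_t$ into $\frac{B}{2}\io|\na u_t|^2$. The only difference is cosmetic: you split the mixed term $2h_0|u||u_t|$ into $\frac{Bh_0}{8}u_t^2$ plus a Poincar\'e-controlled part of $\frac{1+Ba}{2}\io|\na u|^2$, whereas the paper's split leads to the requirement $B\ge\frac{17h_0}{c_1}$.
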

\proof
  According to a Poincar\'e inequality, we take $c_1>0$ such that
  \be{13.3}
	c_1 \io \vp^2 \le \io |\na\vp|^2
	\qquad \mbox{for all } \vp\in W_0^{1,2}(\Om),
  \ee
  and with $h_0$ and $\Gamma_1=\Gamma_1(h)$ as in Lemma \ref{lem11}, we fix $B=B(a,h)>0$ large enough fulfilling
  \be{13.4}
  	B\ge \max\bigg\{\frac{17h_0}{c_1},  \frac{ 2\Gamma_1}{c_1}\bigg\}
  \ee
  Now assuming (\ref{init}) and (\ref{uut}) to hold with some $T\in (0,\tm]$, we may draw on (\ref{11.2}) and (\ref{11.3}) to see that
  thanks to Young's inequqlity and (\ref{13.3}),
  \bea{13.5}
	\F_1(t)
	&\ge& \frac{1+Ba}{2} \io |\na u|^2
	- 2h_0 \io |u|\cdot |u_t| 
	+ \frac{B h_0}{4} \io u_t^2 \nn\\
	&\ge& \frac{1+Ba}{2} \io |\na u|^2
	- \frac{c_1}{4} \io u^2
	+ \Big\{ \frac{B h_0}{4} - \frac{4h_0^2}{c_1}\Big\} \cdot \io u_t^2 \nn\\
	&\ge& \Big\{ \frac{(1+Ba)c_1}{2} - \frac{c_1}{4}\Big\} \cdot \io u^2
	+ \Big\{ \frac{B h_0}{4} - \frac{4h_0^2}{c_1}\Big\} \cdot \io u_t^2 \nn\\
	&\ge& c_2 \io u^2 + c_3 \io u_t^2
	\qquad \mbox{for all } t\in (0,T),
  \eea
  where the first inequality in (\ref{13.4}) ensures that not only $c_2\equiv c_2(a,h):=\frac{(1+Ba)c_1}{2} - \frac{c_1}{4}$
  but also $c_3\equiv c_3(a,h):=\frac{B h_0}{4} - \frac{4h_0^2}{c_1}$ is positive.
  Since, on the other hand, in view of (\ref{v}) and Young's inequality we have 
  \bas
	\io v^2 = \io (u_t+au)^2 \le 2\io u_t^2 + 2a^2 \io u^2
	\qquad \mbox{for all } t\in (0,T),
  \eas
  from (\ref{13.5}) we particularly obtain that
  \be{13.6}
	\F_1(t) \ge \min \Big\{ \frac{c_2}{2a^2} \, , \, \frac{c_3}{2}\Big\} \cdot \io v^2
	\qquad \mbox{for all } t\in (0,T).
  \ee
  We next combine (\ref{12.1}) with (\ref{12.2}) to compute
  \bea{13.7}
	& & \hs{-20mm}
	\F_1'(t)
	+ a \io |\na u|^2
	+ B \io |\na u_t|^2 \nn\\
	&=& \io f(u,u_t,\na u,\na u_t) u + \io H^{(1)}_\xi(u,u_t) u_t \nn\\
	& & + B \io H^{(2)}_\xi(u,u_t) u_t
	+ B \io f(u,u_t,\na u,\na u_t) u_t
	\qquad \mbox{for all } t\in (0,T),
  \eea
  and here we combine (\ref{11.1}) and (\ref{11.4}) with (\ref{13.3}) and the second inequality in (\ref{13.4}) to estimate
  \bea{13.8}
	 \io H^{(1)}_\xi(u,u_t) u_t
	&\le& \Gamma_1 \io u_t^2 \nn\\
	&\le& \frac{\Gamma_1}{c_1} \io |\na u_t|^2 \nn\\
	&\le& \frac{B}{2} \io |\na u_t|^2
	\qquad \mbox{for all } t\in (0,T).
  \eea
  Apart from that, from (\ref{11.5}) we know that
  \be{13.9}
	B \io H^{(2)}_\xi(u,u_t)u_{t} \le B\Gamma_1 \io |u_t|^3
	\qquad \mbox{for all } t\in (0,T),
  \ee
  while again due to (\ref{11.2}), (\ref{11.3}), Young's inequality and (\ref{13.3}),
  \bea{13.10}
	\F_1(t)
	&\le& \frac{1+Ba}{2} \io |\na u|^2
	+ 2h_0 \io |u|\cdot |u_t|
	+ Bh_0 \io u_t^2 \nn\\
	&\le& \frac{1+Ba}{2} \io |\na u|^2
	+ h_0 \io u^2
	+ (1+B)h_0 \io u_t^2 \nn\\
	&\le& \Big\{ 1 + \frac{Ba}{2} + \frac{h_0}{c_1}\Big\} \cdot \io |\na u|^2
	+ \frac{(1+B)h_0}{c_1} \io |\na u_t|^2
	\qquad \mbox{for all } t\in (0,T).
  \eea
  If we let $c_4\equiv c_4(a,h):=
  \min\Big\{ a\cdot \big\{1+\frac{Ba}{2}+\frac{h_0}{c_1}\big\}^{-1} \, , \, \frac{B}{2}\cdot \big\{\frac{(1+B)h_0}{c_1}\big\}^{-1}
  \Big\}$,
  then from (\ref{13.7})-(\ref{13.10}) we therefore obtain that
  \bas
	\F_1'(t) + c_4 \F_1(t) \le \io f(u,u_t,\na u,\na u_t) u
	+ B \io f(u,u_t,\na u, \na u_t) u_t
	+ B\Gamma_1 \io |u_t|^3
	\qquad \mbox{for all } t\in (0,T),
  \eas
  which together with (\ref{13.6}) and (\ref{13.10}) shows that both (\ref{13.1}) and (\ref{13.2}) hold if
  $\Gamma_2=\Gamma_2(a,h)$ is sufficiently small and $\Gamma_3=\Gamma_3(a,h)$ is suitably large.
\qed
The basic energy inequality in (\ref{13.2}) will subsequently be used through the following slightly modified variant thereof.
\begin{cor}\label{cor14}
  Let $p\ge 2$. Then there exist $\Gamma_4=\Gamma_4(p,a,h)>0$ and $\Gamma_5=\Gamma_5(p,a,h)>0$ such that if (\ref{init}) 
  and (\ref{uut}) hold with $\del_1=\del_1(h)$ as in Lemma \ref{lem11} and with some $T\in (0,\tm]$, then the function $\F_1$
  from (\ref{F1}) satisfies
  \bea{14.1}
	& & \hs{-20mm}
	\frac{d}{dt} \F_1^\frac{p}{2}(t)
	+ \Gamma_4 \cdot \F_1^\frac{p}{2}(t)
	\nn\\
	&\le& \Gamma_5 \cdot \bigg\{ \io |u| \cdot |f(u,u_t,\na u, \na u_t)| \bigg\}^\frac{p}{2}
	+ \Gamma_5 \cdot \bigg\{ \io |v|\cdot |f(u,u_t,\na u,\na u_t)|\bigg\}^\frac{p}{2} \nn\\
	& & + \Gamma_5 \cdot \bigg\{ \io |u|^3 \bigg\}^\frac{p}{2}
	+ \Gamma_5 \cdot \bigg\{ \io |v|^3 \bigg\}^\frac{p}{2}
	\qquad \mbox{for all } t\in (0,T).
  \eea
\end{cor}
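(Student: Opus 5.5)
We derive (\ref{14.1}) from (\ref{13.2}) via the chain rule. Since $\F_1\ge\Gamma_2\io v^2\ge 0$ on $[0,T)$ by (\ref{13.1}), we have $\frac{d}{dt}\F_1^{\frac{p}{2}}=\frac{p}{2}\F_1^{\frac{p-2}{2}}\F_1'$ wherever $\F_1>0$. Multiplying (\ref{13.2}) by $\frac{p}{2}\F_1^{\frac{p-2}{2}}\ge 0$ gives
\begin{equation*}
\frac{d}{dt}\F_1^{\frac{p}{2}}+\frac{p\Gamma_2}{2}\F_1^{\frac{p}{2}}\le \frac{p\Gamma_3}{2}\F_1^{\frac{p-2}{2}}\cdot\Big\{ I_1+I_2+I_3\Big\},
\end{equation*}
where $I_1:=\io|u||f|$, $I_2:=\io|u_t||f|$ and $I_3:=\io|u_t|^3$. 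Where $\F_1$ vanishes, or when $p=2$, the estimate is immediate or trivial; for $p>2$ the map $s\mapsto s_+^{p/2}$ is $C^1$, so the chain rule applies to the absolutely continuous function $\F_1$ everywhere. For each $I_j$ we apply Young's inequality with exponents $\frac{p}{p-2}$ and $\frac{p}{2}$:
\begin{equation*}
\F_1^{\frac{p-2}{2}}I_j\le \eps\F_1^{\frac{p}{2}}+C(\eps)I_j^{\frac{p}{2}}.
\end{equation*}
Choosing $\eps$ small enough that $3\eps\cdot\frac{p\Gamma_3}{2}\le\frac{p\Gamma_2}{4}$ lets us absorb these terms, leaving the decay rate $\Gamma_4:=\frac{p\Gamma_2}{4}$.

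It remains to rewrite $I_2$ and $I_3$ in terms of $u$ and $v$, using $u_t=v-au$. First, $|u_t|\le|v|+a|u|$ gives $I_2\le\io|v||f|+a\io|u||f|$. Second, $|u_t|^3\le 4(|v|^3+a^3|u|^3)$ gives $I_3\le 4\io|v|^3+4a^3\io|u|^3$. Finally, $(x+y)^{\frac{p}{2}}\le 2^{\frac{p}{2}-1}(x^{\frac{p}{2}}+y^{\frac{p}{2}})$ splits each resulting power into the four terms on the right of (\ref{14.1}), with $\Gamma_5$ depending only on $p$, $a$ and $h$.

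The only delicate point is differentiability of $\F_1^{p/2}$ at zeros of $\F_1$ when $2<p<4$. This is handled by the $C^1$ regularity of $s\mapsto s_+^{p/2}$ for $p>2$: where $\F_1=0$ the derivative of $\F_1^{p/2}$ vanishes and the inequality holds trivially. Alternatively, one could work with $(\F_1+\eps)^{p/2}$ and let $\eps\searrow0$. Everything else is a routine application of Young's inequality.
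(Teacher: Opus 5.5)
Your argument is correct and follows essentially the same route as the paper. Both multiply (\ref{13.2}) by $\frac{p}{2}\F_1^{\frac{p-2}{2}}$, use Young's inequality with exponents $\frac{p}{p-2}$ and $\frac{p}{2}$ to absorb half of the damping term, and rewrite $u_t=v-au$; the paper just does the substitution before applying Young rather than after, and leaves your chain-rule remark about zeros of $\F_1$ implicit.
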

\proof
  Suppressing the argument $(u,u_t,\na u,\na u_t)$ of $f$, in (\ref{13.2}) we use (\ref{v}) and Young's inequality to estimate
  \bea{14.2}
	& & \hs{-20mm}
	\Gamma_3 \io |u|\cdot |f|
	+ \Gamma_3 \io |u_t|\cdot |f|
	+ \Gamma_3 \io |u_t|^3 \nn\\
	&\le& I(t)
	:= (1+a) \Gamma_3 \io |u| \cdot |f|
	+ \Gamma_3 \io |v| \cdot |f|
	+ 4a^3 \Gamma_3 \io |u|^3
	+ 4\Gamma_3 \io |v|^3
  \eea
  for all $t\in (0,T)$, so that since $p\ge 2$, again by means of Young's inequality we infer from Lemma \ref{lem13} that
  \bas
	\frac{d}{dt} \F_1^\frac{p}{2}(t)
	&\le& \frac{p}{2} \F_1^\frac{p-2}{2}(t) \cdot \big\{ - \Gamma_2 \F_1(t) + I(t)\big\} \\
	&=& - \frac{p\Gamma_2}{2} \F_1^\frac{p}{2}(t)
	+ \Big\{ \frac{p\Gamma_2}{4} \F_1^\frac{p}{2}(t)\Big\}^\frac{p-2}{p} \cdot
		\big(\frac{4}{p\Gamma_2}\Big)^\frac{p-2}{p} \cdot \frac{p}{2} I(t) \\
	&\le& - \frac{p\Gamma_2}{4} \F_1^\frac{p}{2}(t)
	+ \Big(\frac{4}{p\Gamma_2}\Big)^\frac{p-2}{p} \cdot \Big(\frac{p}{2}\Big)^\frac{p}{2} I^\frac{p}{2}(t)
	\qquad \mbox{for all } t\in (0,T).
  \eas
  In view of (\ref{14.2}), after another application of Young's inequality this yields (\ref{14.1}) upon evident choices
  of $\Gamma_i=\Gamma_i(p,a,h)$ for $i\in\{4,5\}$.
\qed
\mysection{A higher-order energy functional}\label{sect5}
This section will be devoted to the core of our analysis, consisting in the inequality announced in (\ref{09})-(\ref{010}).
As a preliminary for an associated testing procedure, let is import from \cite[Lemma 3.4]{black_win_M3AS} 
a result from elementary differential calculus which provides a one-sided pointwise estimate for the boundary derivatives of gradients of functions satisfying homogeneous Dirichlet conditions,
and which will be used to control the boundary integrals in Lemma \ref{lem1}. 
The following has been imported from \cite[Lemma 3.4]{black_win_M3AS}.
\begin{lem}\label{lem_black}
  Let $\kappa\in\R$ denote the maximum of the curvatures on $\pO$. 
  Then for each $\vp\in C^2(\bom)$ fulfilling $\vp=0$ on $\pO$,
  \bas
	\frac{\partial |\na\vp|^2}{\pa \nu}
	\le 2\frac{\pa \vp}{\pa\nu} \Del \vp
	+ 2\kappa \Big| \frac{\pa\vp}{\pa\nu}\Big|^2
	\qquad \mbox{on } \pO.
  \eas
\end{lem}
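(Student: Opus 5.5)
Since this is a pointwise statement on $\pO$, I will work locally near a fixed boundary point $x_0\in\pO$. The one structural fact to exploit is that $\vp=0$ on $\pO$ forces $\na\vp=\frac{\pa\vp}{\pa\nu}\nu$ there: all tangential derivatives vanish. I will choose local coordinates in which $x_0=0$, $\nu(x_0)=e_n$, and $\pO$ is locally the graph $x_n=\psi(x')$, with $\psi(0)=0$, $\na'\psi(0)=0$, and $D^2\psi(0)$ diagonalized with eigenvalues $-\kappa_1,\dots,-\kappa_{n-1}$. The sign is chosen so that the $\kappa_i$ are the principal curvatures with respect to the outward normal, and convexity gives $\kappa_i\ge0$.

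The key steps are as follows.

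First, at $x_0$,
\[
\frac{\pa|\na\vp|^2}{\pa\nu}=2\sum_j \vp_j\vp_{jn}=2\vp_n\vp_{nn},
\]
since $\vp_j(x_0)=0$ for $j<n$.

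Second, $\Del\vp=\sum_{i<n}\vp_{ii}+\vp_{nn}$, so
\[
2\frac{\pa\vp}{\pa\nu}\Del\vp=2\vp_n\vp_{nn}+2\vp_n\sum_{i<n}\vp_{ii}.
\]
The claim therefore reduces to showing $-\vp_n\sum_{i<n}\vp_{ii}\le\kappa\vp_n^2$.

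Third, I identify the tangential second derivatives by differentiating the identity $\vp(x',\psi(x'))\equiv0$ twice in $x_i$ at $x'=0$. Because $\na'\psi(0)=0$, this gives
\[
\vp_{ii}+\vp_n\psi_{ii}=0,
\]
that is, $\vp_{ii}=\kappa_i\vp_n$. Hence
\[
-\vp_n\sum_{i<n}\vp_{ii}=-\Big(\sum_i\kappa_i\Big)\vp_n^2\le0,
\]
which is at most $\kappa\vp_n^2$ whenever $\kappa\ge0$.

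The main obstacle is getting the sign and curvature conventions right. The statement is phrased with an arbitrary real bound $\kappa$, so for nonconvex domains I have to bound $-\sum_i\kappa_i$ (minus the mean-curvature sum) from above. This is where "$\kappa$ is the maximum of the curvatures" has to be interpreted appropriately, presumably as a bound on the relevant combination of principal curvatures, possibly including a dimensional factor, as in the cited \cite[Lemma 3.4]{black_win_M3AS}. I would first fix the convention so that the computation above yields exactly $2\vp_n\Del\vp-2\big(\sum_i\kappa_i\big)\vp_n^2$, and then bound the curvature term by $2\kappa\vp_n^2$ under that convention.
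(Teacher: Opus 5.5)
Your computation is correct. It also takes a different route from the paper, because the paper gives no argument for this lemma at all and simply imports it from Lemma 3.4 of Black and Winkler.

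Your graph-coordinate calculation is a valid self-contained proof, and it gives more than the inequality: it gives the exact identity
\[
\frac{\partial|\nabla\varphi|^2}{\partial\nu}=2\frac{\partial\varphi}{\partial\nu}\,\Delta\varphi-2\Big(\sum_{i=1}^{n-1}\kappa_i\Big)\Big|\frac{\partial\varphi}{\partial\nu}\Big|^2
\qquad\mbox{on } \partial\Omega ,
\]
where the $\kappa_i$ are the principal curvatures in your convention (convex implies $\kappa_i\ge 0$). This identity shows exactly which constant is admissible. So you can stop hedging and close with a definite choice:
\[
\kappa:=\max_{\partial\Omega}\Big(-\sum_i\kappa_i\Big),
\]
that is, minus $(n-1)$ times the minimum of the mean curvature, or any larger number. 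With this $\kappa$ the lemma follows at once, and for $n=1$ there are no tangential directions, so it holds trivially.

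For the paper's purposes only the existence of such a domain-dependent constant matters. It may also be taken nonnegative, which is tacitly used in Lemma \ref{lem111} when $\kappa\int_{\partial\Omega}\frac{1}{h}|\nabla v|^p$ is bounded by $\frac{2\kappa}{h_0}\int_{\partial\Omega}|\nabla v|^p$.

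You were right to suspect a dimensional factor, and the issue is more than a sign convention. Take $\Omega=\{1<|x|<2\}\subset\R^3$ and $\varphi(x)=\chi(|x|)(|x|-1)$, where $\chi\equiv 1$ near $1$ and $\chi\equiv 0$ near $2$. On $|x|=1$ one has $\partial_\nu|\nabla\varphi|^2=0$, $\partial_\nu\varphi=-1$ and $\Delta\varphi=2$. The asserted inequality therefore reads $0\le -4+2\kappa$, which forces $\kappa\ge 2$. Yet every principal curvature of $\partial\Omega$ has modulus at most $1$. Reading $\kappa$ as the largest single principal curvature is therefore correct only in two cases: for convex domains, or for $n=2$ with the sign convention in which convex boundaries are negatively curved. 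In general the summed version above is what the statement must mean.
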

We can thereby describe a basic evolution property of the spatial $W^{1,p}$ norm of the function introduced in (\ref{v}):
\begin{lem}\label{lem1}
  Assume (\ref{init}), and suppose that $T\in (0,\tm]$ is such that
  \be{1.1}
	h(u,u_t)>0
	\qquad \mbox{in } \bom\times [0,T).
  \ee
  Then whenever $p\ge 2$,
  \bea{1.2}
	& & \hs{-12mm}
	\frac{1}{p} \frac{d}{dt} \io |\na v|^p
	+ \io \frac{1}{h(u,u_t)} |\na v|^{p-2} |D^2 v|^2 \nn\\
	&\le& \io \frac{h_u(u,u_t)}{h^2(u,u_t)} |\na v|^{p-2} \cdot \Big\{ \na u \cdot (D^2 v\cdot\na v) - (\na u\cdot\na v)\Del v
		\Big\} \nn\\
	& & + \io \frac{h_{u_t}(u,u_t)}{h^2(u,u_t)} |\na v|^{p-2} \cdot 
		\Big\{ \na v\cdot (D^2 v\cdot\na v) - a\na u\cdot (D^2 v\cdot\na v) 
		- |\na v|^2 \Del v + a(\na u\cdot\na v)\Del v \Big\} \nn\\
	& & + a \io |\na v|^p
	- a^2 \io |\na v|^{p-2} \na u\cdot\na v \nn\\
	& & - \io \frac{f(u,u_t,\na u,\na u_t)}{h(u,u_t)} |\na v|^{p-4} \cdot 
		\Big\{ (p-2)\na v\cdot (D^2 v\cdot\na v) + |\na v|^2 \Del v\Big\} \nn\\
	& & + \kappa \int_{\pO} \frac{1}{h(u,u_t)} |\na v|^p
	\qquad \mbox{for all } t\in (0,T),
  \eea
  where $\kappa$ is as in Lemma \ref{lem_black}.
\end{lem}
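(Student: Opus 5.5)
Dividing (\ref{0v}) by $h(u,u_t)$, which is positive by (\ref{1.1}), gives
\[
v_t=\frac{1}{h(u,u_t)}\Del v + au_t + \frac{f(u,u_t,\na u,\na u_t)}{h(u,u_t)},
\]
and I will test this against $-\na\cdot(|\na v|^{p-2}\na v)$. Because $v=0$ on $\pO$, we have $v_t=0$ there, so the time derivative gives exactly $\frac1p\frac{d}{dt}\io|\na v|^p$ with no boundary term. To keep every step classical I would first run the argument with $|\na v|^2+\eps$ in place of $|\na v|^2$, as in Lemma \ref{lem2}, and then let $\eps\searrow0$. The relevant regularity comes from Lemma \ref{lem_loc}: $v\in C^{2,1}$ in the interior, and the boundary traces are fine for $t>0$.

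\emph{Diffusion term.} Write $A:=1/h(u,u_t)$ and integrate $-\io A\Del v\,\na\cdot(|\na v|^{p-2}\na v)$ by parts in the form $\io \na(A\Del v)\cdot|\na v|^{p-2}\na v$ minus a boundary term. Then use the pointwise identity
\[
\na v\cdot\na\Del v=\tfrac12\Del|\na v|^2-|D^2v|^2,
\]
and integrate $\tfrac12\Del|\na v|^2$ by parts once more against $A|\na v|^{p-2}$. This yields:
\begin{itemize}
\item the good term $-\io A|\na v|^{p-2}|D^2v|^2$;
\item the good term $-\frac{p-2}{4}\io A|\na v|^{p-4}\big|\na|\na v|^2\big|^2\le0$, which I simply drop;
\item terms carrying $\na A$;
\item boundary integrals.
\end{itemize}

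\emph{Boundary integrals.} On $\pO$ we have $\na v=\frac{\pa v}{\pa\nu}\nu$, so $|\na v|^2=|\frac{\pa v}{\pa\nu}|^2$. The boundary terms combine to $\frac12\int_{\pO}A|\na v|^{p-2}\frac{\pa|\na v|^2}{\pa\nu}-\int_{\pO}A|\na v|^{p-2}\frac{\pa v}{\pa\nu}\Del v$. Lemma \ref{lem_black} bounds this by $\kappa\int_{\pO}A|\na v|^p$, which is exactly the last term in (\ref{1.2}).

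\emph{Terms with $\na A$.} By the chain rule,
\[
\na A=-\frac{h_u}{h^2}\na u-\frac{h_{u_t}}{h^2}\na u_t,\qquad \na u_t=\na v-a\na u .
\]
The two $\na A$-contributions have the form $\io|\na v|^{p-2}\{(\na A\cdot\na v)\Del v-\na A\cdot(D^2v\cdot\na v)\}$, up to sign. Inserting the expression for $\na A$ produces precisely the bracketed combinations in the first two lines of (\ref{1.2}). The main bookkeeping obstacle is tracking these signs; I would check them against (\ref{1.2}) term by term.

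\emph{Lower-order terms.} For $au_t=av-a^2u$, a single integration by parts gives $\io\na(au_t)\cdot|\na v|^{p-2}\na v=a\io|\na v|^p-a^2\io|\na v|^{p-2}\na u\cdot\na v$; the boundary term vanishes since $v=0$ on $\pO$. For $f/h$, I would not move any derivative onto $f$. Instead I keep $-\io\frac fh\,\na\cdot(|\na v|^{p-2}\na v)$ and expand
\[
\na\cdot(|\na v|^{p-2}\na v)=(p-2)|\na v|^{p-4}\na v\cdot(D^2v\cdot\na v)+|\na v|^{p-2}\Del v,
\]
which is the $f$-line of (\ref{1.2}).

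Collecting all these contributions and letting $\eps\searrow0$, which is justified because $p\ge2$ and all integrands stay bounded on $\bom\times[t_0,t]$, gives (\ref{1.2}).
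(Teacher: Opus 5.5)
Your proposal is correct and is essentially the paper's argument. Testing with $-\nabla\cdot(|\nabla v|^{p-2}\nabla v)$ is equivalent (since $v_t=0$ on $\partial\Omega$) to the paper's starting point $\int_\Omega|\nabla v|^{p-2}\nabla v\cdot\nabla v_t$. From there the steps coincide: the identity $\nabla v\cdot\nabla\Delta v=\frac12\Delta|\nabla v|^2-|D^2v|^2$, dropping the $\frac{p-2}{4}$-term, and Lemma \ref{lem_black}; your $\nabla A$-terms also produce exactly the brackets in (\ref{1.2}).

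The only difference is boundary bookkeeping. You obtain $-\int_{\partial\Omega}A|\nabla v|^{p-2}\frac{\partial v}{\partial\nu}\Delta v$ directly by integrating the diffusion term by parts. The paper instead integrates the $f$-term by parts, which yields $\int_{\partial\Omega}\frac{f}{h}|\nabla v|^{p-2}\frac{\partial v}{\partial\nu}$. It then needs the boundary relation $\Delta v=-f$ on $\partial\Omega$ (from (\ref{0v}) with $u_t=v_t=0$ there) to bring this into the form handled by Lemma \ref{lem_black}.
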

\proof
  As a consequence of (\ref{0v}) and two integrations by parts, we obtain that
  \bea{1.3}
	\frac{1}{p} \frac{d}{dt} \io |\na v|^p
	&=& \io |\na v|^{p-2} \na v\cdot 
		\na \Big\{ \frac{1}{h(u,u_t)} \Del v + av - a^2 u + \frac{f(u,u_t,\na u,\na u_t)}{h(u,u_t)}\Big\} \nn\\
	&=& \io \frac{1}{h(u,u_t)} |\na v|^{p-2} \na v\cdot\na\Del v
	+ \io |\na v|^{p-2} \Big\{ \na v\cdot\na \frac{1}{h(u,u_t)} \Big\} \Del v \nn\\
	& & + a \io |\na v|^p
	- a^2 \io |\na v|^{p-2} \na u\cdot\na v \nn\\
	& & - \io \frac{f(u,u_t,\na u,\na u_t)}{h(u,u_t)} \na \cdot \big\{ |\na v|^{p-2} \na v\big\} \nn\\
	& & + \int_{\pO} \frac{f(u,u_t,\na u,\na u_t)}{h(u,u_t)} |\na v|^{p-2} \frac{\pa v}{\pa\nu}
	\qquad \mbox{for all } t\in (0,T).
  \eea
  Here, since $\na v\cdot\na\Del v = \frac{1}{2} \Del |\na v|^2 - |D^2 v|^2$ and
  \be{1.4}
	\na |\na v|^{p-2} = \frac{p-2}{2} |\na v|^{p-4} \na |\na v|^2,
  \ee
  another integration by parts shows that
  \bea{1.5}
	& & \hs{-20mm}
	\io \frac{1}{h(u,u_t)} |\na v|^{p-2} \na v\cdot \na\Del v \nn\\
	&=& \frac{1}{2} \io \frac{1}{h(u,u_t)} |\na v|^{p-2} \Del |\na v|^2
	- \io \frac{1}{h(u,u_t)} |\na v|^{p-2} |D^2 v|^2 \nn\\
	&=& - \frac{p-2}{4} \io \frac{1}{h(u,u_t)} |\na v|^{p-4} \Big|\na |\na v|^2\Big|^2 
	- \frac{1}{2} \io |\na v|^{p-2} \na |\na v|^2 \cdot \na \frac{1}{h(u,u_t)} \nn\\
	& & + \frac{1}{2} \int_{\pO} \frac{1}{h(u,u_t)} |\na v|^{p-2} \frac{\pa |\na v|^2}{\pa\nu}
	- \io \frac{1}{h(u,u_t)} |\na v|^{p-2} |D^2 v|^2
	\qquad \mbox{for all } t\in (0,T).
  \eea
  Since in view of the identity $u_t=v-au$ we have
  \bas
	\na \frac{1}{h(u,u_t)}
	= - \frac{h_u(u,u_t)}{h^2(u,u_t)} \na u
	- \frac{h_{u_t}(u,u_t)}{h^2(u,u_t)} \na v
	+ a \frac{h_{u_t}(u,u_t)}{h^2(u,u_t)} \na u
	\qquad \mbox{in } \Om\times (0,T),
  \eas
  since $\na |\na v|^2 = 2D^2 v \cdot\na v$, and since thus
  \bas
	\na\cdot \big\{ |\na v|^{p-2} \na v\big\}
	= (p-2) |\na v|^{p-4} \na v\cdot (D^2 v \cdot \na v)
	+ |\na v|^{p-2} \Del v
  \eas
  by (\ref{1.4}), from (\ref{1.3}) and (\ref{1.5}) we obtain that
  \bea{1.6}
	& & \hs{-12mm}
	\frac{1}{p} \frac{d}{dt} \io |\na v|^p 
	+ \frac{p-2}{4} \io \frac{1}{h(u,u_t)} |\na v|^{p-4} \Big| \na |\na v|^2 \Big|^2
	+ \io \frac{1}{h(u,u_t)} |\na v|^{p-2} |D^2 v|^2 \nn\\
	&=& - \frac{1}{2} \io |\na v|^{p-2} \cdot 2(D^2 v\cdot\na v) \cdot 
		\Big\{ - \frac{h_u(u,u_t)}{h^2(u,u_t)} \na u - \frac{h_{u_t}(u,u_t)}{h^2(u,u_t)} \na v 
		+ a\frac{h_{u_t}(u,u_t)}{h^2(u,u_t)} \na u \Big\} \nn\\
	& & + \io |\na v|^{p-2} \cdot \bigg\{ \na v\cdot 
		\Big\{ - \frac{h_u(u,u_t)}{h^2(u,u_t)} \na u - \frac{h_{u_t}(u,u_t)}{h^2(u,u_t)} \na v 
		+ a\frac{h_{u_t}(u,u_t)}{h^2(u,u_t)} \na u \Big\} \bigg\} \Del v \nn\\
	& & + a \io |\na v|^p
	- a^2 \io |\na v|^{p-2} \na u\cdot\na v \nn\\
	& & - \io \frac{f(u,u_t,\na u,\na u_t)}{h(u,u_t)} 
	|\na v|^{p-4} \cdot 
		\Big\{ (p-2)\na v\cdot (D^2 v\cdot\na v) + |\na v|^2 \Del v\Big\} \nn\\
	& & + \int_{\pO} \frac{1}{h(u,u_t)} |\na v|^{p-2} \cdot 
		\Big\{ f(u,u_t,\na u,\na u_t) \frac{\pa v}{\pa\nu} + \frac{1}{2} \frac{\pa |\na v|^2}{\pa\nu}\Big\}
	\qquad \mbox{for all } t\in (0,T).
  \eea
  Using that on $\pO\times (0,\tm)$ we have $u=u_t=0$ and hence also $v=0$, and that therefore (\ref{0v}) implies that
  \bas
	0= \Del v + f(u,u_t,\na u,\na u_t)
	\qquad \mbox{on } \pO\times (0,T)
  \eas
  and thus
  \bas
	f(u,u_t,\na u,\na u_t) \frac{\pa v}{\pa\nu}
	+ \frac{1}{2} \frac{\pa |\na v|^2}{\pa\nu}
	&\le& f(u,u_t,\na u,\na u_t) \frac{\pa v}{\pa\nu} + \frac{\pa v}{\pa\nu} \Del v + \kappa \big|\frac{\pa v}{\pa\nu}\Big|^2
	= \kappa \Big|\frac{\pa v}{\pa\nu}\Big|^2 \\
	&\le& \kappa |\na v|^2
	\qquad \mbox{on } \pO\times (0,T)
  \eas
  by Lemma \ref{lem_black}, rearranging (\ref{1.6}) and using that $p\ge 2$ in dropping the second summand therein 
  leads to (\ref{1.2}).
\qed
Appropriately estimating its right-hand side expressions turns (\ref{1.2}) into an inequality which exclusively
contains first-order spatial derivatives in its forcing part.
\begin{lem}\label{lem111}
  Assume (\ref{h}), and let $p\ge 2$.
  Then there exist $\Gamma_6=\Gamma_6(p,a,h)>0$ and $\Gamma_7=\Gamma_7(p,a,h)>0$ such that if (\ref{init}) holds and
  $t\in (0,\tm]$ is such that (\ref{uut}) is satisfied with $\del_1=\del_1(h)$ as in Lemma \ref{lem11}, then for any choice of $r>2$
  such that $r\ge p$,
  \bea{111.1}
	\hs{-4mm}
	\frac{d}{dt} \io |\na v|^p 
	+ \Gamma_6 \io |\na v|^{p-2} |D^2 v|^2
	&\le& \Gamma_7 \cdot \bigg\{ \io |\na u|^r \bigg\}^\frac{p}{r}
	+ \Gamma_7 \cdot \bigg\{ \io |\na v|^r \bigg\}^\frac{p}{r}
	+ \Gamma_7 \io |\na v|^{p+2} \nn\\
	& & + \Gamma_7 \cdot \bigg\{ \io |\na u|^r \bigg\}^\frac{2}{r} \cdot 
		\bigg\{ \io |\na v|^\frac{pr}{r-2} \bigg\}^\frac{r-2}{r} \nn\\
	& & + \Gamma_7 \io |\na v|^{p-2} f^2(u,u_t,\na u,\na u_t)
	\quad \mbox{for all } t\in (0,T).
  \eea
\end{lem}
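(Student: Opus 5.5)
The plan is to start from Lemma \ref{lem1}, which applies because (\ref{uut}) together with (\ref{11.1}) gives $h(u,u_t)\ge\frac{h_0}{2}>0$. Then each term on the right of (\ref{1.2}) is estimated by Young's inequality, absorbing all second-order factors into the dissipative term
$$\io \frac{1}{h(u,u_t)}|\na v|^{p-2}|D^2v|^2 \ \ge\ \frac{1}{2h_0}\io |\na v|^{p-2}|D^2v|^2 .$$
On $[-\del_1,\del_1]^2$ the quantities $|h_u|/h^2$ and $|h_{u_t}|/h^2$ are bounded by some $c_1=c_1(h)$, and $|\Del v|\le\sqrt n|D^2v|$. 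So the first two integrals on the right of (\ref{1.2}) are bounded by
$$c_2\io |\na v|^{p-2}|D^2v|\cdot\big(|\na u|\,|\na v| + |\na v|^2\big).$$
Young's inequality with a small weight then gives
$$\frac{1}{8h_0}\io|\na v|^{p-2}|D^2v|^2 + c_3\io|\na v|^{p}|\na u|^2 + c_3\io |\na v|^{p+2}.$$
Similarly, the $f$-term is at most $(p-2+\sqrt n)\frac{2}{h_0}\io|f|\,|\na v|^{p-2}|D^2v|$. This is absorbed in the same way and leaves $c_4\io|\na v|^{p-2}f^2$, which is exactly the last summand in (\ref{111.1}).

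The remaining lower-order terms are handled as follows.
\begin{itemize}
\item For the mixed term $\io |\na v|^p|\na u|^2$, Hölder's inequality with exponents $\frac r2$ and $\frac{r}{r-2}$ (here $r>2$ is used) gives precisely $\{\io|\na u|^r\}^{2/r}\{\io|\na v|^{pr/(r-2)}\}^{(r-2)/r}$.
\item The term $a\io|\na v|^p$ is bounded by $c\{\io|\na v|^r\}^{p/r}$ via Hölder, since $r\ge p$ and $\Om$ is bounded.
\item For $a^2\io|\na v|^{p-1}|\na u|$, Young's inequality gives at most $c\io|\na v|^p+c\io|\na u|^p$, and each of these is again at most $c\{\io|\cdot|^r\}^{p/r}$.
\end{itemize}

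The step that needs the most care is the boundary integral $\kappa\int_{\pO}\frac1h|\na v|^p$. Nothing else in the right-hand side of (\ref{111.1}) is a boundary quantity, so this term has to be turned into interior quantities. I would use a trace inequality applied to $\psi:=|\na v|^{p/2}\in W^{1,2}(\Om)$:
$$\int_{\pO}\psi^2\le \eps\io|\na\psi|^2+C(\eps)\io\psi^2 .$$
Since $|\na|\na v|^{p/2}|^2\le\frac{p^2}{4}|\na v|^{p-2}|D^2v|^2$, the first part can be absorbed into the dissipation for small $\eps$. The second part, $C\io|\na v|^p$, is again bounded by $c\{\io|\na v|^r\}^{p/r}$.

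Finally, all constants depend only on $p$, $a$ and $h$ (through $h_0$, $\del_1$, bounds on $h_u,h_{u_t}$, and $\kappa$, $\Om$). Multiplying the resulting inequality by $p$ then yields (\ref{111.1}) with $\Gamma_6=\frac{p}{4h_0}$ and suitable $\Gamma_7$.
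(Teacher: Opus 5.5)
Your proposal is correct and follows the paper's proof essentially step by step: the same pointwise bounds on $h_u/h^2$, $h_{u_t}/h^2$ and $|\Delta v|\le\sqrt{n}\,|D^2v|$, the same Young absorptions into $\frac{1}{2h_0}\int_\Omega|\nabla v|^{p-2}|D^2v|^2$, and the same H\"older step with exponents $\frac r2$ and $\frac{r}{r-2}$. The only cosmetic difference is the boundary term: the paper applies the trace embedding $W^{1,1}(\Omega)\hookrightarrow L^1(\partial\Omega)$ to $|\nabla v|^p$ and then absorbs $\int_\Omega|\nabla v|^{p-1}|D^2v|$ by Young's inequality, which amounts to the same thing as your $\varepsilon$-trace inequality for $|\nabla v|^{p/2}\in W^{1,2}(\Omega)$.
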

\proof
  Since $h\in C^1([-\del_1,\del_1]^2)$, both $c_1\equiv c_1(h):=\|h_u\|_{L^\infty((-\del_1,\del_1)^2)}$ and
  $c_2\equiv c_2(h):=\|h_{u_t}\|_{L^\infty((-\del_1,\del_1)^2)}$ are finite, and using (\ref{11.1}) along with (\ref{uut}) and the
  pointwise inequality $|\Del v|\le \sqrt{n}|D^2 v|$, on the right-hand side of (\ref{1.2}) we can thereby estimate
  \bea{111.2}
	& & \hs{-20mm}
	\io \frac{h_u(u,u_t)}{h^2(u,u_t)} |\na v|^{p-2} \cdot \Big\{ \na u\cdot (D^2 v\cdot\na v) - (\na u \cdot\na v) \Del v \Big\}
		\nn\\
	&\le& \frac{4c_1}{h_0^2} \io \Big\{ |\na u| \cdot |\na v|^{p-1} |D^2 v|
	+ |\na u|\cdot |\na v|^{p-1} |\Del v| \Big\} \nn\\
	&\le& \frac{4(1+\sqrt{n})c_1}{h_0^2} \io |\na u| \cdot |\na v|^{p-1} |D^2 v|
	\qquad \mbox{for all } t\in (0,T)
  \eea
  and, similarly,
  \bea{111.3}
	& & \hs{-10mm}
	\io \frac{h_{u_t}(u,u_t)}{h^2(u,u_t)} |\na v|^{p-2} \cdot \Big\{ \na v\cdot (D^2 v\cdot\na v) - a\na u\cdot (D^2 v\cdot\na v)
		- |\na v|^2 \Del v + a(\na u\cdot\na v) \Del v \Big\} \nn\\
	&\le& \frac{4(1+\sqrt{n}) c_2}{h_0^2} \io |\na v|^p |D^2 v|
	+ \frac{4a(1+\sqrt{n})c_2}{h_0^2} \io |\na u| \cdot |\na v|^{p-1} |D^2 v|
	\qquad \mbox{for all } t\in (0,T).
  \eea
  Likewise,
  \bea{111.4}
	& & \hs{-20mm}
	- \io \frac{f(u,u_t,\na u,\na u_t)}{h(u,u_t)} |\na v|^{p-4} \cdot \Big\{ (p-2) \na v\cdot (D^2 v\cdot\na v)
		+ |\na v|^2 \Del v \Big\} \nn\\
	&\le& \frac{2(p-2+\sqrt{n})}{h_0} \io |f(u,u_t,\na u,\na u_t)| \cdot |\na v|^{p-2} |D^2 v|
	\qquad \mbox{for all } t\in (0,T),
  \eea
  while by Young's inequality,
  \be{111.5}
	- a^2 \io |\na v|^{p-2} \na u\cdot\na v
	\le a^2 \io |\na u|^p
	+ a^2 \io |\na v|^p
	\qquad \mbox{for all } t\in (0,T).
  \ee
  To control the rightmost summand in (\ref{1.2}), we rely on continuity of the boundary trace embedding from $W^{1,1}(\Om)$
  into $L^1(\pO)$ (\cite{Alt}) in fixing $c_3>0$ such that
  \bas
	\int_{\pO} |\vp| \le c_3 \io |\na\vp| + c_3 \io |\vp|
	\qquad \mbox{for all } \vp\in W^{1,1}(\Om),
  \eas
  so that, again by (\ref{11.1}),
  \bea{111.6}
	\kappa \int_{\pO} \frac{1}{h(u,u_t)} |\na v|^p
	&\le& \frac{2\kappa}{h_0} \int_{\pO} |\na v|^p \nn\\
	&\le& \frac{2\kappa c_3}{h_0} \io \Big| \na |\na v|^p \Big|
	+ \frac{2\kappa c_3}{h_0} \io |\na v|^p \nn\\
	&=& \frac{2p\kappa c_3}{h_0} \io \Big| |\na v|^{p-2} D^2 v\cdot\na v\Big|
	+ \frac{2\kappa c_3}{h_0} \io |\na v|^p \nn\\
	&\le& \frac{2p\kappa c_3}{h_0} \io |\na v|^{p-1} |D^2 v|
	+ \frac{2\kappa c_3}{h_0} \io |\na v|^p 
	\qquad \mbox{for all } t\in (0,T).
  \eea
  As (\ref{11.1}) furthermore implies that
  \bas
	\io \frac{1}{h(u,u_t)} |\na v|^{p-1} |D^2 v|^2
	\ge \frac{1}{2h_0} \io |\na v|^{p-2} |D^2 v|^2
	\qquad \mbox{for all } t\in (0,T),
  \eas
  from (\ref{1.2}) and (\ref{111.2})-(\ref{111.6}) we thus infer that
  \bea{111.7}
	\frac{d}{dt} \io |\na v|^p
	+ c_4 \io |\na v|^{p-2} |D^2 v|^2
	&\le& c_5 \io |\na u|\cdot |\na v|^{p-1} |D^2 v|
	+ c_5 \io |\na v|^p |D^2 v| \nn\\
	& & +c_5 \io |\na v|^{p-1} |D^2 v|
	+ c_5 \io |f(u,u_t,\na u,\na u_t)| \cdot |\na v|^{p-2} |D^2 v| \nn\\
	& & + c_5 \io |\na u|^p 
	+ c_5 \io |\na v|^p
	\qquad \mbox{for all } t\in (0,T)
  \eea
  with $c_4\equiv c_4(p,h):=\frac{p}{2h_0}$ and 
  $c_5\equiv c_5(p,a,h):= p\cdot \Big\{ \frac{4(1+\sqrt{n})c_1}{h_0^2} + \frac{4(a+1)(1+\sqrt{n})c_2}{h_0^2}
  + \frac{2(p+1)\kappa c_3}{h_0} + \frac{2(p-2+\sqrt{n})}{h_0} + a^2\Big\}$.
  Here we invoke Young's inequality to see that
  \bas
	c_5 \io |\na u|\cdot |\na v|^{p-1} |D^2 v|
	\le \frac{c_4}{8} \io |\na v|^{p-2} |D^2 v|^2
	+ \frac{2c_5^2}{c_4} \io |\na u|^2 |\na v|^p
  \eas
  and
  \bas
	c_5 \io |\na v|^p |D^2 v|
	\le \frac{c_4}{8} \io |\na v|^{p-2} |D^2 v|^2
	+ \frac{2c_5^2}{c_4} \io |\na v|^{p+2}
  \eas
  and
  \bas
	c_5 \io |\na v|^{p-1} |D^2 v|
	\le \frac{c_4}{8} \io |\na v|^{p-2} |D^2 v|^2
	+ \frac{2c_5^2}{c_4} \io |\na v|^p
  \eas
  as well as
  \bas
	c_5 \io |f(u,u_t,\na u,\na u_t)| \cdot |\na v|^{p-2} |D^2 v|
	\le \frac{c_4}{8} \io |\na v|^{p-2} |D^2 v|^2
	+ \frac{2c_5^2}{c_4} \io |\na v|^{p-2} f^2(u,u_t,\na u,\na u_t)
  \eas
  for all $t\in (0,T)$, whence we obtain that
  \bas
	\frac{d}{dt} \io |\na v|^p
	+ \frac{c_4}{2} \io |\na v|^{p-2} |D^2 v|^2
	&\le& c_5 \io |\na u|^p
	+ \Big\{ c_5 + \frac{2c_5^2}{c_4}\Big\} \io |\na v|^p \nn\\
	& & + \frac{2c_5^2}{c_4} \io |\na v|^{p+2}
	+ \frac{2c_5^2}{c_4} \io |\na u|^2 |\na v|^p \nn\\
	& & + \frac{2c_5^2}{c_4} \io |\na v|^{p-2} f^2 (u,u_t,\na u,\na u_t)
	\qquad \mbox{for all } t\in (0,T).
  \eas
  The proof can thus be completed by an application of the H\"older inequality in estimating
  \bas
	\io |\na u|^2 |\na v|^p
	\le \bigg\{ \io |\na u|^r \bigg\}^\frac{2}{r} \cdot
		\bigg\{ \io |\na v|^\frac{pr}{r-2}\bigg\}^\frac{r-2}{r}
  \eas
  and
  \bas
	\io |\na u|^p + \io |\na v|^p
	\le |\Om|^\frac{r-p}{r} \cdot \bigg\{ \io | \na u|^r \bigg\}^\frac{p}{r}
	+ |\Om|^\frac{r-p}{r} \cdot \bigg\{ \io | \na v|^r \bigg\}^\frac{p}{r}
  \eas
  for $t\in (0,T)$.
\qed
To prepare a control of those contributions to (\ref{111.1}) which contain the non-diffusible quantity $u$, 
let us record the following observation which is rather immediately implied by the mere definition in (\ref{v}).
We underline that thanks to our overall assumption on positivity of $a$, the resulting differential inequality
(\ref{1444.1}) again contains an absorptive linear summand.
\begin{lem}\label{lem1444}
  Assume (\ref{init}), and let $r\ge 2$ and $p>1$. Then
  \be{1444.1}
	\frac{d}{dt} \bigg\{ \io |\na u|^r \bigg\}^\frac{p}{r}
	+ a \cdot \bigg\{ \io |\na u|^r \bigg\}^\frac{p}{r}
	\le a^{1-p} \cdot \bigg\{ \io |\na v|^r \bigg\}^\frac{p}{r}
	\qquad \mbox{for all } t\in (0,\tm).
  \ee
\end{lem}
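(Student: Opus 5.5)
Since $v=u_t+au$ by (\ref{v}), we have $\na u_t = \na v - a\na u$, so $\na u$ solves, pointwise in $x$, the linear ODE $\pa_t \na u + a\na u = \na v$. The plan is to differentiate $y(t):=\io |\na u|^r$, get a linear ODI for $y^{1/r}$, and then pass to the power $p$ by the chain rule and Young's inequality.

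First, for $t\in(0,\tm)$, the regularity $u,u_t\in C^0([0,\tm);C^1(\bom))$ from Lemma \ref{lem_loc} allows differentiation under the integral. This gives
\bas
	\frac{1}{r}\, y'(t) = \io |\na u|^{r-2}\na u\cdot \na u_t = -a\io |\na u|^r + \io |\na u|^{r-2}\na u\cdot\na v .
\eas
By the H\"older inequality, the last integral is at most $y^{\frac{r-1}{r}}\|\na v\|_{L^r(\Om)}$. Hence $y'\le -ra\,y + r\,y^{\frac{r-1}{r}}\|\na v\|_{L^r(\Om)}$.

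Next set $z:=y^{\frac{p}{r}}$. Wherever $y>0$ we have $z'=\frac{p}{r}y^{\frac{p}{r}-1}y'$, so
\bas
	z' \le -pa\,z + p\, z^{\frac{p-1}{p}}\|\na v\|_{L^r(\Om)} .
\eas
Young's inequality with exponents $\frac{p}{p-1}$ and $p$ gives $p z^{\frac{p-1}{p}}w \le (p-1)a z + a^{1-p}w^p$ for $w\ge 0$, since $\big((p-1)az\big)^{\frac{p-1}{p}}\big(a^{1-p}w^p\big)^{\frac1p}\cdot p=p(p-1)^{\frac{p-1}{p}}z^{\frac{p-1}{p}}w\ge p z^{\frac{p-1}{p}}w$ when $p-1\ge1$. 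For $p<2$ the constants must be rechecked, and it may be cleaner to use the sharp weighted Young inequality $p\,AB\le (p-1)\eps^{\frac{p}{p-1}}A^{\frac{p}{p-1}}+\eps^{-p}B^p$ with $\eps^{\frac{p}{p-1}}=a$, which yields exactly the coefficients $(p-1)a$ and $a^{1-p}$. Applying this with $w=\|\na v\|_{L^r(\Om)}$ gives $z'+az\le a^{1-p}\|\na v\|_{L^r(\Om)}^p$, which is (\ref{1444.1}).

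The main obstacle is differentiability of $z$ where $y=0$ when $p<r$. If $p<r$ and $y=0$ somewhere, then $z=y^{\frac{p}{r}}$ need not be differentiable there. I would handle this by replacing $y$ with $y+\eps$, deriving the integrated form of the inequality on $(t_0,t)$, and letting $\eps\searrow0$. Alternatively, I would note that $y(t_0)=0$ forces $u(\cdot,t_0)\equiv0$, and then interpret (\ref{1444.1}) in the sense of upper Dini derivatives or in integrated form, which is all that is needed later. With these steps carried out in order, the derivative computation comes first, then the H\"older estimate, then the chain rule with Young's inequality, and finally the regularization at zeros of $y$.
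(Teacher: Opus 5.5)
Your proposal is correct and follows essentially the paper's route: differentiate $\int_\Omega|\nabla u|^r$, substitute $\nabla u_t=\nabla v-a\nabla u$, apply H\"older, and finish with the weighted Young inequality $p\,z^{\frac{p-1}{p}}w\le (p-1)az+a^{1-p}w^p$, which holds for every $p>1$. Your first justification, which uses $(p-1)az$ as one of the factors, only yields $(p-1)^2az$ and should be dropped in favour of the weighted form.

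Your concern about zeros of $y$ when $p<r$ is a point the paper passes over, but it needs no regularization. If $y(t_0)=0$, then $\nabla u(\cdot,t_0)\equiv 0$, and since $u_t\in C^0([0,T_{max});C^1(\overline{\Omega}))$ you get $\|\nabla u(\cdot,t)\|_{L^r(\Omega)}\le C|t-t_0|$. Hence $0\le z(t)\le C^p|t-t_0|^p$, so $z$ is differentiable at $t_0$ with $z'(t_0)=0$ because $p>1$, and the inequality holds there trivially.
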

\proof
  Using the identity $u_t=v-au$, we see that
  \bas
	\frac{d}{dt} \bigg\{ \io |\na u|^r\bigg\}^\frac{p}{r}
	&=& p\cdot\bigg\{ \io |\na u|^r\bigg\}^\frac{p-r}{r} \cdot \io |\na u|^{r-2} \na u\cdot\na u_t \nn\\
	&=& p\cdot \bigg\{ \io |\na u|^r\bigg\}^\frac{p-r}{r} \cdot \io |\na u|^{r-2} \na u\cdot\na v \\
	& & - pa \cdot \bigg\{ \io |\na u|^r\bigg\}^\frac{p}{r}
	\qquad \mbox{for all } t\in (0,\tm).
  \eas
  Since H\"older's and Young's inequalities imply that
  \bas
	& & \hs{-30mm}
	p\cdot \bigg\{ \io |\na u|^r\bigg\}^\frac{p-r}{r} \cdot \io |\na u|^{r-2} \na u\cdot\na v \\
	&\le& p\cdot \bigg\{ \io |\na u|^r \bigg\}^\frac{p-r}{r} \cdot 
		\bigg\{ \io |\na u|^r \bigg\}^\frac{r-1}{r} \cdot \bigg\{ \io |\na v|^r \bigg\}^\frac{1}{r} \\
	&=& pa \cdot \bigg\{ \io |\na u|^r \bigg\}^\frac{p-1}{r} \cdot 
		\Bigg\{ \frac{1}{a} \cdot \bigg\{ \io |\na v|^r \bigg\}^\frac{1}{r} \Bigg\} \\
	&\le& pa\cdot \Bigg\{ \frac{p-1}{p} \cdot \bigg\{ \bigg\{ \io |\na u|^r \bigg\}^\frac{p-1}{r} \bigg\}^\frac{p}{p-1}
		+ \frac{1}{p} \cdot \bigg\{ \frac{1}{a} \cdot \bigg\{ \io |\na v|^r \bigg\}^\frac{1}{r} \bigg\}^p \Bigg\} \\
	&=& (p-1)a \cdot\bigg\{ \io |\na u|^r \bigg\}^\frac{p}{r}
	+ a^{1-p} \cdot\bigg\{ \io |\na v|^r\bigg\}^\frac{p}{r}
	\qquad \mbox{for all } t\in (0,\tm),
  \eas
  this entails (\ref{1444.1}).
\qed
It remains to adequately exploit our hypothesis on genuinely superlinear behavior of $f$ near the origin,
as implied by (\ref{f0}), in order to expediently estimate the expressions in (\ref{14.1}) and (\ref{111.1}) containing $f$.
We first address the former of these:
\begin{lem}\label{lem133}
  Assume (\ref{h}), (\ref{f0}) and (\ref{f1}) with some $\lam>1$, let $p\ge 2$ be such that $p>n$, 
  and let $r\ge p$ be such that $r\ge\lam$.
  Then there exist $\Gamma_8=\Gamma_8(p,r,a,h,f)>0$ as well as
  $\mu_1=\mu_1(f)>1$ and $\mu_2=\mu_2(h,f)>1$
  such that if (\ref{init}) holds and (\ref{uut}) is valid with some $T\in (0,\tm]$, then
  \bea{133.1}
	& & \hs{-30mm}
	\bigg\{ \io |u|\cdot |f(u,u_t,\na u,\na u_t)| \bigg\}^\frac{p}{2}
	+ \bigg\{ \io |v|\cdot |f(u,u_t,\na u,\na u_t)| \bigg\}^\frac{p}{2} \nn\\
	&\le& \Gamma_8 \cdot \bigg\{ \io |\na u|^r \bigg\}^{\frac{p}{r}\cdot\mu_1}
	+ \Gamma_8 \cdot \bigg\{ \io |\na u|^r \bigg\}^{\frac{p}{r}\cdot\mu_2} \nn\\
	& & + \Gamma_8\cdot\bigg\{ \io |\na v|^p \bigg\}^{\mu_1}
	+ \Gamma_8 \cdot\bigg\{ \io |\na v|^p \bigg\}^{\mu_2}
	\qquad \mbox{for all } t\in (0,T).
  \eea
\end{lem}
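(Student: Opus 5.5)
The plan is to derive a pointwise two-regime bound for $f$ along trajectories obeying (\ref{uut}). Then I would integrate it against $|u|$ and $|v|$, bounding these in $L^\infty$ via the embedding $W_0^{1,p}(\Om)\hra L^\infty(\Om)$, which is available because $p>n$.

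\textbf{Step 1 (pointwise bound).} Fix $\iota\in(0,1]$ with $f\in C^{1+\iota}_{loc}(\R^{2n+2})$. Since $f(0,0,0,0)=0$ and $\na f(0,0,0,0)=0$, Taylor's formula on the unit ball around the origin gives $|f(z)|\le c_1|z|^{1+\iota}$ for $|z|\le 1$, $z=(\xi,\sig,X,Y)$. If $|\xi|,|\sig|\le\del_1\le 1$ but $|X|+|Y|\ge 1$, then (\ref{f1}) with $M=2$ gives $|f|\le 2^\lam K_f(2)(|X|+|Y|)^\lam$. Together these yield, under (\ref{uut}),
\[
 |f(u,u_t,\na u,\na u_t)|\le c_2\big(|u|+|u_t|+|\na u|+|\na u_t|\big)^{1+\iota}\chi_{\{|\na u|+|\na u_t|\le 1\}}+c_2\big(|\na u|+|\na u_t|\big)^{\lam}.
\]
After decreasing $\iota$ I may also assume $1+\iota\le\min\{2,\lam\}$. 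I then substitute $u_t=v-au$ and $\na u_t=\na v-a\na u$.

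\textbf{Step 2 (integration).} I estimate $\io|u||f|\le\|u\|_{L^\infty}\io|f|$ and $\|u\|_{L^\infty}\le c\|\na u\|_{L^p}\le c\{\io|\na u|^r\}^{1/r}$, and likewise $\|v\|_{L^\infty}\le c\{\io|\na v|^p\}^{1/p}$. For the term of order $1+\iota$ I use Hölder with $1+\iota\le 2\le p\le r$ and Poincaré. This bounds $\io(|u|+|v|+|\na u|+|\na v|)^{1+\iota}$ by $c\,(A^{1/r}+B^{1/p})^{1+\iota}$, where $A:=\io|\na u|^r$ and $B:=\io|\na v|^p$. Multiplying by the $L^\infty$ factor and raising to the power $\frac p2$ produces homogeneities $\frac{2+\iota}{2}$ in the variables $A^{p/r}$ and $B$, which are exactly of the form on the right of (\ref{133.1}). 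For the $\lam$-part, the $\na u$ contribution is handled by $r\ge\lam$: $\io|\na u|^\lam\le c A^{\lam/r}$, which gives homogeneity $\frac{1+\lam}{2}>1$. Mixed products $A^\alpha B^\beta$ are finally split by Young's inequality into pure powers. I would take $\mu_1:=\frac{2+\iota}{2}$ and $\mu_2:=\frac{1+\lam}{2}$, adjusting if necessary to the extreme exponents. Every resulting exponent lies in $[\mu_1,\mu_2]$, so each pure power is dominated by the sum of the $\mu_1$- and $\mu_2$-powers.

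\textbf{Main obstacle.} The hard step is the $\na v$ contribution $\io|\na v|^\lam$ in the $\lam$-part. This is where the case $\lam>p$, which the lemma does not exclude, bites. For $\lam\le p$ Hölder gives $\io|\na v|^\lam\le cB^{\lam/p}$ and the argument closes. For $\lam>p$, the only quantity the right-hand side of (\ref{133.1}) controls is $B=\io|\na v|^p$, and $W^{1,p}$ does not embed into $W^{1,\lam}$. I would first try to avoid putting $\na v$ into $L^\lam$ altogether, by integrating the $\na u_t$-dependence of $f$ by parts against the weight $|u|$ or $|v|$. That attempt fails in general, since $f$ is nonlinear in $Y$.

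I therefore expect the estimate as stated to be unprovable when $\lam>p$. Already $f=|Y|^\lam$ near $Y=0$ seems to yield a counterexample with $\|\na v\|_{L^p}$ small but $\io|v||\na v|^\lam$ large. My plan is thus to carry out Steps 1--2, which settle the case $\lam\le p$, the only one needed later since Theorem \ref{theo18} assumes $p\ge2\lam$. For the case $\lam>p$ I would test this candidate counterexample instead of attempting a proof.
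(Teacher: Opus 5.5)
Your Steps 1--2 follow the paper's proof essentially verbatim. The ingredients are the same: $L^\infty$ bounds for $u$ and $v$ from $p>n$, a split into $\{|\na u|+|\na u_t|\le 1\}$ (bound of order $1+\vt$ from (\ref{f0}), your $\iota$) and its complement (bound from (\ref{f1})), and the exponents $\frac{2+\vt}{2}$ and $\frac{\lam+1}{2}$, with the labels $\mu_1,\mu_2$ interchanged.

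Your diagnosis of the obstacle is also right. In (\ref{133.4}) the paper bounds $\io|\na v|^\lam$ by $\{\io|\na v|^p\}^{\lam/p}$, explicitly citing ``our assumptions that $r\ge\lam$ and $p\ge\lam$''. However, $p\ge\lam$ is not among the hypotheses of the lemma. This is harmless for Theorem \ref{theo18}, where $p\ge 2\max\{\lam,2\}$. Still, the hypothesis should be added to the lemma, and also to Lemma \ref{lem15} when $n=1$, since that lemma only assumes $p\ge\lam-1$ there.

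Your candidate counterexample does work, so the restriction to $\lam\le p$ is genuinely necessary. Take $h\equiv 1$, $f(\xi,\sig,X,Y):=|Y|^\lam$, $u_0:=0$ and $u_{0t}:=w_1+w_2$, where:
\begin{itemize}
\item $w_1\in C_0^\infty(\Om)$ is fixed with $0\le w_1\le\frac{\del_1}{4}$, and $w_1\ge\frac{\del_1}{8}$ on a ball $B_0$;
\item $w_2(x):=K\delta\,\psi_K(x)\sin\frac{x_1}{\delta}$, with $\psi_K$ a cut-off supported in a ball inside $B_0$ of measure $\approx K^{-p}$;
\item $\delta>0$ is so small that $K\delta$ and $K\delta\|\na\psi_K\|_{L^\infty(\Om)}$ are negligible.
\end{itemize}
At $t=0$ one has $\io|\na u|^r=0$ and $\io|\na v|^p\le C$ with $C$ independent of $K$. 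On the other hand, $v\ge\frac{\del_1}{16}$ on $B_0$, so $\io|v|\,|f|\ge\frac{\del_1}{16}\int_{B_0}|\na u_{0t}|^\lam\ge cK^{\lam-p}$, which tends to $\infty$ as $K\to\infty$ when $\lam>p$.

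It remains to pass from the solution to $t=0$. Since $|u_0|+|u_{0t}|<\del_1$ and $u,u_t\in C^0([0,\tm);C^1(\bom))$, condition (\ref{uut}) holds on some $(0,T)$. Both sides of (\ref{133.1}) are continuous at $t=0$, so (\ref{133.1}) would have to hold there. That fails for every choice of $\Gamma_8,\mu_1,\mu_2$, for example with $n=1$, $p=2$, $r=\lam=3$, where all stated hypotheses of the lemma are met.
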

\proof
  Since $r\ge p>n$, by continuity of the embeddings $W^{1,r}(\Om)\hra W^{1,p}(\Om) \hra L^\infty(\Om)$ we can find $c_1=c_1(p,r)>0$ 
  such that
  \be{133.2}
	\|\vp\|_{L^\infty(\Om)} \le c_1 \|\na\vp\|_{L^p(\Om)}
	\quad \mbox{and} \quad
	\|\vp\|_{L^\infty(\Om)} \le c_1 \|\na\vp\|_{L^r(\Om)}
	\qquad \mbox{for all } \vp\in W_0^{1,r}(\Om),
  \ee
  so that 
  \bea{133.3}
	& & \hs{-24mm}
	\io |u|\cdot |f(u,u_t,\na u,\na u_t)|
	+ \io |v|\cdot |f(u,u_t,\na u,\na u_t)| \nn\\
	&\le& c_1 \|\na u\|_{L^r(\Om)} \io |f(u,u_t,\na u,\na u_t)|
	+ c_1 \|\na v\|_{L^p(\Om)} \io |f(u,u_t,\na u,\na u_t)|
  \eea
  for all $t\in (0,T)$.
  Here, using (\ref{f1}), (\ref{v}), the H\"older inequality and our assumptions that $r\ge\lam$ and $p\ge \lam$,
  for all $t\in (0,T)$ we can estimate
  \bea{133.4}
	\int_{\{|\na u|+|\na u_t|>1\}} |f(u,u_t,\na u,\na u_t)|
	&\le& K_f \int_{\{|\na u|+|\na u_t|>1\}} \big(1+|\na u|+|\na u_t|\big)^\lam \nn\\
	&\le& 2^\lam K_f \int_{\{|\na u|+|\na u_t|>1\}} \big(|\na u|+|\na u_t|\big)^\lam \nn\\
	&\le& 2^{2\lam} (a+1)^\lam K_f \io |\na u|^\lam
	+ 2^{2\lam} K_f \io |\na v|^\lam \nn\\
	&\le& c_2 \cdot \bigg\{ \io |\na u|^r\bigg\}^\frac{\lam}{r} 
	+ c_2\cdot\bigg\{ \io |\na v|^p \bigg\}^\frac{\lam}{p}
  \eea
  with $c_2\equiv c_2(p,r,f):=2^{2\lam}(a+1)^\lam K_f \cdot (|\Om|^\frac{r-\lam}{r} + |\Om|^\frac{p-\lam}{p})$,
  because (\ref{uut}) and Lemma \ref{lem11} warrant that $|u|+|u_t|\le \del_1\le 1$ in $\Om\times (0,T)$.\abs
  In regions of small gradients, we draw on (\ref{f0}) to fix $\vt=\vt(h,f)\in (0,1)$ and $c_3=c_3(a,h,f)>0$ such that
  \bas
	\big| f(\xi,\wh{\sig}-a\xi,\Xi,\wh{\Sigma}-a\Xi)\big|
	&\le& c_3\cdot \big( |\xi|^{1+\vt} + |\wh{\sig}|^{1+\vt} + |\Xi|^{1+\vt} + |\wh{\Sigma}|^{1+\vt}\big) \nn\\
	& & \hs{20mm}
	\mbox{for all $(\xi,\wh{\sig},\Xi,\wh{\Sigma})\in \R^{2n+2}$ such that $|\Xi|+|\wh{\Sigma}-a\Xi| \le 1$}.
  \eas
  Accordingly, by (\ref{v}), (\ref{133.2}) and the H\"older inequality,
  \bas
	\int_{\{|\na u|+|\na u_t|\le 1\}} |f(u,u_t,\na u,\na u_t)|
	&\le& c_3 \io |u|^{1+\vt}
	+ c_3 \io |v|^{1+\vt}
	+ c_3 \io |\na u|^{1+\vt}
	+ c_3 \io |\na v|^{1+\vt} \nn\\
	&\le& c_3 c_1^{1+\vt} |\Om| \cdot \|\na u\|_{L^r(\Om)}^{1+\vt}
	+ c_3 c_1^{1+\vt} |\Om| \cdot \|\na v\|_{L^p(\Om)}^{1+\vt} \nn\\
	& & + c_3 |\Om|^\frac{r-1+\vt}{r} \|\na u\|_{L^r(\Om)}^{1+\vt}
	+ c_3 |\Om|^\frac{p-1+\vt}{p} \|\na v\|_{L^p(\Om)}^{1+\vt} \nn\\
	&\le& c_4 \|\na u\|_{L^r(\Om)}^{1+\vt}
	+ c_4 \|\na v\|_{L^p(\Om)}^{1+\vt}
	\qquad \mbox{for all } t\in (0,T).,
  \eas
  where $c_4\equiv c_4(p,r,a,h,f):=c_3 \cdot (c_1^{1+\vt} |\Om| + |\Om|^\frac{r-1+\vt}{r} + |\Om|^\frac{p-1+\vt}{p})$.
  Together with (\ref{133.4}) inserted into (\ref{133.3}), thanks to Young's inequality this shows that
  \bas
	& & \hs{-2mm}
	\io |u|\cdot |f(u,u_t,\na u,\na u_t)|
	+ \io |v|\cdot |f(u,u_t,\na u,\na u_t)| \nn\\
	&\le& c_1\cdot \big( \|\na u\|_{L^r(\Om)} + \|\na v\|_{L^p(\Om)} \big) \cdot
	c_2 \cdot \big( \|\na u\|_{L^r(\Om)}^\lam + \|\na v\|_{L^p(\Om)}^\lam \big) \\
	& & + c_1\cdot \big( \|\na u\|_{L^r(\Om)} + \|\na v\|_{L^p(\Om)} \big) \cdot
	c_4 \cdot \big( \|\na u\|_{L^r(\Om)}^{1+\vt} + \|\na v\|_{L^p(\Om)}^{1+\vt} \big) \\
	&\le& 3c_1 c_2 \|\na u\|_{L^r(\Om)}^{\lam+1} 
	+ 3c_1 c_2 \|\na v\|_{L^p(\Om)}^{\lam+1} \nn\\
	& & + 3c_1 c_4 \|\na u\|_{L^r(\Om)}^{2+\vt}
	+ 3c_1 c_4 \|\na v\|_{L^p(\Om)}^{2+\vt}
	\qquad \mbox{for all } t\in (0,T),
  \eas
  and hence implies (\ref{133.1}) if we let $\mu_1\equiv \mu_1(f):=\frac{\lam+1}{2}$ 
  and $\mu_2\equiv \mu_2(h,f):=\frac{2+\vt}{2}$.
\qed
Quite similar arguments apply to the corresponding integrals in (\ref{111.1}):
\begin{lem}\label{lem144}
  Assume (\ref{h}), (\ref{f0}) and (\ref{f1}) with some $\lam>1$, let $p\ge 2$ be such that $p>n$, 
  and let $r\ge p$ satisfy $r\ge 2\lam$.
  Then there exist $\Gamma_9=\Gamma_9(p,r,a,h,f)>0$ and
  $\mu_3=\mu_3(h,f)>1$ such that whenever (\ref{init}) holds and $T\in (0,\tm]$ is such that (\ref{uut}) is satisfied
  with $\del_1=\del_1(h)$ as in Lemma \ref{lem11}, it follows that
  \bea{144.1}
	\io |\na v|^{p-2} f^2(u,u_t,\na u,\na u_t)
	&\le& \Gamma_9 \cdot \bigg\{ \io |\na u|^r \bigg\}^\frac{p}{r}
	+ \Gamma_9 \cdot \bigg\{ \io |\na v|^r \bigg\}^\frac{p}{r} \nn\\
	& & + \Gamma_9 \io |\na v|^{p-2+2\lam}
	+ \Gamma_9 \|\na u\|_{L^r(\Om)}^{2\lam} \|\na v\|_{L^\frac{(p-2)r}{r-2\lam}(\Om)}^{p-2} \nn\\
	& & + \Gamma_9 \cdot \bigg\{ \io |\na u|^r \bigg\}^{\frac{p}{r}\cdot\mu_3}
	+ \Gamma_9 \cdot \bigg\{ \io |\na v|^p \bigg\}^{\mu_3}
  \eea
  for all $t\in (0,T)$.
\end{lem}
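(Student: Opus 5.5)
We follow the proof of Lemma \ref{lem133} and split $\Om$ into the region of large gradients, $\{|\na u|+|\na u_t|>1\}$, and its complement. Throughout we abbreviate $f=f(u,u_t,\na u,\na u_t)$. By (\ref{uut}) and $\del_1\le 1$ we have $|u|+|u_t|\le 1$, so (\ref{f1}) applies with $M=1$. On the large-gradient set we use $|\na u_t|\le a|\na u|+|\na v|$ to obtain
\bas
	f^2\le c\,|\na u|^{2\lam}+c\,|\na v|^{2\lam}.
\eas
This gives
\bas
	\int_{\{|\na u|+|\na u_t|>1\}} |\na v|^{p-2} f^2
	\le c\io |\na v|^{p-2+2\lam} + c\io |\na u|^{2\lam}|\na v|^{p-2}.
\eas
To the mixed term we apply H\"older's inequality with exponents $\frac{r}{2\lam}$ and $\frac{r}{r-2\lam}$, which is admissible since $r\ge 2\lam$; in the borderline case $r=2\lam$ the factor involving $\na v$ is an $L^\infty$ norm. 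The outcome is exactly the summand $\|\na u\|_{L^r(\Om)}^{2\lam}\|\na v\|_{L^{\frac{(p-2)r}{r-2\lam}}(\Om)}^{p-2}$ in (\ref{144.1}).

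On the small-gradient set $\{|\na u|+|\na u_t|\le 1\}$ we use (\ref{f0}) as in Lemma \ref{lem133}. With $\vt\in(0,1)$ and $c_3$ taken from there,
\bas
	f^2\le c\,\big(|u|^{2+2\vt}+|v|^{2+2\vt}+|\na u|^{2+2\vt}+|\na v|^{2+2\vt}\big).
\eas
For the terms in $u$ and $v$ we use $\|u\|_{L^\infty(\Om)}\le c_1\|\na u\|_{L^r(\Om)}$ and $\|v\|_{L^\infty(\Om)}\le c_1\|\na v\|_{L^p(\Om)}$ from (\ref{133.2}), combined with $\io|\na v|^{p-2}\le c\|\na v\|_{L^p(\Om)}^{p-2}$. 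These terms are therefore bounded by $\|\na v\|_{L^p(\Om)}^{p-2}\big(\|\na u\|_{L^r(\Om)}^{2+2\vt}+\|\na v\|_{L^p(\Om)}^{2+2\vt}\big)$. Young's inequality then turns this into powers $\{\io|\na u|^r\}^{\frac{p}{r}\mu_3}$ and $\{\io|\na v|^p\}^{\mu_3}$ with $\mu_3:=\frac{p+2\vt}{p}>1$. At this point we use the splitting $a^{p-2}b^{2+2\vt}\le a^{p+2\vt}+b^{p+2\vt}$, and it is here that $\mu_3$ is chosen to depend only on $\vt$ and $p$; if $\mu_3$ must be independent of $p$, one could instead take $\vt$ smaller accordingly.

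For the remaining terms $|\na u|^{2+2\vt}|\na v|^{p-2}$ and $|\na v|^{p+2\vt}$ on the small-gradient set, we use that there $|\na u|\le 1$ and $|\na v|\le (1+a)$, so that $|\na u|^{2+2\vt}\le |\na u|^2$ and $|\na v|^{p+2\vt}\le c|\na v|^p$. Young's inequality then gives $|\na u|^2|\na v|^{p-2}\le |\na u|^p+|\na v|^p$, and H\"older's inequality with $r\ge p$ bounds $\io|\na u|^p+\io|\na v|^p$ by the linear-order terms $\{\io|\na u|^r\}^{p/r}+\{\io|\na v|^r\}^{p/r}$ in (\ref{144.1}). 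This explains why those terms carry no superlinear exponent: this part of the estimate does not exploit superlinearity, and it is harmless because later these terms are absorbed via Lemma \ref{lem1444} and Corollary \ref{cor312}.

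The main point of care is therefore this bookkeeping on the small-gradient set: deciding which terms exploit $\vt$, namely only those involving $u$ and $v$ themselves, and which are bounded merely linearly. The large-gradient part is a routine H\"older computation.
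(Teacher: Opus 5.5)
Your argument is essentially the paper's proof. It uses the same split into $\{|\nabla u|+|\nabla u_t|>1\}$ and its complement, and the same H\"older step producing $\|\nabla u\|_{L^r(\Omega)}^{2\lambda}\|\nabla v\|_{L^{\frac{(p-2)r}{r-2\lambda}}(\Omega)}^{p-2}$. It also uses $p>n$ in the same way to control $u$ and $v$ by their gradients, and the same crude bounds via $|\nabla u|\le 1$, $|\nabla v|\le 1+a$ on the small-gradient set. Using $L^\infty$ instead of $L^{(2+\vartheta)p/2}$ there makes no difference.

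The one flaw is that your $\mu_3=\frac{p+2\vartheta}{p}$ depends on $p$, whereas the statement has $\mu_3=\mu_3(h,f)$, and shrinking $\vartheta$ does not remove that dependence. Instead, as the paper does, apply Young with exponents $\frac{p}{p-2}$ and $\frac{p}{2}$ to get $\|\nabla v\|_{L^p(\Omega)}^{p-2}\|\nabla u\|_{L^r(\Omega)}^{2+2\vartheta}\le\|\nabla v\|_{L^p(\Omega)}^{p}+\|\nabla u\|_{L^r(\Omega)}^{(1+\vartheta)p}$. Then use $X^{\frac{p+2\vartheta}{p}}\le X+X^{1+\vartheta}$ for $X=\int_\Omega|\nabla v|^p$, which gives $\mu_3:=1+\vartheta$; the linear remainders are absorbed into $\{\int_\Omega|\nabla v|^r\}^{p/r}$ by H\"older.
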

\proof
  Our assumptions that $f(0,0,0,0)=0$ and $\na f(0,0,0,0)=0$ imply that since $\na f$ is locally H\"older continuous in
  $\R^{2n+2}$, with some $\vt=\vt(h,f)\in (0,1)$ and some $c_1=c_1(h,f)>0$ we have
  \bas
	f^2(\xi,\sig,\Xi,\Sigma)
	&\le& c_1 \cdot \big( |\xi|^{2+\vt} + |\sig|^{2+\vt} + |\Xi|^{2+\vt} + |\Sigma|^{2+\vt}\big) \\
	& & \hs{20mm}
	\mbox{for all $(\xi,\sig,\Xi,\Sigma) \in \R^{2n+2}$ such that $|\xi|+|\sig|\le \del_1$ and $|\Xi|+|\Sigma| \le 1$.}
  \eas
  Assuming (\ref{init}) and (\ref{uut}) with some $T\in (0,\tm]$, since $u_t=v-au$ we thus obtain that
  \bas
  	& & f^2(u,u_t,\na u,\na u_t) \hs{180mm} \\
	&\le& c_1\cdot \Big\{ |u|^{2+\vt} + 2^{2+\vt} (|v|^{2+\vt} + a^{2+\vt} |u|^{2+\vt}\big)
	+ |\na u|^{2+\vt} + 2^{2+\vt} \big( |\na v|^{2+\vt} + a^{2+\vt} |\na u|^{2+\vt}\big) \Big\} \\
	&\le& c_2\cdot \big( |u|^{2+\vt} + |v|^{2+\vt} + |\na u|^{2+\vt} + |\na v|^{2+\vt}\big)
  \eas
  in $(\Om\times (0,T))\cap \{|\na u|+|\na u_t| \le 1\}$, where $c_2\equiv c_2(h,f,a):=c_1\cdot (1+2^{2+\vt})(1+c_1^{2+\vt})$.
  Since the inequality $r\ge p$ and the continuity of the embedding $W^{1,p}(\Om) \hra L^\frac{(2+\vt)p}{2}(\Om)$,
  as particularly implied by the inequality $p>n$, entails the existence of $c_3=c_3(p,h,f)>0$ and $c_4=c_4(p,h,f)>0$ such that
  \bas
	\|\vp\|_{L^\frac{(2+\vt)p}{2}(\Om)}^{2+\vt}
	\le c_3 \|\na\vp\|_{L^p(\Om)}^{2+\vt}
	\le c_4 \|\na\vp\|_{L^r(\Om)}^{2+\vt}
	\qquad \mbox{for all } \vp\in W_0^{1,r}(\Om),
  \eas
  by means of the H\"older inequality and the fact that $|\na v| = |\na (u_t+au)| \le 1+a$ in 
  $\{|\na u| + |\na u_{t}| \le 1\}$ we therefore obtain that 
  \bas
	& & \hs{-30mm}
	\int_{\{|\na u|+|\na u_t|\le 1 \}} |\na v|^{p-2} f^2(u,u_t,\na u,\na u_t) \\
	&\le& c_2 \io |u|^{2+\vt} |\na v|^{p-2}
	+ c_2 \io |v|^{2+\vt} |\na v|^{p-2} \\
	& & + c_2 \int_{\{ |\na u|+|\na u_t|\le 1\}} |\na u|^{2+\vt} |\na v|^{p-2}
	+ c_2 \int_{\{ |\na u|+ |\na u_t| \le 1\}} |\na v|^{p+\vt} \\
	&\le& c_2 \|u\|_{L^\frac{(2+\vt)p}{2}(\Om)}^{2+\vt} \|\na v\|_{L^p(\Om)}^{p-2}
	+ c_2 \|v\|_{L^\frac{(2+\vt)p}{2}(\Om)}^{2+\vt} \|\na v\|_{L^p(\Om)}^{p-2} \\
	& & + c_2 \io |\na u|^2 |\na v|^{p-2}
	+ c_2 \cdot (1+a)^{\vt} \io |\na v|^p \\
	&\le& c_2 c_4 \|\na u\|_{L^r(\Om)}^{2+\vt} \|\na v\|_{L^p(\Om)}^{p-2}
	+ c_2 c_3 \|\na v\|_{L^p(\Om)}^{p+\vt} \\
	& & + c_2 \|\na u\|_{L^p(\Om)}^2 \|\na v\|_{L^p(\Om)}^{p-2}
	+ c_2 (1+a)^{\vt} \io |\na v|^p
  \eas
  for all $t\in (0,T)$.
  Since with $c_5\equiv c_5(p,r):=|\Om|^\frac{r-p}{pr}$ we have $\|\vp\|_{L^p(\Om)} \le c_5 \|\vp\|_{L^r(\Om)}$ for all 
  $\vp\in L^r(\Om)$ by the H\"older inequality, and since Young's inequality together with the hypothesis $p\ge 2$ guarantees that
  \bas
	\|\na v\|_{L^p(\Om)}^{p+\vt}
	= \bigg\{ \io |\na v|^p\bigg\}^\frac{p+\vt}{p}
	\le \bigg\{ \io |\na v|^p \bigg\}^\frac{2+\vt}{2}
	+ \io |\na v|^p
	\qquad \mbox{for all } t\in (0,T)
  \eas
  due to the fact that $1\le \frac{p+\vt}{p} \le \frac{2+\vt}{2}$, according to Young's inequality this entails that
  for all $t\in (0,T)$,
  \bea{144.2}
	\int_{\{|\na u|+|\na u_{t}|\le 1\}} |\na v|^{p-2} f^2(u,u_t,\na u,\na u_t)
	&\le& c_2 c_4 c_5^p \cdot \bigg\{ \io |\na v|^r \bigg\}^\frac{p}{r}
	+ c_2 c_4 \cdot \bigg\{ \io |\na u|^r \bigg\}^{\frac{p}{r} \cdot \frac{2+\vt}{2}} \nn\\
	& & + c_2 c_3\cdot \bigg\{ \io |\na v|^p \bigg\}^\frac{2+\vt}{2}
	+ c_2 c_3 c_5^p \cdot \bigg\{ \io |\na v|^r \bigg\}^\frac{p}{r} \nn\\
	& & + c_2 c^p_5 \cdot \bigg\{ \io |\na u|^r \bigg\}^\frac{p}{r}
	+ c_2 c_5^p \cdot \bigg\{ \io |\na v|^r \bigg\}^\frac{p}{r} \nn\\
	& & + c_2\cdot (1+a)^{2+\vt} c_5^p \cdot \bigg\{ \io |\na v|^r \bigg\}^\frac{p}{r}.
  \eea
  In the corresponding complementary region, we may rely on (\ref{f1}) to see that, again due to (\ref{v}),
  \bas
	f^2(u,u_t,\na u, \na u_t)
	&\le& K_f^2 \cdot \big( 1+ |\na u| + |\na u_t|\big)^{2\lam} \\
	&\le& (2^\lam K_f)^2 \cdot \big( |\na u| + |\na u_t|\big)^{2\lam} \\
	&\le& (2^\lam K_f^2) \cdot \big( (1+a) |\na u| + |\na v|\big)^{2\lam} \\
	&\le& c_6 |\na u|^{2\lam} + c_6 |\na v|^{2\lam}
	\qquad \mbox{in } (\Om\times (0,T))\cap \{ |\na u| + |\na u_{t}|>1\}
  \eas
  with $c_6\equiv c_6(a,f):=(4^\lam K_f)^2 (1+a)^{2\lam}$, because (\ref{uut}) along with the restriction $\del_1$ 
  in Lemma \ref{lem11} ensures that $|u|+|u_t|\le 1$ in $\Om\times (0,T)$.
  Consequently, the assumption $r\ge 2\lam$ guarantees that 
  due to the H\"older inequality,  
  for all $t\in (0,T)$ we have
  \bas
	\int_{\{|\na u|+|\na u_t|>1\}} |\na v|^{p-2} f^2(u,u_t,\na u,\na u_t)
	&\le& c_6 \io |\na u|^{2\lam} |\na v|^{p-2}
	+ c_6 \io |\na v|^{p-2+2\lam} \\
	&\le& c_6 \|\na u\|_{L^r(\Om)}^{2\lam} \|\na v\|_{L^\frac{(p-2)r}{r-2\lam}(\Om)}^{p-2}
	+ c_6 \io |\na v|^{p-2+2\lam},
  \eas
  which combined with (\ref{144.2}) establishes (\ref{144.1}) with $\mu_3\equiv \mu_3(h,f):=\frac{2+\vt}{2}$.
\qed
\mysection{Closing the loop. Proof of Theorem \ref{theo18}}\label{sect6}
Having the outcomes of the previous two sections at hand, we are now prepared to derive an inequality of the form announced 
in (\ref{010}) for the combined energy functional in (\ref{09}), yet conditional in the sense that the smallness assumption in
(\ref{uut}) is presupposed.
\begin{lem}\label{lem15}
  Assume (\ref{h}), (\ref{f0}) and (\ref{f1}) 
  with some $\lam>1$,
  and let $p\ge 2$ and $r\ge \max\{p,2\lam\}$ be such that $p>n$ and
  \be{15.1}
	p\ge n(\lam-1)
  \ee
  as well as
  \be{15.01}
	r>2\lam
	\qquad \mbox{if } n=2
  \ee
  and
  \be{15.02}
	\frac{np\lam}{n+p-2} \le r < \frac{np}{n-2}
	\qquad \mbox{if } n\ge 3.
  \ee
  Then there exist $b_1=b_1(p,r,a,h,f)>0, b_2=b_2(p,r,a,h,f)>0$, $\kappa_1=\kappa_1(p,f)>1$ and $\kappa_2=\kappa_2(p,h,f)>1$
  as well as $\Gamma_{10}=\Gamma_{10}(p,r,a,h,f)>0$
  and $\Gamma_{11}=\Gamma_{11}(p,r,a,h,f)>0$ such that whenever (\ref{init}) and (\ref{uut}) hold with some $T\in (0,\tm]$,
  the function defined by
  \be{F}
	\F(t)
	:= \io |\na v|^p
	+ b_1 \cdot \bigg\{ \io |\na u|^r \bigg\}^\frac{p}{r}
	+ b_2 \F_1^\frac{p}{2}(t)
	\qquad t\in [0,\tm),
  \ee
  with $\F_1$ taken from (\ref{F1}) satisfies
  \be{15.2}
	\F'(t) + \Gamma_{10} \F(t)
	+ \Big\{ \Gamma_{10} - \Gamma_{11} \F^{\kappa_1}(t) \Big\} \cdot \io |\na v|^{p-2} |D^2 v|^2
	\le \Gamma_{11} \F^{\kappa_2}(t)
	\qquad \mbox{for all } t\in (0,T).
  \ee
\end{lem}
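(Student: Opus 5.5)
The plan is to add Lemma \ref{lem111}, $b_1$ times Lemma \ref{lem1444}, and $b_2$ times Corollary \ref{cor14}. The right-hand sides are then bounded via Lemmata \ref{lem133} and \ref{lem144}, and every resulting term is reduced either to a superlinear power of $\F$ or to a product of $\F^{\kappa_1}$ with the dissipation $I(t):=\io |\na v|^{p-2}|D^2v|^2$. Throughout, $\io v^2\le \Gamma_2^{-1}\F_1\le \Gamma_2^{-1}(\F/b_2)^{2/p}$ by (\ref{13.1}), and $\{\io|\na u|^r\}^{p/r}\le \F/b_1$.

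\textbf{Linear terms.} The terms linear in $\F$-type quantities come from Lemma \ref{lem111} and from (\ref{144.1}): $\{\io|\na u|^r\}^{p/r}$ and $\{\io|\na v|^r\}^{p/r}$, and in Lemma \ref{lem1444} the term $a^{1-p}\{\io|\na v|^r\}^{p/r}$. The $u$-terms are absorbed by choosing $b_1$ large, so that $\frac{ab_1}{2}$ dominates $\Gamma_7+\Gamma_7\Gamma_9$. The $\na v$-terms in $L^r$, multiplied by $(1+b_1a^{1-p})$, are handled by Corollary \ref{cor312}, which applies since $r<\frac{np}{(n-2)_+}$ (for $n\le 2$ any $r$ works). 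With $\eps$ small this gives $\frac{\Gamma_6}{4}I+C(b_1)\{\io v^2\}^{p/2}\le \frac{\Gamma_6}{4}I+C(b_1)\Gamma_2^{-p/2}\F_1^{p/2}$. This last term is absorbed by $b_2\Gamma_4\F_1^{p/2}$ once $b_2$ is chosen large after $b_1$. The decay term $\Gamma_{10}\io|\na v|^p$ comes from Lemma \ref{lem2} with $s=p$, using a portion of $I$. The superlinear contributions from Corollary \ref{cor14} ($\{\io|u|^3\}^{p/2}$, $\{\io|v|^3\}^{p/2}$ and the $f$-terms through Lemma \ref{lem133}) are bounded by Sobolev/Poincar\'e in $W_0^{1,p}\hra L^\infty$, since $p>n$: for instance $\io|v|^3\le c\|\na v\|_{L^p}^{3}$, hence $\{\cdot\}^{p/2}\le c\F^{3/2}$. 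Together with the $\mu_i>1$ powers these go into $\Gamma_{11}\F^{\kappa_2}$, where we take $\kappa_2>1$ as the minimum exponent and use $\F^{\alpha}\le \F^{\kappa_2}+\F^{\alpha_{\max}}$, with $\kappa_2$ adjusted accordingly.

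\textbf{Dissipation-dependent terms (main obstacle).} Four terms must be bounded by $C\F^{\kappa_1}I$: $\io|\na v|^{p+2}$, $\io|\na v|^{p-2+2\lam}$, $\|\na u\|_{L^r}^2\|\na v\|_{L^{pr/(r-2)}}^p$, and $\|\na u\|_{L^r}^{2\lam}\|\na v\|_{L^{(p-2)r/(r-2\lam)}}^{p-2}$.
\begin{itemize}
\item For the first two, Lemma \ref{lem3} with $q=p+2$ or $q=p-2+2\lam$ gives $\{\io|\na v|^p\}^{(q-p)/p}I$, provided $q\le\frac{(n+2)p}{n}$. For $q=p+2$ this means $p\ge n$, and for $q=p-2+2\lam$ it is exactly (\ref{15.1}) (if $\lam<2$ use $q=p+2$ domination or Young). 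This yields $\F^{2/p}I$ and $\F^{(2\lam-2)/p}I$.
\item For the mixed terms, use Lemma \ref{lem2} with $s=\frac{pr}{r-2}$, resp. $s=\frac{(p-2)r}{r-2\lam}$ (in the second, $\|\na v\|_{L^s}^{p-2}\le (\|\na v\|_{L^{s'}}^p)^{(p-2)/p}$ for a suitable $s'$, combined with the $\|\na v\|_{L^p}$ factor). This requires $s\le \frac{np}{n-2}$, which after computation amounts to $r\ge \frac{np}{p+n-2}$ and $r\ge\frac{np\lam}{n+p-2}$-type conditions, i.e. (\ref{15.02}); for $n=2$ finiteness of $s$ is ensured by (\ref{15.01}). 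The second mixed term is then $\le C\F^{2\lam/p}(\F^{\cdot}+I)$ after Young, with the leftover $\F$-power being superlinear.
\end{itemize}
I expect the exponent bookkeeping here to be the hardest part, in particular verifying that the second mixed term splits as $\F^{\alpha}I^{\beta}\F^{\gamma}$ with $\beta\le1$ and collecting everything as $\F^{\kappa_1}I+\F^{\kappa_2}$. Finally, $\kappa_1$ is the minimum of the positive exponents, with $\F^{\alpha}\le \F^{\kappa_1}+\F^{\alpha'}$ handled as above, and $\Gamma_{10}:=\min\{\frac{\Gamma_6}{4},\dots\}$.
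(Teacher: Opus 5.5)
Your overall route is the paper's. You use the same combination of Lemma \ref{lem111}, Lemma \ref{lem1444} and Corollary \ref{cor14}. You fix $b_1$ first, then $\eps$ in Corollary \ref{cor312} depending on $b_1$, then $b_2$ large so that part of $b_2\Gamma_4\F_1^{p/2}$ absorbs $\{\io v^2\}^{p/2}$ via (\ref{13.1}). The remaining tools also match:
\begin{itemize}
\item Lemma \ref{lem3} with $q=p+2$ and $q=p-2+2\lam$. Your caveat for $\lam<2$ is unnecessary, since $q\ge p$ anyway.
\item Lemma \ref{lem2} for the mixed terms and for the decay of $\io|\na v|^p$.
\item The $L^\infty$ embeddings for the cubic and $f$-terms.
\end{itemize}
There are two small slips. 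First, for $s=\frac{pr}{r-2}$ the condition $s\le\frac{np}{(n-2)_+}$ is simply $r\ge n$, which is automatic; it is not $r\ge\frac{np}{p+n-2}$. Second, the second mixed term has no $\|\na v\|_{L^p}$ factor to combine with. Lemma \ref{lem2} with $s=\frac{(p-2)r}{r-2\lam}$ gives directly $\|\na u\|_{L^r}^{2\lam}\|\na v\|_{L^s}^{p-2}\le c\F^{2\lam/p}I^{(p-2)/p}$. The correct Young split, with exponents $\frac p{p-2}$ and $\frac p2$, is then $\le\delta I+C_\delta\F^{\lam}$. Read literally, your schematic $C\F^{2\lam/p}(\F^{\cdot}+I)$ keeps $\F^{2\lam/p}$ as a prefactor and splits only $I^{(p-2)/p}$. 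That leaves a term $\F^{2\lam/p}$, which is sublinear as soon as $p>2\lam$.

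The genuine gap is the final collection of powers. You take $\kappa_2$ and $\kappa_1$ as the \emph{minimal} exponents and use $\F^\alpha\le\F^{\kappa}+\F^{\alpha_{\max}}$. This leaves sums of different powers, and no single $\Gamma_{11}\F^{\kappa}$ with $\kappa<\alpha_{\max}$ dominates $\F^{\alpha_{\max}}$ for large $\F$. Nor does (\ref{uut}) give any upper bound on $\F$, since it controls $u,u_t$ but not their gradients. The same problem arises for the coefficients of $I$. So (\ref{15.2}), which has a single power on each side, does not follow as written.

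The missing device is to take \emph{maximal} exponents and absorb lower powers into the linear terms on the left-hand side:
\begin{itemize}
\item For the forcing, set $\kappa_2:=\max\{\mu_1,\mu_2,\mu_3,\frac32,\lam\}$ and use $\F^{\kappa}\le\delta\F+C_\delta\F^{\kappa_2}$ for $1<\kappa\le\kappa_2$. This consumes part of the damping term $c\F$.
\item For the coefficients of the dissipation, use $c\F^{\kappa}\le\delta+C_\delta\F^{\kappa_1}$ for $0<\kappa\le\kappa_1$. Then $c\F^\kappa I\le\delta I+C_\delta\F^{\kappa_1}I$, and $\delta I$ is absorbed by the dissipation.
\end{itemize}
The second device allows any $\kappa_1\ge\max\{\frac2p,\frac{2(\lam-1)}p\}$, in particular $\kappa_1>1$ as written in the statement. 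A minimum, by contrast, forces $\kappa_1\le\frac2p\le1$.
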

\proof
  We let $\Gamma_i=\Gamma_i(p,a,h,f)>0$, 
  $i\in\{4,5,6,7\}$,
  be as in Lemma \ref{lem111} and Lemma \ref{lem144}, and take $\Gamma_i=\Gamma_i(p,r,a,h,f)>0$,
  $i\in\{8,9\}$,
  as well as $\mu_i=\mu_i(p,r,f)>1$, $i\in\{1,2,3\}$, from 
  Lemma \ref{lem144} and Lemma \ref{lem133}.
  We then fix $b_1=b_1(p,r,a,h,f)>0$ large enough such that
  \be{15.3}
	\frac{b_1 a}{2} \ge \Gamma_7 (1+\Gamma_9),
  \ee
  and apply 
  Corollary \ref{cor312}
  to 
  \be{15.33}
	\eps:=\frac{\Gamma_6}{2\cdot\big\{ \Gamma_7(1+\Gamma_9) + b_1 a^{1-p}\big\}},
  \ee
  to obtain $\Gamma_1=\Gamma_1(\eps,p,r)$ with the properties specified there.
  We thereupon pick $b_2=b_2(p,r,a,h)>0$ large enough fulfilling
  \be{15.4}
	\frac{b_2 \Gamma_4  \Gamma^{\frac{p}{2}}_2}{2} \ge \Gamma_1\cdot\big\{ \Gamma_7(1+\Gamma_9) + b_1 a^{1-p}\big\},
  \ee
  and assuming $u_0, u_{0t}$ and $T\in (0,\tm]$ to be such that (\ref{init}) and (\ref{uut}) hold, we let $\F$ be as defined
  in (\ref{F}).
  A combination of Lemma \ref{lem111} with Lemma \ref{lem144} then shows that since $r\ge 2\lam$,
  \bas
	\frac{d}{dt} \io |\na v|^p
	+ \Gamma_6 \io |\na v|^{p-2} |D^2 v|^2
	&\le& \Gamma_7(1+\Gamma_9) \cdot\bigg\{ \io |\na u|^r \bigg\}^\frac{p}{r}
	+ \Gamma_7(1+\Gamma_9) \cdot\bigg\{ \io |\na v|^r\bigg\}^\frac{p}{r} \\
	& & + \Gamma_7 \cdot \bigg\{ \io |\na u|^r \bigg\}^\frac{2}{r} \cdot \bigg\{ \io |\na v|^\frac{pr}{r-2}\bigg\}^\frac{r-2}{r}
	+ \Gamma_7 \io |\na v|^{p+2} \\
	& & + \Gamma_7 \Gamma_9 \io |\na v|^{p-2+2\lam}
	+ \Gamma_7 \Gamma_9 \|\na u\|_{L^r(\Om)}^{2\lam} \|\na v\|_{L^\frac{(p-2)r}{r-2\lam}(\Om)}^{p-2} \\
	& & + \Gamma_7 \Gamma_9 \cdot\bigg\{ \io |\na u|^r \bigg\}^{\frac{p}{r}\cdot\mu_3}
	+ \Gamma_7 \Gamma_9 \cdot \bigg\{ \io |\na v|^p \bigg\}^{\mu_3}
  \eas
  for all $t\in (0,T)$, while Corollary \ref{cor14} together with Lemma \ref{lem13} implies that
  \bas
	& & \hs{-20mm}
	\frac{d}{dt} \F_1^\frac{p}{2}(t)
	+  \frac{ \Gamma_4}{2}\F_1^\frac{p}{2}(t) +  \frac{ \Gamma_4  \Gamma^{\frac{p}{2}}_2}{2} \cdot \bigg\{ \io v^2\bigg\}^\frac{p}{2} \nn\\
	&\le& \Gamma_5 \Gamma_8 \cdot \bigg\{ \io |\na u|^r \bigg\}^{\frac{p}{r}\cdot \mu_1}
	+ \Gamma_5 \Gamma_8 \cdot \bigg\{ \io |\na u|^r \bigg\}^{\frac{p}{r}\cdot \mu_2} \nn\\
	& & + \Gamma_5 \Gamma_8 \cdot \bigg\{ \io |\na v|^p \bigg\}^{\mu_1}
	+ \Gamma_5 \Gamma_8 \cdot \bigg\{ \io |\na v|^p \bigg\}^{\mu_2} \nn\\
	& & +\Gamma_5\cdot \bigg\{ \io |u|^3 \bigg\}^\frac{p}{2}
	+ \Gamma_5 \cdot \bigg\{ \io |v|^3 \bigg\}^\frac{p}{2}
  \eas
  for all $t\in (0,T)$.
  In conjunction with Lemma \ref{lem1444}, this entails that if for $t\in (0,T)$ we abbreviate 
  $I(t):=\Gamma_6 \io |\na v|^{p-2} |D^2 v|^2$, then
  \bea{15.5}
	& & \hs{-16mm}
	\F'(t) 
	+ I(t)
	+ b_1 a\cdot \bigg\{ \io |\na u|^r \bigg\}^\frac{p}{r}
	+ \frac{ b_2 \Gamma_4}{2}
	\F_1^\frac{p}{2}(t)
	+ \frac{b_2 \Gamma_4  \Gamma^{\frac{p}{2}}_2}{2}
	\cdot \bigg\{ \io v^2 \bigg\}^\frac{p}{2} \nn\\
	&\le& \Gamma_7 (1+\Gamma_9) \cdot\bigg\{ \io |\na u|^r \bigg\}^\frac{p}{r}
	+ \big\{ \Gamma_7(1+\Gamma_9) + b_1 a^{1-p} \big\}\cdot \bigg\{ \io |\na v|^r \bigg\}^\frac{p}{r} \nn\\
	& & + \Gamma_7 \cdot \bigg\{ \io |\na u|^r \bigg\}^\frac{2}{r} 
		\cdot \bigg\{ \io |\na v|^\frac{pr}{r-2} \bigg\}^\frac{r-2}{r}
	+ \Gamma_7 \Gamma_9 \io |\na v|^{p-2+2\lam} 
	+ \Gamma_7\io |\na v|^{p+2}
	\nn\\
	& & + \Gamma_7 \Gamma_9 \|\na u\|_{L^r(\Om)}^{2\lam} \|\na v\|_{L^\frac{(p-2)r}{r-2\lam}(\Om)}^{p-2} 
	+ \Gamma_7 \Gamma_9 \cdot \bigg\{ \io |\na u|^r \bigg\}^{\frac{p}{r}\cdot\mu_3}
	+ \Gamma_7 \Gamma_9 \cdot\bigg\{ \io |\na v|^p\bigg\}^{\mu_3} \nn\\
	& & + b_2 \Gamma_5 \Gamma_8 \cdot \bigg\{ \io |\na u|^r\bigg\}^{\frac{p}{r}\cdot\mu_1}
	+ b_2\Gamma_5 \Gamma_8\cdot\bigg\{ \io |\na u|^r \bigg\}^{\frac{p}{r}\cdot\mu_2} \nn\\
	& & + b_2 \Gamma_5 \Gamma_8\cdot\bigg\{ \io |\na v|^p\bigg\}^{\mu_1}
	+ b_2\Gamma_5 \Gamma_8\cdot\bigg\{ \io |\na v|^p\bigg\}^{\mu_2} \nn\\
	& & + b_2 \Gamma_5 \cdot \bigg\{ \io |u|^3 \bigg\}^\frac{p}{2}
	+ b_2 \Gamma_5 \cdot\bigg\{ \io |v|^3 \bigg\}^\frac{p}{2}
	\qquad \mbox{for all } t\in (0,T).
  \eea
  Here, 
  \be{15.6}
	\Gamma_7(1+\Gamma_9)\cdot\bigg\{ \io |\na u|^r\bigg\}^\frac{p}{r}
	\le \frac{b_1 a}{2} \cdot\bigg\{ \io |\na u|^r \bigg\}^\frac{p}{r}
	\qquad \mbox{for all } t\in (0,T)
  \ee
  by (\ref{15.3}), whereas in line with our definition of $\eps$ in (\ref{15.33}) we infer from Corollary \ref{cor312} and
  (\ref{15.4}) that
  \bea{15.7}
	\hs{-6mm}
	\big\{ \Gamma_7(1+\Gamma_9) + b_1 a^{1-p}\big\} \cdot\bigg\{ \io|\na v|^r\bigg\}^\frac{p}{r} 
	&\le& \frac{1}{2} I(t)
	+ \Gamma_1\cdot\big\{ \Gamma_7(1+\Gamma_9) + b_1 a^{1-p}\big\} \cdot\bigg\{ \io v^2\bigg\}^\frac{p}{2} \nn\\
	&\le& \frac{1}{2} I(t)
	+ \frac{b_2 \Gamma_4  \Gamma^{\frac{p}{2}}_2}{2}
	\cdot\bigg\{ \io v^2\bigg\}^\frac{p}{2} 
	\qquad \mbox{for all } t\in (0,T).
  \eea
  Next, since the inequality $p\ge n$ ensures that $(p+2)\le \frac{(n+2)p}{n}$, we may draw on Lemma \ref{lem3}
  in choosing $c_1=c_1(p,a,h,f)>0$ fulfilling
  \be{15.77}
	\Gamma_7 \io |\na v|^{p+2}
	\le c_1 \cdot \bigg\{ \io |\na v|^p \bigg\}^\frac{2}{p} \cdot \io |\na v|^{p-2} |D^2 v|^2
	\le \frac{c_1}{\Gamma_6} \F^\frac{2}{p}(t) I(t)
	\qquad \mbox{for all } t\in (0,T).
  \ee
  Likewise,
  since $r\ge 2\lam>2$ and $r\ge p>n$, it follows that $\frac{pr}{r-2} < \frac{np}{(n-2)_+}$, whence Lemma \ref{lem2}
  applies so as to provide $c_2=c_2(p,r,a,h,f)>0$ such that
  \be{15.8}
	\bigg\{ \io |\na v|^\frac{pr}{r-2}\bigg\}^\frac{r-2}{r}
	\le c_2 I(t)
	\qquad \mbox{for all } t\in (0,T),
  \ee
  while similarly the inequalities $r\ge \frac{np\lam}{p+n-2}$ when $n\ge 3$ and $r>2\lam$ when $n=2$ 
  ensure that $\frac{(p-2)r}{r-2\lam}$ is a finite number not greater than $\frac{np}{(n-2)_+}$,
  and that thus from Lemma \ref{lem2} we obtain $c_3=c_3(p,r,a,h,f)>0$ satisfying 
  \be{15.9}
	\|\na v\|_{L^\frac{(p-2)r}{r-2\lam}(\Om)}^{p-2}
	\le c_3 I^\frac{p-2}{p}(t)
	\qquad \mbox{for all } t\in (0,T).
  \ee
  Apart from that, our assumption that $p\ge n(\lam-1)$ implies that $p-2+2\lam\le\frac{(n+2)p}{n}$, so that Lemma \ref{lem3} 
  asserts the existence of $c_4=c_4(p,a,h,f)>0$ such that
  \be{15.10}
	\io |\na v|^{p-2+2\lam}
	\le c_4\cdot\bigg\{ \io |\na v|^p\bigg\}^\frac{2(\lam-1)}{p} \cdot I(t)
	\qquad \mbox{for all } t\in (0,T),
  \ee
  and we finally make use of the continuity of the embeddings $W^{1,r}(\Om) \hra L^3(\Om)$ and $W^{1,p}(\Om) \hra L^3(\Om)$,
  as guaranteed by 
  the inequality $p>n$
  and the fact that $r\ge p$,
  to pick $c_5=c_5(p,r)>0$ and $c_6=c_6(p)>0$ fulfilling
  \be{15.11}
	\bigg\{ \io |\vp|^3 \bigg\}^\frac{p}{2} \le c_5\cdot\bigg\{ \io |\na\vp|^r\bigg\}^\frac{3p}{2r}
	\quad \mbox{and} \quad
	\bigg\{ \io |\vp|^3 \bigg\}^\frac{p}{2} \le c_6\cdot\bigg\{ \io |\na\vp|^p\bigg\}^\frac{3}{2}
	\qquad \mbox{for all } \vp\in W_0^{1,r}(\Om).
  \ee
  Using (\ref{F}) to estimate
  \bas
	\bigg\{ \io |\na u|^r\bigg\}^\frac{p}{r} \le \frac{1}{b_1} \cdot\F(t)
	\quad \mbox{and} \quad
	\io |\na v|^p \le \F(t)
	\qquad \mbox{for all } t\in (0,T),
  \eas
  and noting that another application of Lemma \ref{lem2} yields $c_7=c_7(p)>0$ such that
  \bas
	\frac{1}{4} I(t) \ge c_7 \io |\na v|^p
	\qquad \mbox{for all } t\in (0,T),
  \eas
  from (\ref{15.5})-(\ref{15.11}) we altogether infer that
  \bas
	& & \hs{-20mm}
	\F'(t) + \frac{1}{4} I(t) + c_7 \io |\na v|^p
	+ \frac{b_1 a}{2} \cdot\bigg\{ \io |\na u|^r\bigg\}^\frac{p}{r}
	+ b_2 \Gamma_4 \cdot \F_1^\frac{p}{2}(t) \nn\\
	&\le& \Gamma_7\cdot\Big( \frac{1}{b_1 } \F(t)\Big)^\frac{2}{p} \cdot c_1 I(t) 
	+ \frac{c_1}{\Gamma_6} \F^\frac{2}{p}(t) I(t)
	\nn\\
	& & + \Gamma_7 \Gamma_9 \cdot c_4 \F^\frac{2(\lam-1)}{p}(t) I(t)
	+ \Gamma_7 \Gamma_9 \cdot\Big(\frac{1}{b_1} \F(t)\Big)^\frac{2\lam}{p} \cdot c_3 I^\frac{p-2}{p}(t) \nn\\
	& & + \Gamma_7\Gamma_9 \cdot\Big(\frac{1}{b_1 }\F(t)\Big)^{\mu_3}
	+ \Gamma_7 \Gamma_9 \cdot \F^{\mu_3}(t) \nn\\
	& & + b_2 \Gamma_5 \Gamma_8\cdot\Big(\frac{1}{b_1 }\F(t)\Big)^{\mu_1}
	+ b_2 \Gamma_5 \Gamma_8 \cdot\Big(\frac{1}{b_1 }\F(t)\Big)^{\mu_2} \nn\\
	& & + b_2 \Gamma_5 \Gamma_8\cdot\F^{\mu_1}(t)
	+ b_2\Gamma_5 \Gamma_8 \cdot \F^{\mu_2}(t) \nn\\
	& & + b_2 \Gamma_5 c_5\cdot\Big(\frac{1}{b_1 }\F(t)\Big)^\frac{3}{2} 
	+ b_2 \Gamma_5 c_6\cdot\F^\frac{3}{2}(t)
	\qquad \mbox{for all } t\in (0,T).
  \eas
  On abbreviating $c_8\equiv c_8(p,r,a,h,f):=\min\{ c_7,\frac{a}{2},\Gamma_4\}$ and
  $c_9\equiv c_9(p,r,a,h,f):=\Gamma_7 \cdot \big( \frac{1}{b_1 }\big)^\frac{2}{p} c_2
  + \frac{c_1}{\Gamma_6}
  + \Gamma_7 \Gamma_9 \cdot\big\{ c_4 + \big(\frac{1}{b_1 }\big)^\frac{2\lam}{p} c_3 + \big(\frac{1}{b_1 }\big)^{\mu_3} 
  +1 \big\}
  + b_2 \Gamma_5 \Gamma_8 \cdot\big\{ \big(\frac{1}{b_1 }\big)^{\mu_1} + \big(\frac{1}{b_1 }\big)^{\mu_2} +2\big\}
  + b_2 \Gamma_5 \cdot \big\{ c_5\cdot \big(\frac{1}{b_1 }\big)^\frac{3}{2} + c_6\big\}$,
  in view of (\ref{F}) we can turn this into the inequality
  \bea{15.111}
	\hs{-8mm}
	\F'(t) + c_8 \F(t) + \frac{1}{4} I(t)
	&\le& c_9 \F^\frac{2}{p}(t) I(t)
	+ c_9 \F^\frac{2(\lam-1)}{p}(t) I(t)
	+ c_9 \F^\frac{2\lam}{p}(t) I^\frac{p-2}{p}(t) \nn\\
	& & + c_9 \F^{\mu_1}(t)
	+ c_9 \F^{\mu_2}(t) 
	+ c_9 \F^{\mu_3}(t)
	+ c_9 \F^\frac{3}{2}(t)
	\qquad \mbox{for all } t\in (0,T).
  \eea
  Here, several applications of Young's inequality show that
  \bas
	c_9 \F^\frac{2\lam}{p}(t) I^\frac{p-2}{p}(t)
	&=& \Big(\frac{1}{16} I(t)\Big)^\frac{p-2}{p} \cdot 16^\frac{p-2}{p} c_9 \F^\frac{2\lam}{p}(t) \nn\\
	&\le& \frac{1}{16} I(t)
	+ 16^\frac{p-2}{2} c_9^\frac{p}{2} \F^\lam(t)
	\qquad \mbox{for all } t\in (0,T),
  \eas
  and that if we let $\kappa_1\equiv \kappa_1(p,f):=\max\{\frac{2}{p}, \frac{2(\lam-1)}{p}\}$,
  $\kappa_2\equiv \kappa_2(p,h,f):=\max\{\mu_1,\mu_2,\mu_3,\frac{3}{2},\lam\}$ and
  $c_{10}\equiv c_{10}(p,r,a,h,f):=\max\{c_9,16^\frac{p-2}{2} c_9^\frac{p}{2}\}$, then
  \bas
	c_8 \F^\kappa(t)
	&=& \Big(\frac{1}{16}\Big)^\frac{\kappa_1-\kappa}{\kappa_1} \cdot \Big\{ 16^\frac{\kappa_1-\kappa}{\kappa_1} c_9 \F^\kappa(t)
		\Big\} \\
	&\le& \frac{1}{16} + 16^\frac{\kappa_1-\kappa}{\kappa} c_9^\frac{\kappa_1}{\kappa} \F^{\kappa_1}(t)
	\qquad \mbox{for all $t\in (0,T)$ and any } \kappa\in (0,\kappa_1]
  \eas
  as well as
  \bas
	c_{10} \F^\kappa(t)
	&=& \Big(\frac{c_8}{10} \F(t)\Big)^\frac{\kappa_2-\kappa}{\kappa_2-1} \cdot
		\Big(\frac{10}{c_8}\Big)^\frac{\kappa_2-\kappa}{\kappa_2-1} c_{10} \F^\frac{\kappa_2(\kappa-1)}{\kappa_2-1}(t) \\
	&\le& \frac{c_8}{10} \F(t)
	+ \Big(\frac{10}{c_8}\Big)^\frac{\kappa_2-\kappa}{\kappa-1}
		c_{10}^\frac{\kappa_2-1}{\kappa-1} \F^{\kappa_2}(t)
	\qquad \mbox{for all $t\in (0,T)$ and each } \kappa\in (0,\kappa_2].
  \eas
  Writing $c_{11}\equiv c_{11}(p,r,a,h,f):=\max\Big\{ 16^{(\kappa_1-\frac{2}{p})/(\frac{2}{p})} c_9^\frac{p\kappa_1}{2},
  16^{(\kappa_1-\frac{2(\lam-1)}{p})/(\frac{2(\lam-1)}{p})} c_9^\frac{p\kappa_1}{2(\lam-1)} \Big\}$
  as well as
  $c_{12}\equiv c_{12}(p,r,a,h,f):=\max\Big\{ (\frac{10}{c_8})^\frac{\kappa_2-\kappa}{\kappa-1} 
  c_{10}^\frac{\kappa_2-1}{\kappa-1} \ \Big|
  \ \kappa\in \{\mu_1,\mu_2,\mu_3,\frac{3}{2},\lam\}\Big\}$, 
  from (\ref{15.111}) we therefore obtain that
  for all $t\in (0,T)$,
  \bas
	\F'(t) + c_8\F(t) + \frac{1}{4} I(t)
	&\le& 2\cdot \Big\{ \frac{1}{16} + c_{11} \F^{\kappa_1}(t) \Big\} \cdot I(t)
	+ \frac{1}{16} I(t)
	+ 5\cdot\Big\{ \frac{c_8}{10} \F(t) + c_{12} \F^{\kappa_2}(t)\Big\},
  \eas
  that is,
  \bas
	\F'(t) + \frac{c_8}{2} \F(t)
	+ \Big\{ \frac{1}{16} - 2c_{11} \F^{\kappa_1}(t)\Big\} \cdot I(t)
	\le 5 c_{12} \F^{\kappa_2}(t)
	\qquad \mbox{for all } t\in (0,T),
  \eas
  which yields the claim.
\qed
Under smallness assumptions on the initial data in the style of those from Theorem \ref{theo18}, by means of an integration
in (\ref{15.2}) we can make sure that (\ref{uut}) in fact holds throughout evolution, hence closing our loop 
of arguments in the following sense.
\begin{lem}\label{lem16}
  Assume (\ref{h}), (\ref{f0}) and (\ref{f1}) 
  with some $\lam>1$ and $K_f>0$, and suppose that $p\ge 2$ and $r\ge \max\{p,2\lam\}$ are such that $p>n$, and that
  (\ref{15.1}), (\ref{15.01}) and (\ref{15.02}) hold.
  Then there exist $\eta=\eta(p,r,a,h,f)>0$, $\beta=\beta(p,r,a,h,f)>0$
  and $\Gamma_{12}=\Gamma_{12}(p,r,a,h,f)>0$ such that if (\ref{init}) is valid with
  \be{16.1}
	\io |\na u_0|^r \le \eta
	\qquad \mbox{and} \qquad
	\io |\na u_{0t}|^p \le \eta,
  \ee
  then
  \be{16.2}
	|u| + |u_t| \le \del_1
	\qquad \mbox{in } \Om\times (0,\tm)
  \ee
  and
  \be{16.3}
	\io |\na u(\cdot,t)|^r
	+ \io |\na u_t(\cdot,t)|^p
	\le \Gamma_{12} e^{-\beta t}
	\qquad \mbox{for all } t\in (0,\tm),
  \ee
  where $\del_1=\del_1(h)$ is as in Lemma \ref{lem11}.
\end{lem}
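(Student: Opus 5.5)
Our plan is a continuity (bootstrap) argument based on the conditional differential inequality (\ref{15.2}) from Lemma \ref{lem15}. First we relate $\F$ to the quantities in (\ref{16.1}) and (\ref{16.3}). Since $v=u_t+au$, we have $\io|\na v|^p\le 2^{p-1}\io|\na u_t|^p+2^{p-1}a^p\io|\na u|^p$, and $\io|\na u|^p\le |\Om|^{\frac{r-p}{r}}\{\io|\na u|^r\}^{\frac pr}$. We bound $\F_1$ from above by (\ref{13.10}), in the form $\F_1\le c(\io|\na u|^2+\io|\na u_t|^2)$, together with Hölder's inequality; this step only needs (\ref{uut}) at $t=0$.

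At $t=0$ we use $p>n$ and $r\ge p$ to get $\|u_0\|_{L^\infty}+\|u_{0t}\|_{L^\infty}\le c(\eta^{1/r}+\eta^{1/p})$. For $\eta$ small, this gives $|u_0|+|u_{0t}|<\del_1$, with a margin. We then deduce $\F(0)\le c_1\eta^{\min\{p/r,1\}}=c_1\eta^{p/r}$, which is as small as we like.

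Conversely, we need $L^\infty$ control of $u,u_t$ by $\F$. We have $\|\na u\|_{L^r}\le (b_1^{-1}\F)^{1/p}$, so $\|u\|_{L^\infty}\le c\F^{1/p}$ by the embedding (\ref{133.2}). Also $\|\na v\|_{L^p}\le\F^{1/p}$, so $\|v\|_{L^\infty}\le c\F^{1/p}$. Hence $|u|+|u_t|\le |u|(1+a)+|v|\le c_2\F^{1/p}$. Likewise $\io|\na u|^r+\io|\na u_t|^p\le c_3(\F^{r/p}+\F)$.

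Next we fix a threshold $\eps_0>0$ so small that three conditions hold: $\Gamma_{11}\eps_0^{\kappa_1}\le\Gamma_{10}$, $\Gamma_{11}\eps_0^{\kappa_2-1}\le\Gamma_{10}/2$, and $c_2\eps_0^{1/p}<\del_1$. We choose $\eta$ so that $c_1\eta^{p/r}<\eps_0/2$ and the initial $L^\infty$ smallness above holds.

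We define $T_*:=\sup\{T\in(0,\tm): |u|+|u_t|\le\del_1 \mbox{ in } \Om\times(0,T)\}$. This is positive by continuity of $u,u_t$ in $C^0([0,\tm);C^1(\bom))$, because the initial inequality is strict. On $(0,T_*)$, Lemma \ref{lem15} applies.

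We claim $\F<\eps_0$ on $[0,T_*)$. If not, let $t_0$ be the first time with $\F(t_0)=\eps_0$. On $(0,t_0)$ the $I$-term in (\ref{15.2}) is nonnegative and can be dropped, leaving $\F'\le-\Gamma_{10}\F+\Gamma_{11}\F^{\kappa_2}\le-\frac{\Gamma_{10}}2\F$. This gives $\F(t_0)\le\F(0)<\eps_0/2$, a contradiction.

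So on $(0,T_*)$ we have $\F(t)\le\F(0)e^{-\Gamma_{10}t/2}$, and $|u|+|u_t|\le c_2\F^{1/p}\le c_2(\eps_0/2)^{1/p}<\del_1$. If $T_*<\tm$, continuity would extend the strict bound beyond $T_*$, contradicting maximality. Hence $T_*=\tm$, which proves (\ref{16.2}). Then (\ref{16.3}) follows from $\F\le\eps_0 e^{-\Gamma_{10}t/2}$ and the bound of the integrals by $c_3(\F^{r/p}+\F)$. We take $\beta=\Gamma_{10}/2$, since $r/p\ge1$ and $\F\le\eps_0$.

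We expect the main obstacle to be the bookkeeping at time $t=0$. Lemma \ref{lem15} holds on $(0,T)$, and $\F$ must be continuous on $[0,\tm)$. Continuity follows from the regularity class $u,u_t\in C^0([0,\tm);C^1(\bom))$, which makes $\io|\na v|^p$, $\io|\na u|^r$ and $\F_1$ continuous up to $t=0$. The differential inequality can then be integrated on $[0,t]$. We also need the strict initial bound $|u_0|+|u_{0t}|<\del_1$ so that $T_*>0$, which is exactly what the choice of $\eta$ guarantees.
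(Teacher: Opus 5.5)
Your proposal is correct and follows essentially the same route as the paper: a continuity argument built on (\ref{15.2}), with a smallness threshold for $\F$ chosen as in (\ref{16.5})--(\ref{16.7}), $\eta$ chosen so that $\F(0)$ is at most half of it, and the embeddings of $W_0^{1,r}(\Om)$ and $W_0^{1,p}(\Om)$ into $L^\infty(\Om)$ used to recover (\ref{uut}). The only difference is that you define the maximal time through (\ref{uut}) itself rather than through $\F\le\eta_1$; this has the minor advantage of guaranteeing $\F_1\ge 0$ (so that $\F$ genuinely dominates $\{\io|\na u|^r\}^{p/r}$ and $\io|\na v|^p$), of justifying the use of (\ref{13.1}) at $t=0$, and of converting the decay of $\{\io|\na u|^r\}^{p/r}$ into decay of $\io|\na u|^r$, all small points the paper leaves implicit.
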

\proof
  We let $b_1=b_1(p,r,a,h,f), b_2=b_2(p,r,a,h,f), \kappa_1=\kappa_1(p,f), \kappa_2=\kappa_2(p,h,f)>1, 
  \Gamma_{10}=\Gamma_{10}(p,r,a,h,f)$ and $\Gamma_{11}=\Gamma_{11}(p,r,a,h,f)$ 
  be as in Lemma \ref{lem15}, and take $\Gamma_3=\Gamma_3(a,h)$ from Lemma \ref{lem13}.
  Using that $\min\{p,r\}>n$, we then choose $c_1=c_1(r)>0$ and $c_2=c_2(p)>0$ such that
  \be{16.4}
	\|\vp\|_{L^\infty(\Om)} \le c_1 \|\na\vp\|_{L^r(\Om)}
	\quad \mbox{and} \quad
	\|\vp\|_{L^\infty(\Om)} \le c_2 \|\na\vp\|_{L^p(\Om)}
	\quad \mbox{for all } \vp \in W_0^{1,r}(\Om),
  \ee
  and pick $\eta_1=\eta_1(p,r,a,h,f)>0$ small enough fulfilling
  \be{16.5}
	(1+a) c_1 \cdot\Big(\frac{\eta_1}{b_1}\Big)^\frac{1}{p} + c_2\eta_1^\frac{1}{p} \le \del_1
  \ee
  and
  \be{16.6}
	\Gamma_{10} - \Gamma_{11} \eta_1^{\kappa_1} \ge 0
  \ee
  as well as
  \be{16.7}
	\frac{\Gamma_{11}}{\Gamma_{10}} \cdot \eta_1^{\kappa_2-1} \le \frac{1}{2}.
  \ee
  We thereupon fix $\eta=\eta(p,r,a,h,f)>0$ in such a way that
  \be{16.8}
	2^p \eta + 2^p a^p |\Om|^\frac{r-p}{r} \eta^\frac{p}{r} + b_1 \eta^\frac{p}{r} 
	+ 2^\frac{p}{2} b_2 \Gamma_3^\frac{p}{2} |\Om|^\frac{p(r-2)}{2r} \eta^\frac{p}{r}
	+ 2^\frac{p}{2} b_2 \Gamma_3^\frac{p}{2} |\Om|^\frac{p-2}{2} \eta 
	\le \frac{\eta_1}{2},
  \ee
  and henceforth assume $u_0$ and $u_{0t}$ to comply with (\ref{init}) and to satisfy (\ref{16.1}).
  We then let $\F$ be as defined in (\ref{F}), and note that $\F$ is continuous on $[0,\tm)$ and has the property that
  thanks to (\ref{v}), the H\"older inequality, (\ref{13.1}), (\ref{16.1}) and (\ref{16.8}),
  \bea{16.9}
	 \F(0)
	&\le&\io |\na u_{0t} + a\na u_0|^p
	+ b_1 \cdot \bigg\{ \io |\na u_0|^r\bigg\}^\frac{p}{r} 
	+ b_2\cdot\bigg\{ \Gamma_3 \io |\na u_0|^2 + \Gamma_3 \io |\na u_{0t}|^2 \bigg\}^\frac{p}{2} \nn\\
	&\le& 2^p \io |\na u_{0t}|^p
	+ 2^p a^p \io |\na u_0|^p
	+ b_1 \cdot\bigg\{ \io |\na u_0|^r \bigg\}^\frac{p}{r} \nn\\
	& & + 2^\frac{p}{2} b_2 \Gamma_3^\frac{p}{2} \cdot\bigg\{ \io |\na u_0|^2 \bigg\}^\frac{p}{2}
	+ 2^p b_2 \Gamma_3^\frac{p}{2}\cdot\bigg\{ \io |\na u_{0t}|^2\bigg\}^\frac{p}{2} \nn\\
	&\le& 2^p \io |\na u_{0t}|^p
	+ 2^p a^p |\Om|^\frac{r-p}{r} \cdot\bigg\{ \io |\na u_0|^r\bigg\}^\frac{p}{r}
	+ b_1\cdot\bigg\{ \io |\na u_0|^r\bigg\}^\frac{p}{r} \nn\\
	& & + 2^\frac{p}{2} b_2 \Gamma_3^\frac{p}{2} |\Om|^\frac{p(r-2)}{2r} \cdot\bigg\{ \io |\na u_0|^r\bigg\}^\frac{p}{r}
	+ 2^\frac{p}{2} \Gamma_3^\frac{p}{2} |\Om|^\frac{p-2}{2} \io |\na u_{0t}|^p \nn\\
	&\le& 2^p \eta + 2^p a^p |\Om|^\frac{r-p}{r} \eta^\frac{p}{r} + b_1 \eta^\frac{p}{r} \nn\\
	& & + 2^\frac{p}{2} b_2 \Gamma_3^\frac{p}{2} |\Om|^\frac{p(r-2)}{2r} \eta^\frac{p}{r}
	+ 2^\frac{p}{2} b_2 \Gamma_3^\frac{p}{2} |\Om|^\frac{p-2}{2} \eta \nn\\
	&\le& \frac{\eta_1}{2},
  \eea
  whence, in particular,
  \bas
	T:=\sup \Big\{ T'\in (0,\tm) \ \Big| \ \F(t) \le\eta_1 \mbox{ for all } t\in (0,T')\Big\}
  \eas
  is a well-defined element of $(0,\tm]\subset (0,\infty]$.
  To see that actually $T=\tm$, and to simultaneosuly derive (\ref{16.3}), we note that in view of (\ref{v}), (\ref{16.4}) 
  and (\ref{16.5}), the definition of $T$ especially implies that
  \bea{16.12}
	\|u\|_{L^\infty(\Om)}
	+ \|u_t\|_{L^\infty(\Om)}
	&=& \|u\|_{L^\infty(\Om)}
	+ \|v-au\|_{L^\infty(\Om)} \nn\\
	&\le& (1+a)\|u\|_{L^\infty(\Om)} 
	+ \|v\|_{L^\infty(\Om)} \nn\\
	&\le& (1+a)c_1 \|\na u\|_{L^r(\Om)}
	+ c_2\|\na v\|_{L^p(\Om)} \nn\\
	&\le& (1+a) c_1 \cdot\Big( \frac{\F(t)}{b_1}\Big)^\frac{1}{p}
	+ c_2 \F^\frac{1}{p}(t) \nn\\
	&\le& (1+a)c_1\cdot\Big(\frac{\eta_1}{b_1}\Big)^\frac{1}{p}
	+ c_2 \eta_1^\frac{1}{p} \nn\\
	&\le& \del_1
	\qquad \mbox{for all } t\in (0,T),
  \eea
  and that moreover, in line with (\ref{16.6}),
  \bas
	\Gamma_{10} - \Gamma_{11} \F^{\kappa_1}(t) \ge \Gamma_{10} - \Gamma_{11} \eta_1^{\kappa_1} \ge 0
	\qquad \mbox{for all } t\in (0,T).
  \eas
  Therefore, Lemma \ref{lem15} applies so as to warrant that
  \bas
	\F'(t) + \Gamma_{10} \F(t) \le \Gamma_{11} \F^{\kappa_2}(t)
	\qquad \mbox{for all } t\in (0,T),
  \eas
  where according to (\ref{16.7})
  \bas
	\frac{\Gamma_{11} \F^{\kappa_2}(t)}{\Gamma_{10} \F(t)}
	\le \frac{\Gamma_{11}}{\Gamma_{10}} \cdot \eta_1^{\kappa_2-1}
	\le \frac{1}{2}
	\qquad \mbox{for all } t\in (0,T),
  \eas
  because $\kappa_2>1$.
  Consequently,
  \bas
	\F'(t) + \frac{1}{2} \Gamma_{10} \F(t) \le 0
	\qquad \mbox{for all } t\in (0,T),
  \eas
  so that by (\ref{16.9}),
  \bas
	\F(t) \le \frac{\eta_1}{2} e^{-\frac{1}{2}\Gamma_{10} t}
	\qquad \mbox{for all } t\in (0,T).
  \eas
  Again by continuity of $\F$, this means that necessarily, indeed, $T=\tm$, and that, once more due to (\ref{v})
  and the H\"older inequality,
  \bas
	\bigg\{ \io |\na u|^r \bigg\}^\frac{p}{r} + \io |\na u_t|^p
	&=& \bigg\{ \io |\na u|^r \bigg\}^\frac{p}{r}
	+ \io |\na v-a\na u|^p \\
	&\le& \bigg\{ \io |\na u|^r \bigg\}^\frac{p}{r}
	+ 2^p \io |\na v|^p + 2^p a^p |\Om|^\frac{r-p}{r} \cdot \bigg\{ \io |\na u|^r\bigg\}^\frac{p}{r} \\
	&\le& \big( 1+2^p a^p |\Om|^\frac{r-p}{r}\big) \cdot \frac{\F(t)}{b_1}
	+ 2^p \F(t) \\
	&\le& \Big\{ \frac{1+2^p a^p |\Om|^\frac{r-p}{r}}{b_1} + 2^p\Big\} \cdot \frac{\eta_1}{2} e^{-\frac{1}{2}\Gamma_{10} t}
	\qquad \mbox{for all } t\in (0,\tm).
  \eas
  With $\beta\equiv \beta(p,r,a,h,f):=\frac{1}{2} \Gamma_{10}$ and some $\Gamma_{12}=\Gamma_{12}(p,r,a,h,f)$ independent of the
  particular choice of $u_0$ and $u_{0t}$ this establishes (\ref{16.3}).
\qed
In light of the refined extensibility criterion asserted by Lemma \ref{lem17}, we have thus actually achieved our goal already:\abs
\proofc of Theorem \ref{theo18}. \quad
  With $\eta=\eta(p,r,a,h,f)$ taken from Lemma \ref{lem16}, assuming (\ref{init}) and (\ref{18.1}) we infer from said lemma
  that the classical solution $u$ of (\ref{0}) from Lemma \ref{lem_loc} satisfies (\ref{16.2}) and (\ref{16.3}).
  According to Lemma \ref{lem11}, (\ref{16.2}) entails that $\inf_{\Om\times (0,\tm)} h(u,u_t)>0$, and since $r\ge p$,
  from (\ref{16.3}) we particularly infer that both $u$ and $u_t$ lie in $L^\infty((0,\tm);W^{1,p}(\Om))$.
  In view of the inequalities in (\ref{18.1}), Lemma \ref{lem17} thus asserts that $\tm=\infty$, whereupon (\ref{18.2}) 
  results from (\ref{16.3}).
\qed

\bigskip

\section*{Declarations}
{\bf Funding.} \quad
The second author acknowledges support of the Deutsche Forschungsgemeinschaft (Project No. 444955436).\abs
{\bf Conflict of interest statement.} \quad
The authors declare that they have no conflict of interest.\abs
{\bf Data availability statement.} \quad
Data sharing is not applicable to this article as no datasets were
generated or analyzed during the current study.

\end{document}